\theoremstyle{plain}
\newtheorem{theorem}{Theorem}[section]
\newtheorem{proposition}[theorem]{Proposition}
\newtheorem{lemma}[theorem]{Lemma}
\newtheorem{corollary}[theorem]{Corollary}
\theoremstyle{definition}
\newtheorem{definition}[theorem]{Definition}
\newtheorem{example}[theorem]{Example}
\theoremstyle{remark}
\newcommand{\N}{\mathbb{N}}
\newcommand{\Z}{\mathbb{Z}}
\newcommand{\R}{\mathbb{R}}
\newcommand{\C}{\mathbb{C}}
\newcommand{\e}{\varepsilon}
\newcommand{\T}{\mathbb{T}}
\newcommand{\Zj}{{\cal Z}}
\newcommand{\Zjk}{{\cal Z}_{k}}
\newcommand{\Ot}{{\cal O}_2}
\newcommand{\im}{\sqrt{ -1}}
\newcommand{\tb}{\tilde{b}}
\newcommand{\te}{\tilde{e}}
\newcommand{\tf}{\tilde{f}}
\newcommand{\tc}{\tilde{c}}
\newcommand{\ts}{\tilde{s}}
\newcommand{\tT}{\widetilde{T}}
\newcommand{\tU}{\widetilde{U}}
\newcommand{\tsigma}{\widetilde{\sigma}}
\newcommand{\wtDelta}{\widetilde{\Delta}}
\newcommand{\wtu}{\widetilde{u}}
\newcommand{\wtv}{\widetilde{v}}
\newcommand{\wtz}{\widetilde{z}}
\newcommand{\wtU}{\widetilde{U}}
\newcommand{\tV}{\widetilde{V}}
\newcommand{\tW}{\widetilde{W}}
\newcommand{\cH}{{\cal H}}
\newcommand{\cR}{{\cal R}}
\newcommand{\od}{\overline{d}}
\newcommand{\ove}{\overline{e}}
\newcommand{\of}{\overline{f}}
\newcommand{\ov}{\overline{v}}
\newcommand{\ps}{{s'}}
\newcommand{\pv}{{v'}}
\newcommand{\bc}{\bar{c}}
\newcommand{\bs}{\bar{s}}
\newcommand{\bb}{\bar{b}}
\DeclareMathOperator{\id}{id}
\DeclareMathOperator{\Aut}{Aut}
\DeclareMathOperator{\Ad}{Ad}
\DeclareMathOperator{\spec}{sp}
\DeclareMathOperator{\Lip}{Lip}
\DeclareMathOperator{\Tr}{Tr}
\newcommand{\bprf}{\noindent{\it Proof.\ }}
\newcommand{\eprf}{\hspace*{\fill} \rule{1.6mm}{3.2mm} \vspace{1.6mm}}
\newcommand{\benu}{\begin{enumerate}\renewcommand{\labelenumi}{{\rm (\roman{enumi})}}\renewcommand{\itemsep}{0pt}}
\newcommand{\eenu}{\end{enumerate}}
\begin{document}
\title{The Rohlin property for automorphisms of the Jiang-Su algebra.}
\author{Yasuhiko Sato \\}
\date{\small Department of Mathematics, Hokkaido University,\\ Sapporo 060-0810, Japan\\e-mail : s053018@math.sci.hokudai.ac.jp}

\maketitle

\begin{abstract}   
For projectionless $C^*$-algebras absorbing the Jiang-Su algebra tensorially, we study a kind of the Rohlin property for autmorphisms. We show that the crossed products obtained by automorphisms with this Rohlin property also absorb the Jiang-Su algebra tensorially under a mild technical condition on the $C^*$-algebras. 

In particular, for the Jiang-Su algebra we show the uniqueness up to outer conjugacy of the automorphism with this Rohlin property. 

Keywords: $C^*$-algebra, automorphism, Jiang-Su algebra,
Rohlin property.

Mathematics Subject Classification 2000: Primary 46L40; Secondary 46L35, 46L80. 
\end{abstract}

\section{Introduction}\label{INTRO}
In the classification program established by Elliott, the Jiang-Su algebra $\Zj$ is one of the most important $C^*$-algebras,  see  \cite{ET}, \cite{JS}, and which have been investigated by many people, \cite{DPT}, \cite{DW}, \cite{GJS}, \cite{Ror}, \cite{TW}.
Toms and Winter proved that for all approximately divisible $C^*$-algebras absorb the Jiang-Su algebra tensorially, i.e., $A \cong A\otimes \Zj$, \cite{TW}. R$\o$rdam showed that the Cuntz semigroup of a $\Zj$-absorbing  $C^*$-algebra is almost unperforated  \cite{Ror}. Recently, Winter has shown some criteria for the absorption of the Jiang-Su algebra  \cite{W2}. For abstract characterization of the Jiang-Su algebra in a streamlined way,  we refer to the recent papers by Dadarlat, R$\o$rdam, Toms, and Winter, \cite{DW}, \cite{RW}, \cite{W2}. H. Lin has shown the classification theorem for a large class of $C^*$-algebras consisting of generalized dimension drop algebras when they absorb the Jiang-Su algebra tensorially. 

In the case of von Neumann algebras Connes defined the Rohlin property for automorphisms, using a partition of unities consisting of projections, and classified automorphisms of injective type $I\hspace{-.1em}I_1$ factor up to outer conjugacy \cite{Con}. Kishimoto 
gave a method to prove the Rohlin property for automorphisms of AF-algebras for classifying automorphisms up to outer conjugacy,  based on the Elliott's classification program, \cite{EK}, \cite{Kis1}, \cite{Kis2}.  For Kirchberg algebras, Nakamura completely classified autmorphisms with the Rohlin property by their KK-classes up to outer conjugacy \cite{Nak}. Recently, Matui has classified automorphisms of AH-algebras with real rank zero and slow dimension growth up to outer conjugacy \cite{Mat2}. For finite actions, Izumi defined the Rohlin property and has shown the classification theory \cite{Iz2}, \cite{Iz3}. Recently, Izumi, Katsura, and Matui showed classification results for $\Z^2$-actions with the Rohlin property \cite{KM}, \cite{Mat}, \cite{IM}.

The aim of this paper is to introduce a kind of the Rohlin property for automorphisms of projectionless $C^*$-algebras and to give the following two main theorems as follows.

\begin{definition}\label{Defr}
Let $A$ be a unital $C^*$-algebra which has a unique tracial state $\tau$ and absorbs the Jiang-Su algebra $\Zj$ tensorially, and $\alpha$ be an automorphism of $A$. 
We say that $\alpha$ has {\it the\ weak\ Rohlin\  property}, if for any $k \in \N$ there exist positive elements $f_n \in A_+^1$ such that $(f_n)_n \in A_{\infty}$, $n\in \N$ 
\begin{eqnarray}
& &(\alpha^j(f_n))\cdot (f_n) =0, \quad j=1,2,...,k-1, \nonumber \\
& & \tau (1- \sum_{j=0}^{k-1} \alpha^j (f_n)) \rightarrow 0. \nonumber
\end{eqnarray}
\end{definition}
Here, we denote by $A^{\infty}$ the quotient $\ell^{\infty}(\N,A)/c_0(A)$, and $A_{\infty}$ the central sequence algebra $A^{\infty} \cap A'$. 

We extend a technical condition called property (SI) to projectionless $C^*$-algebras in Definition \ref{SI}. Roughly speaking, property (SI) means that if two central sequence of positive elements are given such that one of them is infinitesimally small compared to the other in the sequence algebra, then in fact so in the central sequence algebra.

\begin{theorem}\label{Thm1}
Let $A$ be a unital separable projectionless $C^*$-algebra which has a unique tracial state and absorbs the Jiang-Su algebra tensorially. Suppose that $A$ has property (SI) and  $\alpha $ is an automorphism of $A$ with the weak Rohlin property.
Then $A \times_{\alpha} \Z$ also absorbs the Jiang-Su algebra tensorially. 
\end{theorem}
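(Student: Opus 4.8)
The plan is to reduce the statement to an abstract $\Zj$-absorption criterion of Matui--Sato type, namely: a unital separable $C^{*}$-algebra $B$ with a unique tracial state and property (SI) absorbs $\Zj$ tensorially provided that for every $k\in\N$ there is a completely positive contractive order zero map $M_{k}\to B_{\infty}$ (with $B_{\infty}=B^{\infty}\cap B'$, and {\it not} assuming $\varphi(1)=1$). Writing $B:=A\times_{\alpha}\Z$, $u\in B$ for the implementing unitary ($uau^{*}=\alpha(a)$) and $E\colon B\to A$ for the canonical conditional expectation, I would verify, in order: (1) $B$ has a unique tracial state; (2) $B$ has property (SI); (3) the required order zero maps exist.

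For (1): since $\tau$ is the unique trace of $A$ it is $\alpha$-invariant, so $\tau\circ E$ is a trace on $B$. If $\sigma$ is any trace on $B$, then $\sigma|_{A}$ is an $\alpha$-invariant trace, hence $\sigma|_{A}=\tau$, and it remains to see $\sigma(au^{n})=0$ for $a\in A$, $n\neq 0$. Fix such $n$, choose $k>|n|$, and take the weak Rohlin elements $f_{m}$ of Definition \ref{Defr} for this $k$, with $g_{m}=f_{m}^{1/2}$. Using $u^{j}f_{m}u^{-j}=\alpha^{j}(f_{m})$, the trace property, asymptotic centrality of $(f_{m})_{m}$ in $A$, and the orthogonality $\alpha^{i}(f_{m})\alpha^{j}(f_{m})=0$ for distinct $i,j\in\{0,\dots,k-1\}$ (which gives $g_{m}\alpha^{\ell}(g_{m})\to 0$ for $1\le|\ell|\le k-1$), one gets $\sigma(\alpha^{j}(f_{m})au^{n})\to 0$ for each $j$; since $1-\sum_{j=0}^{k-1}\alpha^{j}(f_{m})\ge 0$ has trace tending to $0$, the Cauchy--Schwarz inequality for $\sigma$ gives $\sigma((1-\sum_{j}\alpha^{j}(f_{m}))au^{n})\to 0$, and adding these yields $\sigma(au^{n})=0$. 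Hence $\sigma=\tau\circ E$.

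Part (3) is the technical heart, and part (2) runs along similar lines. Starting from $A\cong A\otimes\Zj$, one has for each $k$ a completely positive contractive order zero copy of $M_{k}$ inside $A_{\infty}$; the problem is that such central sequences of $A$ need not be fixed by the induced automorphism $\alpha_{\infty}$ of $A_{\infty}$, so they need not commute with $u$ in $B_{\infty}$, while the naive matrix-unit candidates $u^{i-j}f^{(m)}_{j}$ built from a Rohlin tower do give an order zero copy of $M_{k}$ in $B^{\infty}$ but likewise fail to commute with $u$. The resolution is to average/conjugate the $\Zj$-coming matrix units along a Rohlin tower of length $R\gg k$; the commutators of the resulting elements with $A$ and with $u$ are then controlled by the tower's ``boundary'', i.e.\ by $\alpha^{R}(f_{m})-f_{m}$ and by the defect $1-\sum_{j=0}^{R-1}\alpha^{j}(f_{m})$. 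Here the trace estimate in Definition \ref{Defr} says precisely that this defect is small in trace, and property (SI) is what upgrades ``small in trace'' to ``negligible in the central sequence algebra'', allowing one to correct the construction so that it lands genuinely in $B_{\infty}$. The same mechanism --- push a pair of central sequences of $B$ down to $A$ via $E$ and the Rohlin tower, apply property (SI) of $A$, transport back --- yields property (SI) of $B$ in (2).

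I expect the main obstacle to be exactly this interaction in (3): neither the Rohlin structure of $\alpha$ alone nor the $\Zj$-structure of $A$ alone suffices, because the Rohlin tower does not close up and the central sequences of $A$ are not $\alpha$-invariant, so the bookkeeping needed to combine them --- keeping the order zero relations intact while forcing everything to be central in $B$ and absorbing the trace-small error via property (SI) --- is where essentially all of the work lies. Parts (1) and (2), and the passage from the order zero maps to $\Zj$-absorption once the criterion is in hand, should be comparatively routine.
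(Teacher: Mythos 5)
Your overall strategy --- Rohlin-average an order zero copy of $M_k$ coming from the $\Zj$-factorization of $A$, control the error by the tower boundary, and use property (SI) to absorb the trace-small defect --- is in spirit what the paper does in Lemma \ref{Lem4.1} and Proposition \ref{Prop4.2}. But the reduction you set up has two genuine gaps. First, the absorption criterion you invoke is false as stated: the mere existence of a c.p.c.\ order zero map $M_k\to B_{\infty}$, non-unital and with no condition on $\varphi(1)$, gives nothing (the zero map is order zero, and nonzero such maps exist in the central sequence algebras of many non-$\Zj$-stable algebras). What is needed is that $1-\varphi(1)$ be uniformly small in trace, since that is exactly the hypothesis under which property (SI) produces the element $s$ with $s^*s=1-\varphi(1)$ and $\varphi(e_{11})s=s$ completing a set of generators satisfying $\cR_k$; securing this trace condition is precisely what the trace clause of the weak Rohlin property is for. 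Moreover, even the corrected criterion is not an off-the-shelf black box at the level of this paper: it is essentially the content of Proposition \ref{Prop1}, Corollary \ref{Cor2.1}, Lemma \ref{Lem4.1} and Proposition \ref{Prop4.2}, so invoking it leaves most of the actual work unaccounted for.

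Second, and more seriously, your step (2) --- property (SI) for $B=A\times_{\alpha}\Z$ --- is a substantial claim that your sketch does not deliver. Pushing central sequences of $B$ down to $A$ via $E$ does not preserve the relations $s_n^*s_n=e_n$ and $f_ns_n=s_n$ (the conditional expectation is not multiplicative), a general central sequence of $B$ need not be close to $A$, and Rohlin-averaging a candidate $s_n$ to force commutation with $u$ destroys $s_n^*s_n$ through cross terms unless the translates of the support of $s_n$ along the tower are orthogonal. The paper avoids the issue entirely: it proves neither (SI) for the crossed product nor uniqueness of its trace (so your step (1), though correct, is not needed). Instead it constructs the generators of $I(k,k+1)$ inside $(A_{\infty})_{\alpha_{\infty}}$, which maps into $(A\times_{\alpha}\Z)_{\infty}$, using only (SI) of $A$, which is a hypothesis. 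Concretely, Proposition \ref{Prop4.2} applies (SI) of $A$ to $e_n=1-\sum_j {c_n^{(j)}}^*c_n^{(j)}$ and $g_n^{(l)}={c_n^{(1)}}^{1/2}f_n^{(l)}{c_n^{(1)}}^{1/2}$ to get $s_n'\in A$ first, and only then averages $s_n=L^{-1/2}\sum_{p=0}^{L-1}\alpha^p(s_n')$; the relation $f_n^{(l)}s_n'=s_n'$ together with $\alpha^p(f_n^{(l)})f_n^{(l)}\approx 0$ kills the cross terms so that $s_n^*s_n\approx e_n$ survives the averaging. If you reorganize your argument around this order of operations --- (SI) in $A$ first, Rohlin averaging second --- and replace your criterion by the Toms--Winter characterization (a unital embedding $I(k,k+1)\hookrightarrow B_{\infty}$ already implies $\Zj$-absorption, with no trace or (SI) hypotheses on $B$), the proof closes.
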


\begin{theorem}\label{Thm2}
Suppose that $\alpha$ and $\beta$ are automorphisms of the Jiang-Su algebra with the weak Rohlin property. Then $\alpha$ and $\beta$ are outer conjugate, i.e.,
 there exists an automorphism $\delta$ of $\Zj$ and a unitary $u$ in $\Zj$
such that 
\[\alpha = \Ad u \circ \delta \circ \beta \circ \delta^{-1} .\]
\end{theorem}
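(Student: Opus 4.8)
The strategy is the standard Evans--Kishimoto-type intertwining argument, adapted to the projectionless setting using the weak Rohlin property in place of the classical Rohlin tower. The plan is to build, by an inductive back-and-forth construction, sequences of unitaries $(u_n)$ and $(v_n)$ in $\Zj$ together with approximate intertwiners so that the automorphisms $\Ad(u_n\cdots u_1)\circ\alpha\circ\Ad(u_1^*\cdots u_n^*)$ and $\Ad(v_n\cdots v_1)\circ\beta\circ\Ad(v_1^*\cdots v_n^*)$ converge, on a fixed dense sequence of elements, to a common limit automorphism; passing to the limit then yields the conjugating automorphism $\delta$ together with the unitary $u$. The key technical input at each stage of the intertwining is a \emph{cocycle-type} approximation lemma: given a finite set $F\subset\Zj$, a tolerance $\e>0$, and an approximately inner perturbation one wants to realize, one must produce a unitary $w\in\Zj$ that, up to $\e$ on $F$, conjugates one of the automorphisms into the desired form. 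This is exactly where the weak Rohlin property enters: the positive elements $f_n$ with $(\alpha^j(f_n))(f_n)=0$ for $1\le j\le k-1$ and $\tau(1-\sum_{j=0}^{k-1}\alpha^j(f_n))\to 0$ play the role of a Rohlin partition of unity, and one uses them to transport, along the ``tower,'' a path of unitaries in $\Zj\otimes\Zj$ (or in a suitable matrix amplification) that implements the required approximate unitary equivalence. The fact that $\Zj\cong\Zj\otimes\Zj$ is projectionless means we cannot cut by projections; instead the functional-calculus cutoffs by the $f_n$ must be glued together continuously, and the uniqueness of the trace is used to control the error in trace norm, which by $\Zj$-absorption and property (SI) (Theorem \ref{Thm1} machinery, or rather the underlying approximation results) can be upgraded to a norm estimate on central sequences.

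Concretely, I would first reduce to showing that any automorphism $\alpha$ with the weak Rohlin property is approximately conjugate to a fixed model automorphism, and then that approximate conjugacy can be improved to outer conjugacy via the intertwining. For the model, a natural choice is a shift-type or ``infinite tensor product'' automorphism of $\Zj\cong\bigotimes\Zj$ that manifestly has the weak Rohlin property; alternatively one can run the intertwining symmetrically between $\alpha$ and $\beta$ directly without isolating a model. At each inductive step, having arranged agreement of the two (conjugated) automorphisms to within $\e_n$ on the first $n$ elements of the dense sequence, one invokes the approximation lemma to choose the next unitary so that the \emph{other} automorphism catches up, while keeping the newly introduced unitary close to $1$ in a way that makes the infinite products $\delta = \lim \Ad(u_n\cdots u_1)$ converge; summability $\sum\e_n<\infty$ of the tolerances guarantees both convergence and that the limit is a genuine automorphism rather than merely an endomorphism. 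The weak Rohlin property is used in a \emph{relative} form on central sequences: one works in $\Zj_\infty = \Zj^\infty\cap\Zj'$, produces the Rohlin elements there, and then perturbs the cocycle within the relative commutant so that the perturbation is invisible on $F$ up to $\e_n$.

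The main obstacle I anticipate is the construction of the path of unitaries connecting the two cocycles across the Rohlin tower \emph{without projections}. In the AF or Kirchberg-algebra arguments one cuts by Rohlin projections $e_j$ and defines the conjugating unitary blockwise as $\sum_j e_j w_j e_j + (\text{remainder})$, with the blocks essentially independent; here the $f_n$ overlap in a controlled but nontrivial way on their ``boundaries,'' and the conjugating unitary must be a continuous interpolation $w_n = \sum_j \alpha^j(f_n)^{1/2}\,w\,\alpha^j(f_n)^{1/2} + \cdots$ whose unitarity is only approximate and must be corrected. Managing these boundary/overlap errors, and simultaneously ensuring the resulting unitary still lies in (a matrix amplification of) $\Zj$ and is connected to $1$ through a short path so the telescoping products converge, is the delicate point. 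A secondary difficulty is establishing that the relevant approximate unitary equivalences in $\Zj$ can be realized at all --- this ultimately rests on the structure of $\Zj$ (its unique trace, the triviality of its $K$-theory, and $\Zj\otimes\Zj\cong\Zj$) together with property (SI), which is what lets trace-norm smallness of a central sequence be promoted to norm smallness; I would expect to isolate this as a preliminary lemma before running the intertwining. Once the approximation lemma is in hand, the Evans--Kishimoto bookkeeping is routine and I would not belabor it.
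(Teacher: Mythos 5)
Your outer framework coincides with the paper's: Theorem \ref{Thm2} is indeed obtained by the Evans--Kishimoto intertwining argument of \cite{Kis2}, run symmetrically between $\alpha$ and $\beta$ and fed by a stability lemma of exactly the shape you describe (for every finite $F\subset\Zj^1$ and $\e>0$ there are $G$, $\delta$ such that any $u\in U(\Zj)$ that $\delta$-commutes with $G$ admits $v\in U(\Zj)$, $\e$-commuting with $F$, and $\lambda\in\T$ with $\|v\sigma(v)^*-\lambda u\|<\e$). The first genuine gap is the scalar $\lambda$: you write that the needed approximate unitary equivalences rest on ``the triviality of its $K$-theory,'' but $K_0(\Zj)=\Z$ and the de la Harpe--Skandalis determinant $\Delta_{\tau_{\Zj}}$ takes values in $\R/\Z$. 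An approximately central unitary $u$ is an approximate coboundary $v\sigma(v)^*$ only after replacing $u$ by $\exp(-2\pi\im\,\Delta_{\tau_{\Zj}}(u))\,u$; the scalar is harmless in the intertwining (unitaries enter only through $\Ad$), but the stability statement without it is false, and the bookkeeping of $\tau\circ\log$ of the cocycle defects is what most of Proposition \ref{Prop5.1} and Lemma \ref{Lem5.4} are about. Your plan never surfaces this obstruction.

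The second and more serious gap is the one you yourself flag as the ``delicate point'' and then leave unresolved: gluing a path of unitaries across the weak Rohlin tower when the tower consists of positive elements rather than projections. A sum such as $\sum_j\alpha^j(f_n)^{1/2}\,w_j\,\alpha^j(f_n)^{1/2}$ fails to be unitary by an amount that is small only in trace, and nothing intrinsic to $\Zj$ lets you repair that in norm; property (SI) does not perform the ``trace-norm to norm'' upgrade in the role you assign it. The paper avoids this entirely: it tensors $\sigma$ with the identity on two UHF algebras $B^{(0)},B^{(1)}$ of coprime supernatural type, where $\Zj\otimes B^{(i)}\cong B^{(i)}$ and the automorphism has Kishimoto's genuine Rohlin property with honest projections; it solves the cocycle equation there with determinant control (Proposition \ref{Prop5.1}); it interpolates the two solutions by a Lipschitz-controlled path of unitaries over the prime dimension drop algebra $\Zjk$ (Lemma \ref{Lem5.4} together with the Rohlin towers in $\Zj\otimes B^{(0)}\otimes B^{(1)}$); and only then returns to $\Zj$ via the unital, $\sigma$-equivariant embedding $\Zj\otimes\Zjk\hookrightarrow\Zj^{\infty}$ that property (SI) provides through Lemma \ref{Lem5.2} (compare Proposition \ref{Prop3.4}). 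Some substitute for this UHF-plus-dimension-drop detour, or a genuinely new way to correct your overlapping cutoffs to a unitary in norm, is needed before your construction of the conjugating unitaries goes through.
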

For a separable, nuclear $C^*$-algebra $A$ absorbing the Jiang-Su algebra, R$\o$rdam proved that $A$ is purely infinite if and only if $A$ is traceless in \cite{Ror}, and Nakamura proved that the aperiodicity for automorphisms of purely infinite $C^*$-algebras coincides with the Rohlin property in \cite{Nak}. 
If $A$ is a projectionless $C^*$-algebra with a unique tracial state constructed in \cite{Sat2}, the weak Rohlin property is equivalent to the aperiodicity of the automorphism in the GNS-representation associated with the tracial state.  Similar definitions for finite actions, which is called {\it projection free tracial Rohlin property}, has been defined in  \cite{Arc}, \cite{OP}. The first main theorem is an adaptation of the result showed by Hirshberg and Winter in \cite{HW} to projectionless $C^*$-algebras. The second main theorem is an adaptation of the result for UHF algebras showed by Kishimoto in \cite{Kis1} to the Jiang-Su algebra.

The paper is organized as follows: 
In Section 2, we recall the generators of the prime dimension drop algebras  discovered by R$\o$rdam and Winter in \cite{RW}.
In Section 3, for projectionless cases we extend the technical property, which was called property (SI) in \cite{Sat}, to projectionless $C^*$-algebras. By this property, we can obtain the generators defined in Section 2 .
In Section 4, we prove Theorem \ref{Thm1}.
In Section 5, using the weak Rohlin property we show the stability for the automorphisms of the Jiang-Su algebra, and show Theorem \ref{Thm2}.

Concluding this section, we prepare some notations. When $A$ is a $C^*$-algebra, we denote by $A_{\rm sa}$ the set of self-adjoint elements of $A$, $A^1$ the unit ball of $A$, $A_+$ the positive cone of $A$, $U(A)$ the unitary group of $A$, $P(A)$ the set of projections of $A$, $T(A)$ the  tracial state space of $A$. 
We define an inner automorphism of $A$ by $\Ad u (a)$ $=uau^*$ for $u\in U(A)$ and $a\in A$. We denote by $M_n$  the $C^*$-algebra of $n\times n$ matrices with complex entries  and $e_{i,j}^{(n)}$ the canonical matrix units of $M_n$, and we set $e_i^{(n)} = e_{i,i}^{(n)}$.
We denote by $(m,n)$ the greatest common divisor of $m$ and $n\in\N$.

\section{The generators of prime dimension drop algebras}\label{Sec2}
The following argument was given by R$\o$rdam and Winter in \cite{Ror} and \cite{RW}. We would like to begin with some definitions about the generators of prime dimension drop algebras and show Proposition \ref{Prop1}. 
We denote by $I(k,k+1)$, $k\in \N$ the prime dimension drop algebra 
\[ \{f\in C([0,1])\otimes M_k \otimes M_{k+1} ;\ f(0) \in M_{k}\otimes1_{k+1},\ f(1)\in 1_k\otimes M_{k+1} \}, \]
and set the self adjoint unitary 
\[ u_1 = \sum_{i,j} e_{i,j}^{(k)} \otimes e_{j,i}^{(k)} \in U (M_k \otimes M_k).\]
Define non-unital $*$-homomorphisms $\rho_0 :M_k\otimes M_k \hookrightarrow M_k\otimes M_{k+1}$ by $\rho_0(e_{i,j}^{(k)}\otimes e_{l,m}^{(k)})= e_{i,j}^{(k)}\otimes e_{l,m}^{(k+1)}$,  and $\rho: C([0,1])\otimes M_k \otimes M_k \hookrightarrow C([0,1])\otimes M_k \otimes M_{k+1}$ by 
$ \rho(f)(t)= \rho_0(f(t))$, $t\in[0,1]$. 
Let $u \in U(C([0,1])\otimes M_k \otimes M_{k}) $ be such that $u(0) =1$ and $u(1) = u_1$, and set
\begin{eqnarray} 
v &=& \sum_{j=1}^k e_{1,j}^{(k)} \otimes e_{j,k+1}^{(k+1)},\nonumber \\
w(t) &=& \rho(u)(t) \oplus \cos^{1/2} (\pi t/2) 1_k\otimes e_{k+1}^{(k+1)},  \nonumber \\
c_j (t) &=& w(t)(e_{1,j}^{(k)}\otimes 1_{k+1}) w^*(t), \quad j=1,2,...,k, \nonumber \\
s  (t)&=& \sin (\pi t/2) w(t) v, \quad t\in [0,1]. \nonumber
\end{eqnarray}
Since $c_j (0) = e_{1,j}^{(k)} \otimes 1_{k+1}$, $c_j(1) = 1_k \otimes e_{1,j}^{(k+1)}$, $s (0) = 0$, and $s(1)=$ $1_k$$ \otimes$$ e_{1,k+1}^{(k+1)}$, it follows that $c_j$, $s \in I(k, k+1)$. 
And we have that 
\begin{eqnarray}
w^*w (t) &=& w w^* (t) = 1_k \otimes (1_{k+1}- e_{k+1}^{(k+1)}) \oplus \cos(\pi t/2) 1_k \otimes  e_{k+1}^{(k+1)}, \nonumber \\
c_ic_j^* &=& ww^*w (e_{1,i}^{(k)} e_{j,1}^{(k)} \otimes 1_{k+1} ) w^* =\delta_{i,j}c_1^2 , \nonumber \\
\sum_{j=1}^k c_j^* c_j &=& (w^*w)^2, \nonumber \\
s^* s (t) &=&  \sin^2(\pi t/2) 1_k \otimes e_{k+1}^{(k+1)}, \nonumber \\
c_1 s (t) &=& \sin( \pi t/2) w(e_1^{(k)} \otimes 1_{k+1}) w^* w (t) (\sum_{i=1}^k e_{1,i}^{(k)} \otimes e_{i, k+1}^{(k+1)}) = s (t). \nonumber \end{eqnarray}
From these computations, it follows that $\{ c_j \}_{j=1}^k \cup \{ s\}$ satisfies
\[ c_1 \geqq 0,\quad c_i c_j^* =\delta_{i,j} c_1^2,\] 
\[\sum_{j=1}^k c_j^* c_j + s^*s = 1 ,\quad c_1 s =s. \]
To be convenient, we denote by $\cR_k$ the above relations on generators of a unital $C^*$-algebra.

Fix a separable infinite-dimensional Hilbert space $\cH$, and set 
\[\Lambda=\{ \{c_j'\}_{j=1}^k \cup\{ s'\} \subset B(\cH)^1; {\rm \ satisfies\ } \cR_k\} \subset 2^{B(\cH)^1}. \]
For $\lambda \in \Lambda$, let $c_{j,\lambda}\in \lambda$, $j=$ $1,2,$ $...,k$ and $s_{\lambda}\in \lambda$ be generators corresponding to $c_j$, $j=$ $1,2,$ $...,k$, and $s$ on the relations $\cR_k$, and define $\tc_j=\bigoplus_{\lambda\in \Lambda}$ $c_{j,\lambda}$, $\ts=\bigoplus_{\lambda \in \Lambda}$ $s_{\lambda}$ $\in B(\bigoplus_{\lambda \in \Lambda} \cH) $. The set  $\{\tc_j\}_{j=1}^k$ $\cup \{\ts\}$
 satisfies the relations $\cR_k$. Let $C^*(\{ \tc_j\}_{j=1}^k$ $\cup \{\ts\})$ be the 
$C^*$-subalgebra of $B(\bigoplus_{\lambda \in \Lambda} \cH) $ generated by $\{\tc_j\}$ $\cup \{\ts \}$. Then, we can identify $C^*(\{ \tc_j\}_{j=1}^k$ $\cup \{\ts\})$  with the universal $C^*$-algebra on a set of generators satisfying the relations $\cR_k$.

\begin{proposition}[Proposition 5.1 in \cite{RW}]\label{Prop1}
The universal $C^*$-algebra $C^*$ $(\{\tc_j\}_{j=1}^k$ $ \cup$ $ \{ \ts\} )$ is isomorphic to $I(k,k+1)$ with 
$\tc_j \mapsto c_j$ and $\ts \mapsto s$. 
\end{proposition}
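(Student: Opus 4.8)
The plan is the standard two-step identification of a universal $C^*$-algebra. Write $B=C^*(\{\tc_j\}_{j=1}^k\cup\{\ts\})$ for the universal $C^*$-algebra on the relations $\cR_k$. The computations recorded above show that the concrete elements $c_1,\dots,c_k,s\in I(k,k+1)$ satisfy $\cR_k$, so the universal property yields a $*$-homomorphism $\pi\colon B\to I(k,k+1)$ with $\pi(\tc_j)=c_j$ and $\pi(\ts)=s$, whose range is $C^*(\{c_j\}_{j=1}^k\cup\{s\})$. The first point to settle is that this range is all of $I(k,k+1)$, i.e.\ that $c_1,\dots,c_k,s$ generate the dimension drop algebra. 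For this I would reconstruct enough matrix units and continuous functions of the path variable: at the endpoints $\{c_j(0)\}_j=\{e^{(k)}_{1,j}\}_j$ generates $M_k\otimes1_{k+1}$ and $\{c_j(1)\}_j\cup\{s(1)\}=\{e^{(k+1)}_{1,m}:1\le m\le k+1\}$ generates $1_k\otimes M_{k+1}$; in the interior $w(t)$ is invertible, and the functional calculus of the positive element $c_1$ (whose fibre-wise spectrum is contained in $\{0,\cos(\pi t/2),1\}$) and of $s^*s=\sin^2(\pi t/2)\,1_k\otimes e^{(k+1)}_{k+1}$ recovers the scalar functions $t\mapsto\cos(\pi t/2)$, $t\mapsto\sin^2(\pi t/2)$ and the remaining fibre-wise matrix units. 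Granting this, $\pi$ is onto $I(k,k+1)$.

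It remains to prove $\pi$ injective. Since the intersection of the kernels of the irreducible representations of $B$ is $0$, it suffices to show that every irreducible representation $\varphi$ of $B$ satisfies $\ker\pi\subseteq\ker\varphi$, i.e.\ is unitarily equivalent to the composition of $\pi$ with a point evaluation of $I(k,k+1)$. So let $\varphi$ be irreducible and put $C_j=\varphi(\tc_j)$, $S=\varphi(\ts)$ on a Hilbert space $\cH$, irreducibly satisfying $\cR_k$. I would squeeze the following structure out of $\cR_k$: $C_1\ge0$ is a contraction (from $\sum_j C_j^*C_j+S^*S=1$); for $j\ne1$ the relation $C_1C_j^*=0$ and its adjoint $C_jC_1=0$ give $C_jf(C_1)=f(0)C_j$ for continuous $f$, hence $C_jS=\delta_{1j}S$ and $\range(S)$ lies in the $1$-eigenspace of $C_1$; from $C_iC_j^*=\delta_{ij}C_1^2$ the row $(C_1,\dots,C_k)$ and $C_1$ have a common range projection $q$, and $C_jq=0$ for $j\ne1$ while $C_1=qC_1=C_1q$. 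Substituting back into $C_1^2+\sum_{j\ge2}C_j^*C_j+S^*S=1$ and splitting $\cH$ along $q$ and $1-q$, one pins the tuple $(C_1,\dots,C_k,S)$ down up to unitary equivalence: $\cH$ is finite-dimensional of dimension at most $k(k+1)$, the unique point of $\spec(C_1)$ in the open interval $(0,1)$, if present, equals $\cos(\pi t/2)$ and --- together with the ranks of the endpoint spectral projections of $C_1$ --- determines a single $t\in[0,1]$, and a unitary intertwines $\varphi$ with $\pi$ followed by evaluation at $t$. Hence $\ker\pi\subseteq\ker\varphi$, so $\pi$ is injective and $B\cong I(k,k+1)$ via $\tc_j\mapsto c_j$, $\ts\mapsto s$.

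The main obstacle is the rigidity inside the representation-theoretic step: proving that $\cR_k$ is a \emph{complete} set of relations, i.e.\ that an arbitrary irreducible representation forces $(C_1,\dots,C_k,S)$, up to unitary equivalence, into the explicit normal form $C_j=w(t)(e^{(k)}_{1,j}\otimes1_{k+1})w(t)^*$, $S=\sin(\pi t/2)\,w(t)v$ for one parameter $t$, together with the two degenerate $k$- and $(k+1)$-dimensional forms at $t=0$ and $t=1$. The careful bookkeeping with the range projections, the $1$-eigenspace of $C_1$, and the endpoint compatibility is where the proof really has content; a possible shortcut is to present $I(k,k+1)$ as a pullback of matrix algebras along endpoint evaluations (or to use its semiprojectivity) and match that against a parallel description of $B$, but the same rigidity ultimately has to be checked.
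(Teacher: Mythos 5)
Your overall architecture matches the paper's: surjectivity of the canonical map onto $I(k,k+1)$ via the identity $\sum_j c_j^*ss^*c_j+s^*s=\sin^2(\pi t/2)\,1$, a partition of unity, and fibrewise generation; then injectivity by showing every irreducible representation of the universal algebra factors through a point evaluation. The surjectivity half is fine and is essentially the paper's argument.

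The injectivity half, however, has a genuine gap, and you have located it yourself without filling it: you assert that an irreducible tuple $(C_1,\dots,C_k,S)$ satisfying $\cR_k$ lives on a Hilbert space of dimension at most $k(k+1)$ and that $\spec(C_1)$ meets $(0,1)$ in at most one point, but neither claim follows from the range-projection bookkeeping you describe, and both are exactly what has to be proved. The paper's key device, which is absent from your proposal, is the element $\tb=\sum_{j}\tc_j^*\ts\ts^*\tc_j+\ts^*\ts$: a direct computation from $\cR_k$ (using $\tc_1^2=\tc_j\tc_j^*$, $\ts^*\ts^2=0$, etc.) shows $\tb$ is \emph{central} in the universal algebra, so in an irreducible representation it is a scalar $\beta\in[0,1]$. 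This single scalar replaces your spectral analysis of $C_1$: it immediately splits the problem into the cases $\beta=0$, $\beta=1$, $0<\beta<1$, and in the last case one writes down explicit matrix units $E_{i,j}=(\beta(1-\beta))^{-1}\bc_i^*\bs\bc_j^*\bc_j\bs^*\bc_i$ and partial isometries $F_{i,j}$ whose relations force $\cH\cong\C^{k(k+1)}$ and exhibit the unitary equivalence with evaluation at the $t$ determined by $\sin^2(\pi t/2)=\beta$. Without a central element (or some substitute yielding a scalar invariant in irreducible representations), your route gives no control on $\dim\cH$ or on $\spec(C_1)$, so the ``rigidity'' you flag as the main obstacle remains unestablished; the proposal as written does not constitute a proof of injectivity.
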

\begin{proof}
First we show that $C^*(\{c_j\}_j \cup \{s\}) = I(k,k+1)$. 
Since 
\begin{eqnarray}
\sum_{j=1}^k c_j^* ss^* c_j + s^* s(t) &=& \sin^2(\pi t/2) (1_k \otimes 1_{k+1}), \nonumber 
\end{eqnarray}
and $1_k \otimes 1_{k+1}$ $\in C^* (\{ c_j\}_j \cup \{ s \})$, we have that $C([0,1])\otimes 1_k \otimes 1_{k+1} \subset C^* (\{ c_j\}_j \cup \{ s \} )$. 
By a partition of unity argument on $[0,1]$, it suffices to show that 
$C^* (\{c_j \}_j \cup \{s \} ) (i) \cong M_{k+i}$, $ i=0,1$, and $C^* (\{ c_j \}_j \cup \{ s \} ) (t) \cong M_k \otimes M_{k+1}$, $t \in (0,1)$. 
Since $c_j (0)=$ $ e_{1,j}^{(k)}\otimes 1_{k+1}$, $c_j (1) = $ $1_k \otimes e_{1,j}^{(k+1)}$, $j=$ $1$, $2$, $...$, $k$, and $s (1) =1_k \otimes e_{1,k+1}^{(k+1)}$, it follows that $C^* (\{c_j \}_j \cup \{s \})$ $(i)$ $\cong M_{k+i}$, $i=0,1$. Since
\begin{eqnarray}
s c_j s ^* (t) &=& \sin^2(\pi t/2) \cos (\pi t /2) \rho (u) (e_{1,1}^{(k)} \otimes e_{1,j}^{(k+1)}) \rho (u)^*(t),\quad j=1,2,...,k, \nonumber \\
s c_j s^* c_i (t) &=& \sin^2(\pi t/2) \cos (\pi t /2) \rho (u)(e_{1,i}^{(k)} \otimes e_{1,j}^{(k+1)} ) \rho (u)^*(t),\ i,j=1,2,...,k, \nonumber \\
s c_j (t) &=& \sin(\pi t/2) \cos (\pi t /2) \rho (u)(t) (e_{1,j}^{(k)} \otimes e_{1,k+1}^{(k+1)}), \ j=1,2,...,k, \nonumber 
\end{eqnarray}
for $t \in (0,1)$,
 we have that $C^*(\{ c_j \}_j \cup \{ s \} ) (t) =$ $M_k \otimes M_{k+1}$ for $t \in (0,1)$.

Set $A$ $=$ $C^*(\{\tc_j\}_{j=1}^k\cup \{\ts\})$. Let $\Phi: A \rightarrow I(k,k+1)$ be the $*$-homomorphism defined by $\Phi (\tc_j) = c_j$ and $\Phi (\ts) = s$. It remains to show that $\Phi$ is injective. Let $(\pi, \cH)$ be an irreducible 
representation of $A$. Because for any $a\in A$ there exists an irreducible representation of $A$ which preserves the norm of $a$ (see \cite[4.3.10]{Ped}), it suffices to show that there exists a representation $\varphi$ of $I(k,k+1)$ on $\cH$
such that $\varphi(c_j)= \pi (\tc_j)$ and $\varphi (s) = \pi (\ts)$.

Set \[ \tb= \sum_{j=1}^k \tc_j^* \ts \ts^* \tc_j + \ts^* \ts . \] 
By the following computations, we see that $\tb$ is in the center of $A$. Since $\{\tc_j \}_j \cup \{ \ts \}$ satisfies the relations $\cR_k$, in particular $\tc_1^2 = \tc_j \tc_j^* $, we have that 
\begin{eqnarray}
\tb \tc_j &=& \ts \ts^* \tc_j + \ts^* \ts \tc_j, \nonumber \\
\tc_j \tb &=& \ts \ts^* \tc_j + \tc_j \ts^* \ts, \nonumber \\
\ts^* \ts \tc_j &=& \tc_j - \tc_1^2 \tc_j = \tc_j \ts^* \ts, \nonumber \\
\tb\ts &=& \tb\tc_1 \ts = \ts \ts^* \ts + \ts^* \ts^2, \nonumber \\
\ts \tb &=& \sum  \ts  \tc_j^*  \ts  \ts^*  \tc_j +  \ts  \ts^*  \ts, \nonumber \\
 \ts^*  \ts^2 &=&  \tc_1  \ts -  \tc_1^3  \ts =  \ts -  \ts =0, \nonumber \\
 \ts^*  \ts  \tc_j^*  \ts  \ts^*  \tc_j &=&  \tc_j^*  \ts  \ts^*  \tc_j -  \tc_j^*  \tc_j \tc_j^*  \ts  \ts^*  \tc_j =0. \nonumber
\end{eqnarray}
Hence $[ \tb,  \tc_j]=0$ and $[\tb,  \ts]=0$. 

Set $\bc_j=\pi(\tc_j)$, $\bs = \pi (\ts)$, and $\bb= \pi (\tb)$. 
Since $0\leqq \tb \leqq 1$, we have that $\spec (\tb)=[0,1]$ and obtain $\beta \in [0,1]$ such that $\beta 1=\bb$. When $\beta =0$  we have $\bs=0$. Thus $\{ \bc_j \}_j$ satisfies the relations for matrix units $\{ e_{1,j}^{(k)} \}$ of $M_k$, and then $\cH = \C^k$. Set $V_0: I(k,k+1) \rightarrow M_k $ as the irreducible representation at $0$. Since $V_0 (c_j)= e_{1,j}^{(k)}$ we obtain a unitary $u_0 \in U (M_k)$ such that $\Ad u_0 \circ V_0 (c_j ) = \bc_j $
and define $\varphi = \Ad u_0 \circ V_0$.  

When $\beta =1$ , by the following computations, we see that $\bc_j^* \bc_j$, $j=1,2,$ $...,n$, and $\bs^* \bs$ are orthogonal projections. 
Since $\bb=1$, we have that $\sum \bc_j^* (1- \bs \bs^*) \bc_j =0$. Then it follows that $\bc_1^2 = $ $\bs \bs^*$, $\bc_1^4=$ $\bc_1^2$,  $(\bc_j^* \bc_j)^2 = $ $\bc_j^* \bc_1^2 \bc_j=$ $\bc_j^* \bc_j$, and $(\bs^*\bs)^2 =$ $\bs^* \bc_1^2 \bs$ $= \bs^*\bs $. From $\sum \bc_j^* \bc_j$ $+ \bs^* \bs$ $=1$ it follows that $\bc_j^* \bc_j$, $j=1,2,...,k$ and $\bs^*\bs$ are mutually orthogonal projections. Hence $\{\bc_j\}_j \cup \{\bs\}$ satisfies the relations for matrix units  $\{ e_{1,j}^{(k+1)} \}_{j=1}^{k+1}$ of $M_{k+1}$.   Then we see that $\cH = \C^{(k+1)}$ and can define $\varphi: I(k,k+1) \rightarrow B(\cH)$ as the irreducible representation of $I(k,k+1)$ at $t=1$ up to unitary equivalence. 

When $0 < \beta <1 $, by the following computations,  we see that 
\begin{eqnarray}
E_{i,j} &=& (\beta (1- \beta))^{-1} \bc_i^* \bs \bc_j^* \bc_j \bs^* \bc_i, \nonumber \\
E_{j,k+1} &=& (\beta(1-\beta))^{-1} \bc_j^* \bc_j \bs^* \bs,\nonumber 
\end{eqnarray}
 $i,j=1,2,...,k$, are mutually orthogonal projections. Since $\bb\bs = \bs \bs^* \bs$ and $(1-\bb)\bs^* \bs = (1-\bs^* \bs) \bs^* \bs$, we have that
\begin{eqnarray}
E_{i,j}^2 &=& (\beta(1-\beta))^{-2} \bc_i^* \bs \bc_j^* \bc_j \bs^* \bc_1^2 \bs \bc_j^* \bc_j \bs^* \bc_i \nonumber \\
 &=& (\beta(1-\beta))^{-2} \bc_i^* \bs \bs^* \bs \bc_j^* \bc_j \bc_j^* \bc_j \bs^* \bc_i \nonumber \\
&=& \beta^{-1} (1-\beta)^{-2} \bc_i^* \bs (1-\bs^* \bs) \bc_j^*\bc_j \bs^* \bc_i = E_{i,j}, \nonumber \\
E_{j,k+1}^2 &=& (\beta (1- \beta))^{-2} (\bc_j^* \bc_j)^2 (\bs^* \bs)^2 \nonumber \\
&=& \beta^{-1}(1-\beta)^{-2} \bc_j^* \bc_j (1- \bs^* \bs) \bs^* \bs =E_{j,k+1}, \nonumber \\
\sum_{i,j} E_{i,j}+ \sum_j E_{j,k+1} &=& (\beta(1-\beta))^{-1} (\sum_{i=1}^k \bc_i^* \bs (1-\bs^* \bs) \bs^* \bc_i +(1- \bs^* \bs) \bs^* \bs) \nonumber \\
&=& \beta^{-1} (\sum_{i=1}^k \bc_i^* \bs \bs^* \bc_i + \bs^* \bs) =1. \nonumber
\end{eqnarray}

Set 
\begin{eqnarray}
& &F_{i,j}= \beta^{-1} (1- \beta)^{-1/2} \bs \bc_j \bs^* \bc_i, \nonumber \\
& &F_{j, k+1} = (\beta (1-\beta))^{-1/2} \bs \bc_j,\quad i,j=1,2,...,k. \nonumber 
\end{eqnarray}
Then it follows that $F_{i,j}^* F_{i,j} $ $= E_{i,j}$, $F_{j,k+1}^* F_{j,k+1} $ $=E_{j,k+1}$, $F_{i,j}F_{i,j}^*=E_{1,1}$, and $F_{j,k+1} F_{j,k+1}^*$ $= E_{1,1}$.
 Thus $\{ F_{i,j} \}_{i,j} \cup \{ F_{j,k+1} \}_j$ satisfies the same relations as matrix units $\{ e_{1,i}^{(k)} \otimes e_{1,j}^{(k+1)} \}_{i,j} $ $\cup$ $\{e_{1,j}^{(k)} \otimes e_{1,k+1}^{(k+1)} \}_j$ of $M_k \otimes M_{k+1}$. It is not so  hard to see that 
$\beta^{1/2}\sum_{j=1}^k F_{1,j}^*F_{j,k+1}$ $=\bs$, $\beta^{1/2} \sum_{j=1}^k F_{1,j}^* F_{i,j} =$ $\bs \bs^* \bc_i$, and $\beta \bc_i= $ $\bs \bs^* \bc_i+$ $  \bs^* \bs \bc_i $. Then we have that $C^* (\{F_{i,j}\} \cup \{F_{j,k+1} \} ) =$ $ C^* ( \{ \bc_j\}_j \cup \{ \bs \})$ and  $\cH = \C^{k(k+1)}$. Let $V_{\beta}$ be the irreducible representation of $I(k,k+1)$ at $t$ $\in(0,1)$ with $\sin^{2}(\pi t/2)=\beta$. Then $V_{\beta}\circ\Phi (\tb)=\beta$ and there exists a unitary $u_{\beta}$ such that 
\begin{eqnarray}
F_{i,j} = \beta^{-1} (1-\beta)^{-1/2} \Ad u_{\beta} \circ V_{\beta} (s c_j s^* c_i), \nonumber \\
F_{j,k+1}= (\beta (1-\beta))^{-1/2}\Ad u_{\beta} \circ V_{\beta} (s c_j). \nonumber 
\end{eqnarray}
Hence, we have that $\Ad u_{\beta} \circ V_{\beta} (c_j) = \bc_j$ and $\Ad u_{\beta} \circ V_{\beta} (s) = \bs $ and obtain $\varphi = \Ad u_{\beta} \circ V_{\beta}$. This completes the proof.    
\end{proof}

\begin{corollary}\label{Cor2.1}
Let $A$ be a unital $C^*$-algebra. Suppose that $A^1$ contains $\bc_i$, $i=$ $1,2,...,k$, and $\bs$ satisfying the relations $\cR_k$ and $\spec (|\bs|) = [0,1] $. 
Then there exists a unital embedding $\varphi : I(k,k+1) \hookrightarrow A$ such that $\varphi (c_i) = \bc_i$ and $\varphi (s) = \bs$.  
\end{corollary}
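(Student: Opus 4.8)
The plan is to build the embedding from the universal property of Proposition~\ref{Prop1} and then to force injectivity using the hypothesis $\spec(|\bs|)=[0,1]$ together with the ideal structure of $I(k,k+1)$. Since $\bc_1,\dots,\bc_k,\bs\in A^1$ satisfy the relations $\cR_k$ — in particular $\sum_{j=1}^k\bc_j^*\bc_j+\bs^*\bs=1_A$ — the universal property of $C^*(\{\tc_j\}_j\cup\{\ts\})$ and its identification with $I(k,k+1)$ from Proposition~\ref{Prop1} yield a \emph{unital} $*$-homomorphism $\varphi\colon I(k,k+1)\to A$ with $\varphi(c_i)=\bc_i$ and $\varphi(s)=\bs$. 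It remains only to prove $\ker\varphi=0$.

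To this end I would use the standard description of the primitive ideal space of a prime dimension drop algebra: every irreducible representation of $I(k,k+1)$ is unitarily equivalent to an evaluation $\mathrm{ev}_t$ at some $t\in[0,1]$, and $t\mapsto\ker\mathrm{ev}_t$ is a homeomorphism of $[0,1]$ onto $\mathrm{Prim}(I(k,k+1))$. As every closed ideal of a $C^*$-algebra is the intersection of the primitive ideals containing it, there is a closed set $F\subseteq[0,1]$ with $\ker\varphi=\{f\in I(k,k+1):f|_F=0\}$; hence $\varphi$ induces an isometric unital embedding of the restriction algebra $I(k,k+1)|_F:=\{f|_F:f\in I(k,k+1)\}$ into $A$. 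Writing $q$ for the corresponding quotient map and using $\varphi(|s|)=\varphi\big((s^*s)^{1/2}\big)=(\bs^*\bs)^{1/2}=|\bs|$, we obtain
\[\spec_A(|\bs|)=\spec_A\big(\varphi(|s|)\big)=\spec_{I(k,k+1)|_F}\big(q(|s|)\big).\]

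Now I would compute the right-hand spectrum fiberwise. From $s^*s(t)=\sin^2(\pi t/2)\,1_k\otimes e_{k+1}^{(k+1)}$ we get $|s|(t)=\sin(\pi t/2)\,1_k\otimes e_{k+1}^{(k+1)}$, whose spectrum in the fiber of $I(k,k+1)$ over $t$ equals $\{0,\sin(\pi t/2)\}$ for every $t\in[0,1]$ (endpoints included). Hence $\spec_{I(k,k+1)|_F}(q(|s|))=\{0\}\cup\{\sin(\pi t/2):t\in F\}$, a closed set since $F$ is compact. Equating it with $\spec_A(|\bs|)=[0,1]$ gives $\{\sin(\pi t/2):t\in F\}=[0,1]$, and since $t\mapsto\sin(\pi t/2)$ is a homeomorphism of $[0,1]$ onto itself this forces $F=[0,1]$. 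Therefore $\ker\varphi=0$, so $\varphi$ is a unital embedding $I(k,k+1)\hookrightarrow A$ with $\varphi(c_i)=\bc_i$ and $\varphi(s)=\bs$, as desired.

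The only step I expect to need genuine care is the identification of the quotients of $I(k,k+1)$ with restriction algebras $I(k,k+1)|_F$, i.e. the computation of $\mathrm{Prim}(I(k,k+1))$ — everything else is the universal property and a routine spectrum calculation. One could instead avoid quoting this by re-running the argument of Proposition~\ref{Prop1}: the element $\bb:=\sum_{j=1}^k\bc_j^*\bs\bs^*\bc_j+\bs^*\bs$ is central in $C^*(\{\bc_j\}_j\cup\{\bs\})$ by the same computation used there for $\tb$, and with $\spec(\bb)=[0,1]$ (which follows from $\spec(|\bs|)=[0,1]$) one reconstructs $I(k,k+1)$ fiberwise over $\spec(\bb)$ exactly as in the cases $\beta=0$, $\beta=1$, and $0<\beta<1$ of that proof; but this essentially re-derives the same structural fact.
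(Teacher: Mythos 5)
Your proof is correct and follows essentially the same route as the paper: both obtain $\varphi$ from the universal property, identify $\ker\varphi$ with the ideal of functions vanishing on a closed subset $F\subseteq[0,1]$, and force $F=[0,1]$ from $\spec(|\bs|)=[0,1]$ using $|s|(t)=\sin(\pi t/2)\,1_k\otimes e_{k+1}^{(k+1)}$. If anything you are slightly more careful than the paper, which writes $\spec(|\bs|)=\sin(\tfrac{\pi}{2}X_{\varphi})$ and omits the extra point $0$ contributed by the kernel of $|s|(t)$ on $1_k\otimes(1_{k+1}-e_{k+1}^{(k+1)})$, though this does not affect the conclusion.
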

\begin{proof}
By the universal property of $I(k,k+1)$,
 there exists a unital $*$ homomorphism $\varphi$ $:I(k,k+1)$ $\rightarrow A$ such that $\varphi (c_j) = \bc_j$ and $\varphi (s) =\bs $.  We denote by $X_{\varphi}$ the compact subset of $[0,1]$ determined by $ \ker (\varphi) =$ $\{ f \in I(k,k+1) ; f|_{X_{\varphi}} =0 \}$. For any $f\in C([0,1])$ it follows that $f(|s|) (t)=$ $f\circ \sin(\pi t/2 )$ $ 1_{k}\otimes e_{k+1}^{(k+1)}$. Then we have that $f|_{\sin(\frac{\pi}{2} X_{\varphi})} =0 $ $\Leftrightarrow $ $f(|s|) |_{X_{\varphi}}=0$ $\Leftrightarrow$ $f(|\bs|)=0$ $\Leftrightarrow$ $f|_{\spec (|\bs|)}=0$. Thus $\spec (|\bs|) = \sin (\frac{\pi}{2} X_{\varphi})$, which concludes that $\ker (\varphi) =  0 $. \end{proof}

\section{Property (SI) for projectionless $C^*$-algebras}
\begin{definition}
Let A be a unital $C^*$-algebra and  $\tau\in T(A)$. 
We recall the dimension function $d_{\tau}$ and define $\od_{\tau} : A_+^1 \rightarrow \R_+$ by  
\begin{eqnarray}
 d_{\tau} (f) &=& \lim_{n\rightarrow \infty}  \tau( (1/n + f)^{-1} f),  \nonumber \\
 \od_{\tau} (f) &=& \lim_{n \rightarrow \infty} \tau (f^n),\quad f \in A_+^1. \nonumber \end{eqnarray}
\end{definition}

\begin{lemma}\label{Lem3.2}
For $f_n \in A_+^1$, $n\in \N$ with $ (f_n)_n \in A_{\infty}$ and an increasing sequence $m_n$, $n\in \N$ with $m_n\nearrow \infty$, it follows that 
\begin{description}
\item[(1)]\label{(1)}
If $\displaystyle \lim_{n\rightarrow \infty} \max_{\tau \in T(A)} \tau (f_n) =0 $ then there exists $\tf_n \in A_{+}^1$, $n\in \N$ such that $(\tf_n)_n = (f_n)_n$ and $\displaystyle \lim_{n\rightarrow \infty } \max_{\tau \in T(A)}$ $d_{\tau} (\tf_n) = 0 $.
\item[(2)]\label{(2)}
There exists $\tf_n\in $ $A_+^1$, $n\in \N$ such that $(\tf_n )_n= (f_n)_n $ and $\displaystyle \liminf_{n\rightarrow \infty} \min_{\tau \in T(A)}$ $\od_{\tau} (\tf_n)$ $\geq$ $\liminf_n \min_{\tau} \tau (f_n^{m_n})$.
\item[(3)]\label{(3)}
If $A$ absorbs $\Zj$ tensorially, then there exists $f_n^{(i)}\in A_+^1$, $i=0,1$, $n\in \N$ such that $(f_n^{(i)})_n \in A_{\infty}$, $f_n^{(0)} f_n^{(1)}=0$, $(f_n^{(i)})_n \leq (f_n)_n$, $i=0,1$, and $\displaystyle \liminf_{n\rightarrow \infty} \min_{\tau \in T(A)}$ $\od_{\tau} (f_n^{(i)})$ $\geq$ $\liminf_n \min_{\tau} \tau (f_n^{m_n})/2$.
\end{description}
\end{lemma}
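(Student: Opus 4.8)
\emph{Proof proposal.}

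\textbf{Parts (1) and (2).} In both cases the plan is to leave the class of $f_n$ in $A^{\infty}$ unchanged while moving its spectral mass away from one endpoint of $[0,1]$ by continuous functional calculus, and to control $d_\tau$, resp. $\od_\tau$, by an elementary inequality for the distribution of $f_n$. For (1), put $\varepsilon_n=\max_{\tau}\tau(f_n)+1/n\to 0$, $\delta_n=\sqrt{\varepsilon_n}\to 0$, and set $\tf_n=(f_n-\delta_n)_+$. Then $\|\tf_n-f_n\|\le\delta_n$, so $(\tf_n)_n=(f_n)_n$, and the support projection of $\tf_n$ is $\chi_{(\delta_n,1]}(f_n)$, so Markov's inequality $\chi_{(\delta_n,1]}(t)\le t/\delta_n$ gives $d_\tau(\tf_n)=\tau(\chi_{(\delta_n,1]}(f_n))\le\tau(f_n)/\delta_n\le\sqrt{\varepsilon_n}\to 0$ uniformly in $\tau$. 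For (2), pick $\eta_n\to 0$ with $(1-\eta_n)^{m_n}\to 0$ (possible since $m_n\to\infty$), let $\phi_n(t)=\min\{t/(1-\eta_n),1\}$ and $\tf_n=\phi_n(f_n)$. Then $\|\tf_n-f_n\|\le\eta_n$, so $(\tf_n)_n=(f_n)_n$, while $\{t:\phi_n(t)=1\}=[1-\eta_n,1]$ gives $\od_\tau(\tf_n)=\tau(\chi_{[1-\eta_n,1]}(f_n))$; since $f_n^{m_n}\le\chi_{[1-\eta_n,1]}(f_n)+(1-\eta_n)^{m_n}$, this is $\ge\tau(f_n^{m_n})-(1-\eta_n)^{m_n}$ for every $\tau$, and (2) follows after taking $\min_\tau$ and $\liminf_n$.

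\textbf{Part (3), reduction and ingredients.} First I would apply (2) to $(f_n)_n$ and $(m_n)_n$, replacing $f_n$ by $\tf_n$ with $(\tf_n)_n=(f_n)_n$ and $\liminf_n\min_\tau\od_\tau(\tf_n)\ge\liminf_n\min_\tau\tau(f_n^{m_n})$; it then suffices to split each $\tf_n$ into two orthogonal central positive elements below it, each carrying (asymptotically) at least half of $\od_\tau(\tf_n)$. I would use $\Zj$-absorption tensorially. For each $n$, fix orthogonal positive contractions $e_0^{(n)},e_1^{(n)}\in\Zj$ with $\od_{\tau_{\Zj}}(e_i^{(n)})\ge 1/2-1/n$ (here $\tau_{\Zj}$ is the unique trace of $\Zj$): by strict comparison of positive elements in $\Zj$ two mutually orthogonal positive contractions of normalized rank slightly below $1/2$ fit inside $\Zj$, and then one applies a function vanishing near $0$ and equal to $1$ away from $0$ to push the nonzero part of their spectra up to $1$ — this last step forces $\od$ rather than $d$, and it preserves orthogonality since the function vanishes at $0$. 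Since $A$ is separable and $\Zj$ is strongly self-absorbing, standard properties of $\Zj$ give isomorphisms $\theta_n:A\to A\otimes\Zj$ with $\|\theta_n(a)-a\otimes 1\|\to 0$ for every $a\in A$ and $\|\theta_n(\tf_n)-\tf_n\otimes 1\|\to 0$ (the inclusion $a\mapsto a\otimes 1$ is approximately unitarily equivalent to an isomorphism, and one diagonalizes over the bounded sequence $(\tf_n)_n$ together with a dense sequence in $A$).

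\textbf{Part (3), execution.} Set $f_n^{(i)}=\theta_n^{-1}(\tf_n\otimes e_i^{(n)})$ for $i=0,1$. Then $f_n^{(0)}f_n^{(1)}=\theta_n^{-1}(\tf_n^{2}\otimes e_0^{(n)}e_1^{(n)})=0$; since $e_i^{(n)}\le 1$ and $\|\theta_n^{-1}(\tf_n\otimes 1)-\tf_n\|\to 0$ we get $(f_n^{(i)})_n\le(\tf_n)_n=(f_n)_n$; and the approximate fixing of $A\otimes 1$ by $\theta_n$, together with the centrality of $(\tf_n\otimes 1)_n$ in $A\otimes\Zj$, gives $(f_n^{(i)})_n\in A_\infty$. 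Finally, because $\Zj$ has a unique trace, every tracial state of $A\otimes\Zj$ has the form $\tau'\otimes\tau_{\Zj}$; writing $\tau\circ\theta_n^{-1}=\tau_n'\otimes\tau_{\Zj}$ and using multiplicativity of $\od$ under tensor products and the fact that $\theta_n^{-1}$ preserves powers,
\[\od_\tau(f_n^{(i)})=\od_{\tau_n'}(\tf_n)\cdot\od_{\tau_{\Zj}}(e_i^{(n)})\ge(1/2-1/n)\min_{\tau'\in T(A)}\od_{\tau'}(\tf_n).\]
Taking $\min_\tau$, then $\liminf_n$, and invoking (2) yields $\liminf_n\min_\tau\od_\tau(f_n^{(i)})\ge\tfrac12\liminf_n\min_\tau\tau(f_n^{m_n})$, which is the claim.

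\textbf{Where the difficulty lies.} The delicate point is that $\od_\tau$ is not norm-continuous, so one cannot split $\tf_n$ by an arbitrary ``half of the unit'' produced in the central sequence algebra: cutting by such a positive element $h$ yields $\od_\tau(h^{1/2}\tf_n h^{1/2})=\tau\big(\chi_{\{1\}}(h)\wedge\chi_{\{1\}}(\tf_n)\big)$, which is only $\ge\od_\tau(\tf_n)+\od_\tau(h)-1$ and is useless once $\od_\tau(\tf_n)<1/2$. Passing to $A\otimes\Zj$ through an isomorphism that nearly fixes the central sequence replaces this intersection bound by a genuine \emph{product} $\od_{\tau'}(\tf_n)\cdot\od_{\tau_{\Zj}}(e_i^{(n)})$, and the preliminary use of (2) is what guarantees that it is $\od_\tau(\tf_n)$ — not the possibly much smaller $\od_\tau(f_n)$ — that governs $\tau(f_n^{m_n})$. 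Carrying out the reindexing producing the isomorphisms $\theta_n$, and checking the strict-comparison fact used to build the $e_i^{(n)}$, are the two places that need care but no new idea.
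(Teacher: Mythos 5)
Your proof is correct and follows essentially the same route as the paper: parts (1) and (2) use the identical cut-down functions and Markov/power estimates, and part (3) is the same tensor-splitting argument — first upgrade $\tau(f_n^{m_n})$ to $\od_\tau(\tf_n)$ via (2), then multiply by orthogonal positive contractions of $\Zj$ each carrying $\od_{\tau_{\Zj}}$ close to $1/2$ and use that traces on $A\otimes\Zj$ factor through $\tau_{\Zj}$. The only cosmetic difference is that you package the $\Zj$-absorption through approximately central isomorphisms $\theta_n\colon A\to A\otimes\Zj$, whereas the paper works inside $A\cong A\otimes\bigotimes_{n}\Zj$ and places the new factors $g_n^{(i)}$ in increasingly remote tensor positions; both devices yield the same centrality and the same product formula for $\od_\tau$.
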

\bprf
(1) Let $\varepsilon_n >0$ be such that $\varepsilon_n \searrow 0$ and $\max_{\tau \in T(A)} \tau (f_n) \leq \varepsilon_n^2$. Set 
\begin{eqnarray}
g_{\varepsilon}(t)=\left\{ \begin{array}{ll}
(1 -\varepsilon)^{-1} (t-\varepsilon) ,\quad & \varepsilon \leq t \leq 1, \\
0, \quad  & 0\leq t \leq \varepsilon, \\
\end{array} \right.\nonumber
\end{eqnarray} 
and $\tf_n= g_{\varepsilon_n} (f_n)$. Then we have that 
$\| \tf_n - f_n \| \leq \varepsilon _n $ and $\varepsilon_n \lim_{m\rightarrow \infty } (1/m + \tf_n)^{-1} \tf_n \leq f_n$, which implies that $d_{\tau} (\tf_n) \leq \varepsilon_n$, for any $\tau \in T(A)$.

(2) Let $\varepsilon_n >0$ be such that $\varepsilon_n \searrow 0$, and $(1-\varepsilon_n)^{m_n} \rightarrow 0 $. Set
\begin{eqnarray}
g_{\varepsilon}(t)=\left\{ \begin{array}{ll}
(1 -\varepsilon)^{-1} t,\quad & 0 \leq t \leq 1-\varepsilon, \\
1, \quad  & 1-\varepsilon \leq t \leq 1, \\
\end{array} \right.\nonumber
\end{eqnarray} 
and $\tf_n = g_{\varepsilon_n} (f_n)$. Then we have that $\| \tf_n - f_n \| \leq \varepsilon_n$ and $f_n^{m_n} =$ $f_n^{m_n} (\lim_{l \rightarrow \infty}$ $\tf_n^l + \chi ([0,1-\varepsilon_n))(f_n))$ $\leq \lim_{l \rightarrow \infty} \tf_n^l$ $+ (1- \varepsilon_n )^{m_n}$, (where $\chi(S)$ means the characteristic function of $S$,) which implies that  $\tau (f_n^{m_n})\leq $ $\od_{\tau} (\tf_n) + (1-\varepsilon_n)^{m_n}$, for any $\tau \in T(A)$.
 
(3) Set $c=$ $\displaystyle \liminf_{n\rightarrow \infty } \min_{\tau\in T(A)} \tau(f_n^{m_n})$.  Since $A\cong$ $\displaystyle A\otimes \bigotimes_{n\in \N} \Zj$, we obtain $l_n \in \N$ and $\of_n\in $ $\displaystyle (A\otimes \bigotimes_{j=1}^{l_n} \Zj)_+^1$ such that $l_n \nearrow \infty $ and $m_n\|\of_n - f_n\|$ $\rightarrow 0$, which implies that $(\of_n)_n \in A_{\infty}$ and $\displaystyle \liminf_{n\rightarrow \infty} \min_{\tau \in T(A)} \tau (\of_n^{m_n}) = c$. 
By an argument as in the proof of (2), we obtain $\tf_n \in$ $\displaystyle (A\otimes \bigotimes_{l=1}^{l_n} \Zj)_+^1$, $n\in \N$ such that $(\tf_n)_n = (\of_n)= (f_n)$ and $\displaystyle \liminf_{n\rightarrow \infty} \min_{\tau\in T(A)} \od_{\tau} (\tf_n) \geq c$.
Let $g_n^{(i)} \in \Zj_+^1$, $i=0,1$, $n\in \N$ be such that $g_n^{(0)}g_n^{(1)}=0$, $\liminf_n \od_{\tau_{\Zj}} (g_n^{(i)}) = 1/2$, $i=0,1$, where $\tau_{\Zj}$ means the unique tracial state of $\Zj$. Set $f_n^{(i)}=\tf_n \otimes g_n^{(i)} \in A\otimes \bigotimes_{l=1}^{l_n+1} \Zj $. 
Since $l_n \nearrow \infty$, it follows that $(f_n^{(i)})_n\in A_{\infty}$, and since $\tau ((f_n^{(i)})^p) = \tau (\tf_n^p \otimes 1)\tau_{\Zj}((g_n^{(i)})^p)$, $p\in \N$, $\tau \in T(A)$, it follows that $\liminf_n \min_{\tau} \od_{\tau}(f_n^{(i)}) $ $=\liminf_n \min_{\tau} \od_{\tau} (\tf_n) \od_{\tau_{\Zj}} (g_n^{(i)}) \geq c/2$, $i=0,1$. 
\eprf

In \cite{Sat}, we have defined the technical condition, called property (SI), for $C^*$-algebra with non trivial projections. For projectionless $C^*$-algebras, we generalize this technical condition in the following.
\begin{definition}\label{SI}
We say that $A$ has the {\it property (SI) },
 when for any $e_n$ and $f_n \in A_+^1$, $n\in\N$ satisfying the following conditions:
$(e_n)_n, (f_n)_n \in A_{\infty}$, 
\begin{description}\label{prop:embedding2}
\item \[ \lim_{n\rightarrow \infty} \max_{\tau\in T(A)} \tau (e_n) =0, \] 
\item
\[ \liminf_{n\rightarrow \infty} \min_{\tau \in T(A)}  \tau (f_n^n) > 0,\] \end{description} 
 there exists $s_n\in A^1$, $n\in \N$, such that  $(s_n)_n \in A_{\infty}$ and 
\[(s_n^* s_n) = (e_n),\quad (f_n s_n) = (s_n).\]
\end{definition}

\begin{example}\label{UHFSI}
Any UHF algebra has the property (SI).
\end{example}
\bprf
Let $B$ be a UHF algebra, and let $e_n$ and $f_n\in B_+^1$, $n\in \N$ satisfy the conditions in the property (SI). Let $B_n$, $n\in \N$ be an increasing sequence of matrix subalgebras of $B$ such that $\overline{(\bigcup_{n\in\N} B_n)} =B$ and $1_{B_n}=1_B$. For any $B_{n}$, we denote by $\Phi_n : B \longrightarrow B_n'\cap B$ the conditional expectations (\cite[Section 1]{Sat}).
By $(e_n)_n$, $(f_n)_n$ $\in B_{\infty}$, we obtain a slow increasing sequence $m_n$ $\in\N$, $n$ $\in\N$ such that $m_n \nearrow \infty $, $m_n\leq n$, $(\Phi_{m_n} (e_n))_n =(e_n)_n$, and 
\[ \lim_{n\rightarrow \infty} m_n\| \Phi_{m_n} (f_n) -f_n \| =0,\]
and we obtain 
a fast increasing sequence $l_n$, $n\in\N$, and $\ove_n$, $\of_n$ $\in (B_{m_n}'\cap B_{l_n})_+^1$ such that  $m_n<l_n$, $(\ove_n)_n=(\Phi_{m_n}(e_n))_n$, and  $\lim_{n\rightarrow \infty}$ $m_n \| \of_n$ $- \Phi_{m_n} (f_n) \| =0$.
Then we have that $\lim \tau (\ove_n)=0$ and  $\lim \| \of_n^{m_n} -f_n^{m_n} \| =0$, which implies that 
$\liminf \tau(\of_n^{m_n})$ $= \liminf \tau(f_n^{m_n})$ $>0$.
By Lemma 3.2 (1), we obtain $\te_n \in (B_{m_n}'\cap B_{l_n})_+^1$ such that $(\te_n)_n=(e_n)_n$ and $\lim d_{\tau} (\te_n)=0$. By Lemma 3.2 (2), we obtain $\tf_n \in (B_{m_n}'\cap B_{l_n})_+^1$ such that $(\tf_n)_n= (f_n)_n$ and 
\[\liminf_{n\rightarrow\infty} \od_{\tau} (\tf_n) \geq \liminf \tau (\of_n^{m_n})>0.\]

Taking a large $N \in \N$, we have that 
\[ d_{\Tr_n} (\te_n) = d_{\tau}(\te_n) < \od_{\tau}(\tf_n) = \od_{\Tr_n} (\tf_n),\ n\geq N,\]
where $\Tr_n$ is the normalized trace of $B_{m_n}' \cap B_{l_n}$.
Then, we obtain $s_n \in (B_{m_n}' \cap B_{l_n})^1$ such that 
$s_n^* s_n$ $= \te_n$, $\tf_n s_n= s_n$,
hence we have that  $(s_n)_n\in B_{\infty}$, $(s_n^* s_n)_n =(e_n)_n$, and $(f_n s_n)_n =(s_n)_n$. 
\eprf
 
By the above example and the following proposition, we see that the Jiang-Su algebra has the property (SI). This proposition is motivated by Lemma 3.3 in \cite{Mat2}.  

\begin{proposition}\label{Prop3.4}
Let $A$ be a unital $C^*$-algebra absorbing the Jiang-Su algebra $\Zj$ tensorially. If $A\otimes B$ has the property (SI) for any UHF algebra $B$, then $A$ also has the property (SI).
\end{proposition}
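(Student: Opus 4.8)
The plan is to reduce property (SI) for $A$ to property (SI) for $A\otimes B$ with $B$ a fixed UHF algebra of infinite type, and then to transport a solution back to $A$ using the $\Zj$-absorption of $A$ together with Lemma \ref{Lem3.2}.

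\medskip

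First I would set up the reduction. Fix an infinite-type UHF algebra $B$, say $B=M_{2^\infty}$, with unique trace $\tau_B$, and write $B=\varinjlim_j M_{k_j}$. Let $e_n,f_n\in A_+^1$ be (SI)-data for $A$: $(e_n)_n,(f_n)_n\in A_\infty$, $\lim_n\max_{\tau\in T(A)}\tau(e_n)=0$, and $\liminf_n\min_{\tau\in T(A)}\tau(f_n^n)>0$. Then $e_n\otimes1_B$ and $f_n\otimes1_B$ are (SI)-data for $A\otimes B$: they represent central sequences because $(e_n)_n,(f_n)_n\in A_\infty$ and the elementary tensors span $A\otimes B$; and since every tracial state of $A\otimes M_{k_j}$ has the form $\tau\otimes\Tr_{k_j}$ we get $T(A\otimes B)=\{\tau\otimes\tau_B:\tau\in T(A)\}$, whence $\max_{\sigma\in T(A\otimes B)}\sigma(e_n\otimes1_B)=\max_\tau\tau(e_n)\to0$ and $\liminf_n\min_\sigma\sigma((f_n\otimes1_B)^n)=\liminf_n\min_\tau\tau(f_n^n)>0$. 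By hypothesis $A\otimes B$ has property (SI), so there is $s_n\in(A\otimes B)^1$ with $(s_n)_n\in(A\otimes B)_\infty$, $(s_n^*s_n)_n=(e_n\otimes1_B)_n$ and $((f_n\otimes1_B)s_n)_n=(s_n)_n$.

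\medskip

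Next I would transport the solution back. Let $\Phi=\mathrm{id}_A\otimes\tau_B:A\otimes B\to A\otimes1_B\cong A$ be the (trace-preserving, $A$-bimodular) conditional expectation. Applying $\Phi$ to the relations above, the module property gives at once that $t_n:=\Phi(s_n)\in A^1$ satisfies $(t_n)_n\in A_\infty$ (a conditional expectation carries central sequences to central sequences) and $(f_nt_n)_n=(t_n)_n$, while the Schwarz inequality gives $(t_n^*t_n)_n\le(e_n)_n$ in $A_\infty$. Thus all the requirements of property (SI) for $A$ hold \emph{except} the exact equality $(t_n^*t_n)_n=(e_n)_n$; in general $\Phi$ is not multiplicative and $t_n^*t_n$ is strictly smaller than $e_n$.

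\medskip

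Recovering this last equality is, I expect, the main obstacle, and it is where the $\Zj$-absorption of $A$ must be used in an essential way (the reduction above used nothing about $A$). The scheme I would pursue is to ``fill in the gap'' iteratively: put $g_n:=e_n-t_n^*t_n\ge0$, note $(g_n)_n\in A_\infty$ and $\max_\tau\tau(g_n)\to0$, so $g_n$ is again (SI)-data, but first cut $f_n$ down to $\tf_n\le f_n$ whose support is, up to a vanishing-trace error, orthogonal to the left support of $t_n$ — this is possible because $\liminf_n\min_\tau\tau(f_n^n)>0$ while $\tau(t_nt_n^*)\le\tau(e_n)\to0$, and here Lemma \ref{Lem3.2}(3) and the strict-comparison consequences of $\Zj$-absorption are used to produce such an orthogonal piece while keeping $\liminf_n\min_\tau\tau(\tf_n^{\,n})>0$. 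Re-running the first two steps with $(g_n,\tf_n)$ in place of $(e_n,f_n)$ yields a correction $t_n'\in A$ with $(t_n')^*t_n'\le g_n$, $(\tf_nt_n')_n=(t_n')_n$, and, by the orthogonality of the cut, $((t_n+t_n')^*(t_n+t_n'))_n=(t_n^*t_n+(t_n')^*t_n')_n$, so the gap is strictly decreased. The delicate points — the ones I expect to require the most care — are (i) arranging that the successive corrections each recover a controlled (not merely infinitesimal) portion of the remaining gap, so that the telescoping sum converges in \emph{operator} norm rather than only in trace, and (ii) keeping all the left supports mutually orthogonal throughout; both are handled by invoking the projectionless analogue of property (SI) for $A\otimes B$ and Lemma \ref{Lem3.2} in tandem, which is exactly the machinery Section 3 has been built to supply.
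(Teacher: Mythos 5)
Your reduction step is fine and agrees with the paper: tensoring the (SI)-data with $1_B$ preserves both trace conditions because $T(A\otimes B)=\{\tau\otimes\tau_B\}$, and property (SI) of $A\otimes B$ produces $s_n$ with $(s_n^*s_n)_n=(e_n\otimes 1_B)_n$ and $((f_n\otimes 1_B)s_n)_n=(s_n)_n$. The genuine gap is in the transport back to $A$. The conditional expectation $\Phi=\id_A\otimes\tau_B$ is not multiplicative, and the loss $g_n=e_n-t_n^*t_n$ it creates is completely uncontrolled: for instance if $s_n=a\otimes u$ with $u\in U(B)$ and $\tau_B(u)=0$ then $\Phi(s_n)=0$ and the ``gap'' is all of $e_n$. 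Your iterative correction scheme does not repair this, because each correction $t_n'$ is again obtained by solving (SI) in $A\otimes B$ and then compressing by the same lossy expectation, so there is no definite fraction of $g_n$ recovered at each stage and no reason for the telescoping series to converge in norm. The points you flag as ``delicate'' are in fact where the argument breaks down, and neither Lemma \ref{Lem3.2} nor strict comparison supplies a lower bound on $\Phi(s_n)^*\Phi(s_n)$.

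The paper avoids the conditional expectation entirely. It takes \emph{two} UHF algebras of coprime ranks, $B^{(0)}=M_{k^\infty}$ and $B^{(1)}=M_{(k+1)^\infty}$, first uses Lemma \ref{Lem3.2}(3) to split $f_n$ into orthogonal pieces $f_n^{(0)}f_n^{(1)}=0$ with $(f_n^{(i)})_n\le (f_n)_n$ and $\liminf_n\min_\tau\tau((f_n^{(i)})^n)>0$, and solves (SI) separately in $A\otimes B^{(i)}$ for the data $(e_n\otimes 1, f_n^{(i)}\otimes 1)$, obtaining $s_n^{(i)}$. Because $s_n^{(i)}=f_n^{(i)}\otimes 1\cdot s_n^{(i)}$ asymptotically and $f_n^{(0)}f_n^{(1)}=0$, the images under the canonical embeddings $\Phi^{(i)}$ into $A\otimes B^{(0)}\otimes B^{(1)}$ satisfy $\Phi^{(0)}(s_n^{(0)})^*\Phi^{(1)}(s_n^{(1)})=0$, so the path
\[
s_n(t)=\cos(\pi t/2)\,\Phi^{(0)}(s_n^{(0)})+\sin(\pi t/2)\,\Phi^{(1)}(s_n^{(1)})
\]
lies in $A\otimes\Zjk$ and satisfies $(s_n^*s_n)_n=(e_n\otimes 1_{\Zjk})_n$ \emph{exactly} (the cross terms vanish and $\cos^2+\sin^2=1$), together with $(f_n\otimes 1_{\Zjk})_n(s_n)_n=(s_n)_n$. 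Finally, since $\Zjk$ embeds unitally into $\Zj$ and $A\cong A\otimes\bigotimes_{n}\Zj$, for the finite set $F=\{(e_n),(f_n)\}$ there is a unital embedding $\Phi_F:(A\otimes\Zjk)_\infty\hookrightarrow A_\infty$ with $\Phi_F\circ\iota=\id$ on $F$; applying it to $(s_n)_n$ transports the solution back to $A_\infty$ with all relations preserved, because it is a $*$-homomorphism rather than a positive map. This coprime-UHF/dimension-drop interpolation is the missing idea; without it (or some substitute that transports the solution by an honest embedding) your argument cannot close.
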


In order to prove the above proposition, we define the projectionless $C^*$-algebra $\Zjk$ for $k\in \N \setminus \{ 1\}$ by 
\begin{eqnarray}
\Zjk=& &\{ f\in C([0,1])\otimes M_{k^\infty}\otimes M_{(k+1)^{\infty}};\nonumber \\
 & &f(0)\in M_{k^{\infty}}\otimes 1_{{k+1}^{\infty}},\ f(1)\in 1_{k^{\infty}}\otimes M_{{k+1}^{\infty}} \}.\nonumber 
\end{eqnarray}
This projectionless $C^*$-algebra $\Zjk$ was introduced by R$\o$rdam and Winter in \cite{RW}, \cite{W1} as a mediator between $C^*$-algebras absorbing UHF algebras and $C^*$-algebras absorbing the Jiang-Su algebra. 

\bprf
 Suppose that $e_n$ and $f_n \in A_+^1$, $n\in \N$ satisfy the conditions in the property (SI).
Let $k$ be a natural number with $k\geq 2$, $B^{(i)}$ the UHF algebra of rank $(k+i)^{\infty}$, and $\Phi^{(i)}$  the canonical unital embeddings of $A \otimes B^{(i)}$ into $A\otimes B^{(0)} \otimes B^{(1)}$,  $i=0,1$. 

By Lemma \ref{Lem3.2} there exist $f_n^{(i)} \in A_+^1$, $i=0,1$, $n\in \N$, such that $(f_n^{(i)})_n \in A_{\infty}$, $(f_n^{(0)})_n (f_n^{(1)})_n =0 $, $(f_n^{(i)})_n  \leq   (f_n)_n$, and $\liminf_{n\rightarrow \infty}\min_{\tau\in T(A)} \tau( {f_n^{(i)}}^n)$ $\geq $ $\liminf_{n} \min_{\tau } \od _{\tau} (f_n^{(i)})$ $\geq$ $ \liminf_n\min_{\tau} \tau (f_n^n) /2$ $>0 $, $i=0,1$. 

Applying the property (SI) of $A\otimes B^{(i)} $ to $e_n \otimes 1_{(k+i)^{\infty}} $ and $ f_n^{(i)}\otimes 1_{(k+i)^{\infty}}$ $\in {A\otimes B^{(i)}}_+ ^1$ we obtain $s_n^{(i)} \in$ ${A\otimes B^{(i)} }^1$, $i=0,1$, $n\in \N$ such that $(s_n^{(i)})_n \in (A\otimes B^{(i)} )_{\infty}$,
\[ ({s_n^{(i)}}^*s_n^{(i)})_n = (e_n \otimes 1_{(k+i)^{\infty}}),\quad (f_n^{(i)} \otimes 1_{(k+i)^{\infty}} \cdot s_n^{(i)})_n = (s_n^{(i)}). \]
Note that $(\Phi^{(i)} ({s_n^{(i)}}^* s_n^{(i)}))_n$ $= (e_n \otimes 1_{k^{\infty}} \otimes 1_{(k+1)^{\infty}})_n$, $(f_n \otimes 1_{k^{\infty}} \otimes 1_{(k+1)^{\infty}})_n $ $\cdot (\Phi^{(i)} (s_n^{(i)}))_n$ $= ( \Phi^{(i)} (s_n^{(i)}))_n$
, and $(\Phi^{(0)} (s_n^{(0)}))_n ^* $ $ (\Phi^{(1)} (s_n^{(1)}))_n = 0 $. 

Define $s_n \in {A\otimes \Zjk }^1$, $n\in \N$ by
\[ s_n (t) = \cos (\pi t/2) \Phi^{(0)} (s_n^{(0)}) + \sin (\pi t /2) \Phi^{(1)} (s_n^{(1)}),\quad t\in [0,1]. \]
Since  
\begin{eqnarray}
(s_n^* s_n (t))_n &=&(\cos^2 (\pi t /2 ) \Phi^{(0)} ({s_n^{(0)}}^* s_n^{(0)}) + \sin^2 (\pi t/ 2) \Phi^{(1)}({s_n^{(1)}}^*s_n^{(1)}) \nonumber \\
&+& \cos \cdot \sin (\pi t /2) (\Phi^{(0)} (s_n^{(0)})^* \Phi^{(1)}(s_n^{(1)}) + \Phi^{(1)} (s_n^{(1)})^* \Phi^{(0)} (s_n^{(0)})))_n \nonumber \\
&=& (e_n \otimes 1_{k^{\infty}} \otimes 1_{(k+1)^{\infty}})_n, \quad t\in [0,1], \nonumber 
\end{eqnarray}
$(f_n\otimes 1_{k^{\infty}}\otimes 1_{(k+1)^{\infty}})_n$ $(s_n (t)) =$ $(s_n (t))$, $t\in [0,1]$, and $\Lip (s_n) = \pi$, $n\in \N$, it follows that 
\[ (s_n^* s_n)_n = (e_n \otimes 1_{\Zjk})_n, \quad (f_n\otimes 1_{\Zjk} )_n (s_n)_n = (s_n)_n. \]

Set $\iota: A_{\infty} \hookrightarrow (A\otimes \Zjk)_{\infty}$ by $\iota ((a_n)_n)= (a_n\otimes 1_{\Zjk})_n$. Since $\displaystyle A\cong$ $A\otimes \bigotimes_{n\in \N}$ $\Zj$ and $\Zjk \subset_{\rm unital} \Zj $, for any finite subset $F\subset A_{\infty}$, we obtain a unital embedding $\Phi_{F}:(A\otimes \Zjk)_{\infty} \hookrightarrow A_{\infty}$ such that $\Phi_{F} \circ \iota (x) = x$, $x\in F$.

Define $s= \Phi_{\{(e_n), (f_n)\}} ( (s_n)_n) \in A_{\infty}$, then we conclude that $s^*s =(e_n)$ and $(f_n)s=s$.
\eprf

\section{$\Zj$ absorption of Crossed products.}
In this section we  prove Theorem \ref{Thm1}.
We denote by $A_{\alpha}$ the fixed point algebra of $\alpha \in \Aut (A)$ and by $\alpha_{\infty}$ the automorphism of $A_{\infty}$ induced by $\alpha$. In the following Lemma \ref{Lem4.1}, mimicking Theorem 4.4 in \cite{HW}, we use the weak Rohlin property to obtain a set of elements in $(A_{\infty})_{\alpha_{\infty}}$ which satisfies the same relations as $\{ c_j\}_{j=1}^k$ in $\cR_k$. After that, applying the property (SI) and the weak Rohlin property, we obtain the generators of prime dimension drop algebras in $({A_{\infty}})_{\alpha_{\infty}}$.
\begin{lemma} \label{Lem4.1}
Let $A$ be a unital separable simple $C^*$-algebra which has a unique tracial state $\tau$ and absorbs the Jiang-Su algebra tensorially. Suppose that $\alpha\in \Aut (A)$ has the weak Rohlin property. 
Then for any $k \in \N$ there exist $c_{j,n} \in A$, $j=$ $1,2,...,k$, $n\in \N$ such that $(c_{j,n})_n\in (A_{\infty})_{\alpha_{\infty}}^1$, 
\[ (c_{1,n})_n \geqq 0,\quad (c_{i,n})_n(c_{j,n})^* =\delta_{i,j} (c_{1,n})^2, \]
\[\| (c_{1,n})_n \| =1 , \quad \|1-\sum_{j=1}^k (c_{j,n})_n^* (c_{j,n}) \| =1 ,\quad \lim_{n \rightarrow \infty} \tau (c_{1,n}^n) = 1/k .\]
(which implies $\lim_{n \rightarrow \infty} \tau (1_A-\sum_{j=1}^k c_{j,n}^*c_{j,n}) =0$.)
\end{lemma}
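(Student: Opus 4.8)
The plan is to pass to the tracial von Neumann completion, where the weak Rohlin property upgrades to a genuine cyclic Rohlin tower of projections, to build there an $\bar\alpha$--invariant copy of $M_k$, and then to pull the $\cR_k$--relations back to $A$ in a ``soft'' form. Let $M=\pi_\tau(A)''$ be the GNS von Neumann algebra of $\tau$; since $\tau$ is the unique, hence extremal, trace, $M$ is a $\mathrm{II}_1$ factor, and $A\cong A\otimes\Zj$ gives $M\cong M\bar\otimes R$ with $R$ the hyperfinite $\mathrm{II}_1$ factor, so $M$ is McDuff. As $\tau\circ\alpha=\tau$, the automorphism $\alpha$ extends to a trace--preserving $\bar\alpha\in\Aut(M)$. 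Fix a free ultrafilter $\omega$ on $\N$ and put $N:=M^\omega\cap M'$, a $\mathrm{II}_1$ factor because $M$ is McDuff, with the induced $\bar\alpha_\omega\in\Aut(N)$. Applying the weak Rohlin property with the given $k$ produces $f_n\in A_+^1$ with $(f_n)_n\in A_\infty$, $\alpha^j(f_n)f_n=0$ for $1\le j\le k-1$, and $\tau(1-\sum_{j=0}^{k-1}\alpha^j(f_n))\to0$. The class $\Phi:=(f_n)_n$ lies in $N$, the elements $E_j:=\bar\alpha_\omega^j(\Phi)=(\alpha^j(f_n))_n$ for $0\le j\le k-1$ are positive, of norm $\le1$ and pairwise orthogonal, and $\tau_\omega(\sum_{j}E_j)=1$; since $\sum_j E_j\le1$ and $\tau_\omega$ is faithful on $M^\omega$, we get $\sum_j E_j=1$, whence each $E_j$ is a projection (indeed $E_0=E_0\sum_j E_j=E_0^2$). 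Using $\bar\alpha_\omega(1)=1$ one sees $\bar\alpha_\omega(E_{k-1})=E_0$, so $E_0,\dots,E_{k-1}\in N$ are mutually orthogonal projections summing to $1$, of trace $1/k$ each, cyclically permuted by $\bar\alpha_\omega$; in particular $\tau_\omega(E_0)=1/k$, which is where the value $1/k$ in the conclusion comes from.

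Next one promotes this cyclic tower to an $\bar\alpha_\omega$--invariant unital copy of $M_k$ in $N$: pick an auxiliary system of matrix units $\{g_{ij}\}_{i,j=1}^k$ for $M_k$ in $N$ commuting with $\{E_j\}$; then $\bar\alpha_\omega(\{g_{ij}\})$ is another unital copy of $M_k$ with the same trace data, so $\bar\alpha_\omega(g_{ij})=u\,g_{ij}\,u^*$ for some unitary $u\in N$, and the cyclic Rohlin tower allows one to write $u$ as a coboundary $u=\bar\alpha_\omega(w)^*w$ for a unitary $w\in N$, whence $e_{ij}:=w\,g_{ij}\,w^*$ are $\bar\alpha_\omega$--invariant matrix units with $\sum_j e_{jj}=1$. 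Putting $c_j:=e_{1j}$ ($j=1,\dots,k$) gives elements of $N_{\bar\alpha_\omega}$ with $c_1=e_{11}\ge0$, $c_ic_j^*=\delta_{i,j}c_1^2$, $\|c_1\|=1$, $\sum_j c_j^*c_j=1$ and $\od_{\tau_\omega}(c_1)=\tau_\omega(e_{11})=1/k$.

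It remains to transport these elements from $N=M^\omega\cap M'$ to $A_\infty=A^\infty\cap A'$, that is, to produce representing sequences $c_{j,n}\in A$. Since $A$ is projectionless there are no honest projections or matrix units in $A^\infty$ either, so the $c_{j,n}$ must be chosen ``soft'': one takes $c_{j,n}\in A$ whose classes in the tracial ultrapower are the $c_j$ above, but which as elements of $A$ have full spectrum $[0,1]$, the ``non--projection part'' of $c_{1,n}$ having trace that goes to $0$ faster than $1/n$, so that $\tau(c_{1,n}^n)\to\tau_\omega(e_{11})=1/k$. A routine reindexing argument -- bootstrapping the relations $c_ic_j^*=\delta_{i,j}c_1^2$ from $\|\cdot\|_{2,\tau}$ to $\|\cdot\|$ via the divisibility supplied by $\Zj$--absorption, and using the weak Rohlin tower $(f_n)_n$ to keep the $c_{j,n}$ asymptotically $\alpha$--invariant -- then yields $(c_{j,n})_n\in(A_\infty)_{\alpha_\infty}$ with the stated relations and with $\|(c_{1,n})_n\|=1$; and $\sum_j c_j^*c_j=1$ in the ultrapower degrades exactly to $\|1-\sum_j(c_{j,n})_n^*(c_{j,n})_n\|=1$ in $A_\infty$, because the sup--norms of the $c_{j,n}$ stay $1$ while the limiting trace of $c_{1,n}$ is $1/k<1$.

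I expect the main obstacle to be the second step, namely producing the $\bar\alpha_\omega$--invariant copy of $M_k$ inside $N$: one has to show that the cyclic Rohlin tower of height $k$ trivializes the unitary cocycle $u$ for $\bar\alpha_\omega$ on the relative commutant of $\{E_j\}$, a standard but nontrivial point going back to Connes' treatment of automorphisms of the hyperfinite factor. The surrounding steps are comparatively soft: the reduction to the $\mathrm{II}_1$ factor is routine, the passage from the weak Rohlin property to a genuine cyclic tower is the short faithfulness argument above, and the final ``softening'' lift is a standard reindexing using $\Zj$--absorption.
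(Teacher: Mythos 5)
Your route through the tracial von Neumann completion is genuinely different from the paper's, which never leaves the $C^*$-level: the paper takes the explicit $\cR_k$-generators $c_n^{(j)}$ of $I(k,k+1)\subset\Zj$ from Section 2 (with $\cos$ replaced by $\cos^{1/n^2}$ so that $\tau_{\Zj}((c_n^{(1)})^n)\nearrow 1/k$), embeds them centrally into $A_\infty$ via $\Zj$-absorption, and then makes them approximately $\alpha_\infty$-invariant by a Berg/Kishimoto-type averaging $\varphi_n(x)=\sum_p a_{p,n}\,\alpha_\infty^p(\Phi(x))\,\alpha_\infty^p(f^{(n)})$ over a long weak Rohlin tower with a tent-function weight $a_{p,n}$; the exact relations $c_ic_j^*=\delta_{ij}c_1^2$ survive because the $\alpha_\infty^p(f^{(n)})$ are pairwise orthogonal and commute with everything in sight. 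Your first step (upgrading the weak Rohlin tower to an exact partition of unity by projections in $M^\omega\cap M'$ via faithfulness of $\tau_\omega$) is correct and is essentially how one interprets the weak Rohlin property in the GNS representation.

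The genuine gap is the final transfer from $N=M^\omega\cap M'$ back to $A_\infty=A^\infty\cap A'$, which you dismiss as ``a routine reindexing argument''. It is not: the natural map goes from $A_\infty$ \emph{into} the tracial central sequence algebra, and producing preimages of prescribed elements with prescribed \emph{operator-norm} behaviour is exactly the hard analytic content that the paper isolates as property (SI) -- and Lemma 4.1 is stated and proved \emph{without} assuming (SI), so you cannot invoke anything of that strength here. Concretely, your invariant matrix units $e_{ij}\in N$ satisfy $\sum_j e_{jj}=1$ and $e_{11}^2=e_{11}$, whereas the conclusion you must reach in $A_\infty$ has $\|1-\sum_j(c_{j,n})_n^*(c_{j,n})_n\|=1$ and $(c_{1,n})_n$ a non-projection of full spectrum; so the lift cannot be a perturbation of the von Neumann data in any trace-norm sense that controls operator norms, and the exact relations $(c_{i,n})_n(c_{j,n})_n^*=\delta_{ij}(c_{1,n})_n^2$ must be manufactured from scratch at the $C^*$-level. ``Bootstrapping from $\|\cdot\|_{2,\tau}$ to $\|\cdot\|$ via divisibility'' has no content as stated: $\Zj$-absorption alone does not convert trace-norm-small central sequences into norm-small ones (that conversion is precisely what (SI) asserts, and it is introduced as an extra hypothesis in Theorem 1.3 for good reason). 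The paper avoids this entirely by starting from elements that already satisfy the exact relations in norm inside $\Zj$ and only ever applying operations (conjugation by orthogonal central positive contractions, convex tent-weights along the tower) that preserve them. Your second step (trivializing the cocycle to get an invariant $M_k$ in $N$) is also left unproved, but it is at least a known von Neumann-algebraic argument; the transfer step is where the proposal, as written, does not constitute a proof.
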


\bprf
Let $\Phi_m$, $m\in \N$ be the unital embeddings of $\Zj$ into $A\otimes \bigotimes_{m\in \N} \Zj \cong A$ defined by 
\[ \Phi_m (x) = 1_{A} \otimes 1_{\bigotimes_{i=1}^m \Zj} \otimes x \otimes 1_{\bigotimes_{i=m+2}^{\infty} \Zj},\quad x\in \Zj,\]
and $\Phi$ be the unital embedding of $\Zj$ into $A_{\infty}$ defined by $\Phi (x) = (\Phi_m (x))_m$, $x\in \Zj$. Remark that $\tau (\Phi_m (x)) = \tau_{\Zj} (x)$, $m\in \N$, $x\in \Zj$. 

In the definition of $c_j\in I(k,k+1)$ in Section 2, replacing $\cos$ with $\cos^{1/n^2}$ we obtain $c_n^{(j)}\in I(k,k+1) \subset \Zj$, $j=1,2,...,k$ such that
\[ c_n^{(1)} \geqq 0, \quad c_n^{(i)}{c_n^{(j)}}^* =\delta_{i,j} {c_n^{(1)}}^2, \]
\[\|(c_n^{(1)})_n \| =1,\quad \| 1- \sum_{j=1}^k {c_n^{(j)}}^* c_n^{(j)} \| =1, \quad  \tau_{\Zj} ({c_n^{(1)}}^n) \nearrow 1/k .\]
Let $\varepsilon_n > 0 $, $n\in \N$ be such that $\varepsilon_n \searrow 0$ and  $\tau_{\Zj}({c_n^{(1)}}^n)$ $>$ $1/k -\varepsilon_n$, $n\in \N$. 

Let $k_n$ and $l_n\in \N$, $n\in \N$ be such that $l_n \nearrow \infty$ and $l_n^2 < k_n$. 
Since $\alpha\in \Aut (A)$ has the weak Rohlin property and $A$ and $\Zj$ are separable $C^*$-algebras, we obtain $f^{(n)} =$ $ (f_m^{(n)})_m$ $\in$ $(A\cup $ $\bigcup_{j\in \Z} \alpha_{\infty}^j (\Phi (\Zj)))'$ $\cap A^{\infty}$  such that $\| f^{(n)} \|$ $=$ $1$, and 
\begin{eqnarray}
& & \alpha_{\infty}^p (f^{(n)}) f^{(n)} =0 ,\quad  0< |p| \leq 2(k_n+l_n),  \nonumber\\
& & \tau ({f_r^{(n)}}^n) > 1/(2(k_n+l_n) +1)- \varepsilon_m,\quad r \geq m. \nonumber
\end{eqnarray}
Note that any subsequence of $(f_m^{(n)})_m$ satisfies the above conditions. Then, taking a subsequence of $(f_m^{(n)})_m$, we may suppose that 
\[ |\tau (\Phi_m({c_n^{(1)}}^n)\cdot {f_m^{(n)}}^n) - \tau_{\Zj} ({c_n^{(1)}}^n)\tau({f_m^{(n)}}^n)| < \varepsilon_m .\]

For $p\in \Z$, define $a_{p,n}\geq 0$ by 
\begin{eqnarray}
a_{p,n}=\left\{ \begin{array}{lll}
1 -(|p|-k_n)/l_n,\quad & k_n < |p| \leq k_n+l_n, \\
1, \quad  &   |p| \leq k_n , \\
0, \quad & k_n +l_n < |p|, \\
\end{array} \right.\nonumber
\end{eqnarray} 
and define completely  positive maps $\varphi_n :$ $\Zj \longrightarrow A_{\infty}$, by 
\[\varphi_n (x) = \sum_{|p|\leq k_n+l_n} a_{p,n} \alpha_{\infty}^p (\Phi (x)) \alpha_{\infty}^p (f^{(n)}) .\]
Then we have that $\| \alpha_{\infty} (\varphi_n (x)) - \varphi_n (x)  \|$ 
\begin{eqnarray}
&=&\| \sum_{|p| \leq k_n +l_n} (a_{p,n} - a_{p-1,n})\cdot \alpha_{\infty}^p (\Phi (x)) \alpha_{\infty}^p (f^{(n)}) \| \nonumber \\
&=& \| x \| /l_n, \quad x\in \Zj, \nonumber
\end{eqnarray}
\begin{eqnarray}
\varphi_n(c_n^{(i)})\varphi_n(c_n^{(j)})^* &=& \sum_{|p|\leq k_n + l_n} a_{p,n} ^2 \alpha_{\infty}^p (\Phi (c_n^{(i)}{c_n^{(j)}}^*)) \alpha_{\infty}^p (f^{(n)})^2 \nonumber \\
&=& \delta_{i,j} \varphi_n(c_n^{(1)})^2,\quad n\in \N, \nonumber
\end{eqnarray} 
$\|\varphi_n(c_n^{(1)})\| =1$, and $ \|1$ $-$ $\sum_{j=1}^k \varphi_n (c_n^{(j)})^*\varphi_n(c_n^{(j)})$ $\|=1$.

Let $c_{n,m}^{(j)}\in A^1$, $j=1$,$2$,...,$k$, $m\in\N$ be components of $\varphi_n(c_n^{(j)})$ (i.e., $(c_{n,m}^{(j)})_m$ $=$ $\varphi_n (c_n^{(j)}) \in A_{\infty}$) with $c_{n,m}^{(1)}$ $\geqq 0$, then we have that 
\begin{eqnarray}
\liminf_{m\rightarrow \infty} \tau ({c_{n,m}^{(1)}}^n)  &=& \liminf_m \sum_{|p|\leq k_n+l_n} a_{p,n}^n \tau(\alpha ^p(\Phi_m (c_n^{(1)})^n) \alpha^p (f_m^{(n)})^n) \nonumber \\
 &>& (2k_n+1) \liminf_m \tau(\Phi_m({c_n^{(1)}}^n) \cdot {f_m^{(n)}}^n) \nonumber \\
&=& (2k_n +1) \liminf_m \tau_{\Zj} ({c_n^{(1)}}^n) \tau ({f_m^{(n)}}^n) \nonumber \\ 
&>& \frac{(2k_n+1)}{(2(k_n+l_n) +1)} (1/k -\varepsilon_n) \rightarrow_{n\rightarrow \infty} 1/k, \nonumber 
\end{eqnarray}

Let $F_n$ be an increasing sequence of finite subsets of $A^1$ with $\overline{\bigcup_n F_n}= A^1 $. By the above conditions, we obtain an increasing sequence $m_n\in \N$, $n\in \N$ such that 
\begin{eqnarray}
& &\|[c_{n,m_n}^{(j)}, x ] \| < \varepsilon ,\quad x\in F_n, \nonumber\\
& &\| \alpha (c_{n,m_n}^{(j)}) - c_{n,m_n}^{(j)} \| < 1/l_n +\varepsilon_n, \nonumber \\
& &\| c_{n,m_n}^{(i)}{c_{n,m_n}^{(j)}}^* -\delta_{i,j} {c_{n,m_n}^{(1)}}^2 \| <\varepsilon_n,\ i,j=1,2,...,k,  \nonumber \\
& &\|c_{n,m_n}^{(1)} \| > 1 - \varepsilon_n,\quad \| 1- \sum_{j=1}^k {c_{n,m_n}^{(j)}}^* c_{n,m_n}^{(j)} \| > 1 -\varepsilon_n, \nonumber \\
& &\tau({c_{n,m_n}^{(1)}}^n) > \frac{2k_n +1}{2(k_n+l_n) +1} (1/k -\varepsilon_n). \nonumber 
\end{eqnarray}
Define $c_{j,n} = c_{n,m_n}^{(j)}$, $j=$ $1,2$ $,...,k$, then we have that 
$(c_{j,n})_n \in (A_{\infty})_{\alpha_{\infty}}$, 
\[(c_{1,n})_n \geqq 0,\quad (c_{i,n})_n(c_{j,n})^* = \delta_{i,j}(c_{1,n})^2,\]
\[\|(c_{1,n})_n \| =1, \quad \| 1- \sum_{j=1}^k (c_{j,n})_n^* (c_{j,n})\| = 1 , \quad \lim_{n\rightarrow \infty} \tau(c_{1,n}^n) = 1/k .\]\eprf

By the technique in the proof of Lemma 4.6 \cite{Kis2} we obtain a generator  $s$ in $(A_{\infty})_{\alpha_{\infty}}$ satisfying the relations in $\cR_k$ together with $\{(c_{j,n})_n\}$ above. In the proof of the following proposition, $x\approx_{\varepsilon} y$ means $\| x-y\| < \varepsilon $. 

\begin{proposition}\label{Prop4.2}
Let $A$ be a unital $C^*$-algebra which has a unique tracial state $\tau$, absorbs the Jiang-Su algebra $\Zj$ tensorially, and has the property (SI). 
Suppose that $\alpha \in \Aut (A)$ has the weak Rohlin property. 
Then for any $k\in \N$ there exists a set of norm-one elements $\{c_j\}_{j=1}^k \cup \{s\}$ in $(A_{\infty})_{\alpha_{\infty}} $ satisfying $\cR_k$.
\end{proposition}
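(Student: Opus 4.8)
The plan is to produce $s$ on top of the elements $\{c_j\}_{j=1}^{k}$ already supplied by Lemma~\ref{Lem4.1}, the new ingredients being property (SI) and a second use of the weak Rohlin property. To set things up, take $c_j=(c_{j,n})_n\in(A_{\infty})_{\alpha_{\infty}}$, $j=1,\dots,k$, as in Lemma~\ref{Lem4.1}, so that $c_1\geqq 0$, $c_ic_j^*=\delta_{i,j}c_1^2$, $\|c_1\|=1$, $\|1-\sum_{j=1}^{k}c_j^*c_j\|=1$ and $\lim_n\tau(c_{1,n}^{n})=1/k$; moreover, the explicit form of the maps $\varphi_n$ used in that proof shows $\sum_{j=1}^{k}c_j^*c_j\leq 1$ in $A_{\infty}$, so that $e:=1-\sum_{j=1}^{k}c_j^*c_j$ is a positive contraction in $(A_{\infty})_{\alpha_{\infty}}$ with $\|e\|=1$ (using $c_1=c_1^*$, $c_ic_j^*=\delta_{i,j}c_1^2$ and $c_jc_1=\delta_{j,1}c_1^2$ one also sees that $c_1$ commutes with $e$). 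Since $\tau(1_A-\sum_{j=1}^{k}c_{j,n}^*c_{j,n})\to 0$, I may fix a positive contraction representative $(e_n)_n$ of $e$ with $\tau(e_n)\to 0$. The reduction is now that it suffices to find $s\in(A_{\infty})_{\alpha_{\infty}}$ with $s^*s=e$ and $c_1s=s$: indeed then $\sum_{j=1}^{k}c_j^*c_j+s^*s=1$ and $c_1s=s$ hold alongside $c_1\geqq 0$ and $c_ic_j^*=\delta_{i,j}c_1^2$, i.e.\ $\{c_j\}_{j=1}^{k}\cup\{s\}$ satisfies $\cR_k$, and each of these elements has norm one ($\|c_j\|^2=\|c_1^2\|=1$ and $\|s\|^2=\|e\|=1$).

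Next I would invoke property (SI) of $A$ for the pair $e_n,\;f_n:=c_{1,n}$ in $A_+^1$: both give rise to central sequences, $\lim_n\tau(e_n)=0$, and $\liminf_n\tau(c_{1,n}^{n})=1/k>0$ by Lemma~\ref{Lem4.1}, so the hypotheses of Definition~\ref{SI} are satisfied. Property (SI) then yields $s_n\in A^1$ with $(s_n)_n\in A_{\infty}$, $(s_n^*s_n)=(e_n)$ and $(c_{1,n}s_n)=(s_n)$; setting $s=(s_n)_n$ gives $s^*s=e$ and $c_1s=s$ in $A_{\infty}$.

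The remaining --- and principal --- difficulty is that property (SI) places $s$ only in $A_{\infty}$, whereas we need $s\in(A_{\infty})_{\alpha_{\infty}}$. Because the data $(e_n)_n$ and $(c_{1,n})_n$ fed into (SI) are themselves $\alpha_{\infty}$-invariant, I would obtain the invariance of $s$ by running the comparison argument relative to the weak Rohlin property, in the spirit of the proof of Lemma~4.6 in \cite{Kis2} and of Theorem~4.4 in \cite{HW}. Concretely, for each height $p$ one selects, by a reindexation argument using separability of $A$, a weak Rohlin element of height $p$ lying in $A_{\infty}$ and commuting with $e$, $c_1$ and the data chosen so far; one solves $s^*s=e$, $c_1s=s$ on a single tooth of the associated tower and spreads the solution over the tower by $\alpha_{\infty}$-translation, the ``edge'' term $\alpha_{\infty}(s)-s$ being supported on two teeth and hence trace-negligible; a final application of property (SI) then absorbs this edge defect and produces an honest $s\in(A_{\infty})_{\alpha_{\infty}}$ with $s^*s=e$ and $c_1s=s$. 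Reconciling the finite height of the Rohlin tower with the requirement of exact $\alpha_{\infty}$-invariance is, I expect, the essential obstacle; granted $s$, the verification of $\cR_k$ and of the norm conditions is the routine one sketched above.
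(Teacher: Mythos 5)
Your overall architecture matches the paper's: take the $c_j$ from Lemma \ref{Lem4.1}, set $e=1-\sum_j c_j^*c_j$, and manufacture $s$ with $s^*s=e$ and $c_1s=s$ by combining property (SI) with a Rohlin averaging. The reduction to finding such an $s$ is correct, and your first application of (SI) (with $f_n=c_{1,n}$) is legitimate but ends up being discarded since, as you note, it gives no control over $\alpha_{\infty}$. The trouble is that the step you yourself flag as ``the essential obstacle'' is exactly where your sketch goes wrong, in two places. First, (SI) must be run not against $c_1$ but against the cut-down $g=c_1^{1/2}fc_1^{1/2}$, where $f=(f_m^{(l)})_m$ is a Rohlin element of height $l$ commuting asymptotically with $c_1$: one checks $\liminf_m\tau\bigl((g_m^{(l)})^m\bigr)\geq 1/(kl)>0$ (this uses uniqueness of the trace to factorize $\tau((c_1f)^m)\approx\tau(c_1^m)\,\tau(f^m)$ along a subsequence), and (SI) then yields $s'$ with $s'^*s'=e$ and, crucially, $fs'=s'$ (hence also $c_1s'=s'$). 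The relation $fs'=s'$ is what makes the translates $\alpha^p(s')$, $0\leq p<L$, mutually orthogonal, since $\alpha^p(s')^*\alpha^q(s')=\alpha^p(fs')^*\alpha^q(fs')\approx 0$ for $p\neq q$; without it the tower sum has uncontrollable cross terms, and your sketch never secures this property.

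Second, the tower sum must be normalized: with $s'^*s'=e$ on each tooth, $\sum_{p<L}\alpha^p(s')$ satisfies $S^*S\approx Le$, not $e$. The correct element is $s=L^{-1/2}\sum_{p<L}\alpha^p(s')$, which simultaneously restores $s^*s\approx e$ and makes the edge defect $\alpha(s)-s=L^{-1/2}\bigl(\alpha^{L}(s')-s'\bigr)$ of norm at most $2L^{-1/2}$. Exact $\alpha_{\infty}$-invariance is then obtained by letting $L=L_n\to\infty$ along a diagonal sequence, so that the defect vanishes in the quotient defining $A^{\infty}$. Your proposed mechanism --- that the edge defect is ``trace-negligible'' and is ``absorbed by a final application of (SI)'' --- does not work: (SI) is an existence statement producing, from trace data, an element $s$ with $s^*s=e$ and $fs=s$; it is not a device for correcting an almost-invariant element into an invariant one, and trace-smallness of $\alpha(s)-s$ at a fixed tower height does not make it zero in $A_{\infty}$. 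Until the orthogonality of translates and the $L^{-1/2}$ normalization are in place, the construction of $s$ is not complete.
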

\bprf
By Lemma \ref{Lem4.1} we obtain $c_m^{(j)} \in A ^1$, $j=$ $1,2,...,k$, $m\in\N$ such that $(c_m^{(j)})_m \in (A_{\infty})_{\alpha_{\infty}}$, $(c_m^{(1)})_m\geq 0$, $(c_m^{(i)})_m(c_m^{(j)})^*=\delta_{i,j} (c_m^{(1)})^2$, $\|(c_m^{(j)})_m\|$ $=1$, $\| 1- $ $\sum_{j=1}^k (c_m^{(j)})^*$ $(c_m^{(j)}) \|$ $ =1$, $\lim_{m \rightarrow \infty}$ $\tau ({c_m^{(1)}}^m)$ $ = 1/k $, and $\lim_{m\rightarrow \infty}$ $\tau (1 - \sum {c_m^{(j)}}^* c_m^{(j)}) $ $=0$, where $\tau$ is a unique tracial state of $A$. Let $\varepsilon_m > 0$, $m\in \N$ be such that $\varepsilon_m \searrow 0$ and $\tau({c_m^{(1)}}^m) \geq$ $1/k -\varepsilon_m$.
 
Because of the weak Rohlin property of $\alpha \in \Aut (A)$ we obtain $f_m^{(l)}\in A_+^1$, $l, m\in \N$, such that $(f_m^{(l)})_m \in A_{\infty}$ and 
\begin{eqnarray}
& &\alpha_{\infty}^p ((f_m^{(l)})_m) (f_m^{(l)})=0,\ p=1,2,...,l-1, \nonumber \\
& &  \| [f_r^{(l)}, c_m^{(1)} ] \| < \varepsilon_m,\ r\geq m, \nonumber \\
& &\tau ({f_r^{(l)}}^m) > 1/l- \varepsilon_m,\ r\geq m. \nonumber
\end{eqnarray}
Note that any subsequence of $(f_m^{(l)})_m$ satisfies the above conditions.
Since
 $\tau$ is the unique tracial state, taking a subsequence of $(f_m^{(l)})_m $ we may suppose that $\tau ((c_m^{(1)} f_m^{(l)})^m)$ $\approx_{\varepsilon_m}$ $\tau ( {c_m^{(1)}}^m {f_m^{(l)}}^m)$ $\approx_{\varepsilon_m}$ $ \tau ({c_m^{(1)}}^m) \tau({f_m^{(l)}}^m )$ $\geq 1/(kl) - 2\varepsilon_m$. 
Set 
\[g_m^{(l)} = {c_m^{(1)}}^{1/2} f_m^{(l)} {c_m^{(1)}}^{1/2} \in A_+^1,\quad m\in \N, \] then we have that $(g_m^{(l)})_m \in {A_{\infty}}_+^1$, $l\in \N$ and $\displaystyle \liminf_{m\rightarrow \infty}$ $\tau ({g_m ^{(l)}}^m)$ $\geq 1/(kl)$.

By the property (SI) of $A$, we obtain $\ps_m^{(l)} \in A^1$, $m\in \N$, such that 
$(\ps_m^{(l)})_m \in A_{\infty}$, $({\ps_m^{(l)}}^* \ps_m^{(l)})$ $=$ $(1- \sum_{j=1}^k {c_m^{(j)}}^* c_m^{(j)})$, and $ (g_m^{(l)} \ps_m^{(l)})$ $=(\ps_m^{(l)})$.
Remark that $(g_m^{(l)}) =$ $(f_m^{(l)}) (c_m^{(1)}) \leq$ $(f_m^{(l)})$, $(g_m^{(l)}) \leq (c_m^{(1)})$,  $ (f_m^{(l)}) (\ps_m^{(l)})$ $= (\ps_m^{(l)})$, and $(c_m^{(1)})_m (\ps_m^{(l)})$ $=(\ps_m^{(l)})$.
Let $L_n\in \N$ be such that $2L_n^{-1/2} < \varepsilon_n$ ($L_n\nearrow \infty$) and 
define 
\[ s_m^{(L_n)} = L_n^{-1/2} \sum_{p=0}^{L_n -1} \alpha^p (\ps_m^{(L_n)}). \]
Then we have that 
\[\| \alpha (s_m^{(L_n)}) - s_m^{(L_n)} \| \leq 2 L_n^{-1/2} < \varepsilon_n,\quad m\in\N.\]
Let $m_n \in \N$, $n\in \N$ be an increasing sequence with $m_n \nearrow \infty$ such that $(s_{m_n}^{(L_n)})_n \in A_{\infty}$, 
$\|s_{m_n}^{(L_n)} \|$ $\leq 1 + \varepsilon_n$, $\|f_{m_n}^{(L_n)}\ps_{m_n}^{(L_n)}$ $- \ps_{m_n}^{(L_n)} \| <$ $ \varepsilon_n / L_n$, $\| \alpha^p (f_{m_n}^{(L_n)})$ $f_{m_n}^{(L_n)} \|$ $<  \varepsilon_n/ L_n$, $\| {\ps_{m_n}^{(L_n)}}^* \ps_{m_n}^{(L_n)} -$ $(1 - \sum_{j=1}^k {c_{m_n}^{(j)}}^* c_{m_n}^{(j)}) \|$ $ \leq \varepsilon_n$,  $ \| \alpha^p (c_{m_n}^{(j)}) -$ $c_{m_n}^{(j)} \|$ $<  \varepsilon_n /(2k)$, $j=1,2,...,k$, $p=1,2,...,L_n -1$, and $\|c_{m_n}^{(1)} s_{m_n}^{(L_n)} -$ $s_{m_n}^{(L_n)} \|$ $< \varepsilon_n/ L_n$, and set $s_n = s_{m_n}^{(L_n)}$. 

Then we have that $\alpha_{\infty} ((s_n)_n) = (s_n)\in A_{\infty}$,  
\begin{eqnarray} 
s_n^* s_n&\approx_{2\varepsilon_n } & L_n^{-1} ( \sum_{p=0}^{L_n-1} \alpha^p ({\ps_{m_n}^{(L_n)}}^* f_{m_n}^{(L_n)}))(\sum_{q=0}^{L_n-1} \alpha^q (f_{m_n}^{(L_n)} \ps_{m_n}^{(L_n)})) \nonumber \\
&\approx_{\varepsilon_n} & L_n^{-1}\sum_{p=0}^{L_n-1} \alpha^p ({\ps_{m_n}^{(L_n)}}^* {f_{m_n}^{(L_n)}}^2 \ps_{m_n}^{(L_n)}) \nonumber \\
&\approx_{\varepsilon_n} & L_n^{-1}\sum \alpha^p ({\ps_{m_n}^{(L_n)}}^*  \ps_{m_n}^{(L_n)}) \nonumber \\
&\approx_{\varepsilon_n} & L_n^{-1}\sum \alpha^p (1-\sum_{j=1}^k {c_{m_n}^{(j)}}^* c_{m_n}^{(j)}) \nonumber \\
&\approx_{\varepsilon_n} & 1-\sum_{j=1}^k {c_{m_n}^{(j)}}^* c_{m_n}^{(j)}, \nonumber 
\end{eqnarray}   
$\|(s_n)_n\| =1 $, and $(c_{m_n}^{(1)})(s_n) =(s_n)$. Hence we conclude that $\{ (c_{m_n}^{(j)})_n\}_{j=1}^k \cup \{ (s_n)_n \}$ $\subset (A_{\infty})_{\alpha_{\infty}}^1$ and they satisfy the relations $\cR_k$. \eprf

{\it Proof of Theorem \ref{Thm1}}
 Applying Proposition 2.2 in \cite{TW} to $(A\times_{\alpha} \Z)_{\infty}$ it suffices to show the following Lemma.
\begin{lemma}\label{Lem4.7}
Let $A$ be a unital separable projectionless $C^*$-algebra which has a unique tracial state and absorbs the Jiang-Su algebra tensorially. Suppose that $A$ has property (SI) and  $\alpha \in \Aut (A) $ has the weak Rohlin property. Then for any $k\in \N$ there exists a unital embedding $\Phi_k$ of $I(k,k+1)$ into $(A_{\infty})_{\alpha_{\infty}}$.
\end{lemma}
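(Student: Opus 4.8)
The plan is to derive Lemma~\ref{Lem4.7} by feeding the generators supplied by Proposition~\ref{Prop4.2} into Corollary~\ref{Cor2.1}. First, Proposition~\ref{Prop4.2} produces norm-one elements $\{c_j\}_{j=1}^k\cup\{s\}$ in $(A_{\infty})_{\alpha_{\infty}}$ satisfying the relations $\cR_k$. As $(A_{\infty})_{\alpha_{\infty}}$ is a unital $C^*$-algebra, with unit the class of the constant sequence $1_A$, Corollary~\ref{Cor2.1}, applied with ``$A$'' replaced by $(A_{\infty})_{\alpha_{\infty}}$ and with $(\bc_i,\bs)=(c_i,s)$, yields a unital embedding $\Phi_k\colon I(k,k+1)\hookrightarrow(A_{\infty})_{\alpha_{\infty}}$ sending $c_i\mapsto c_i$ and $s\mapsto s$, \emph{provided} we check the remaining hypothesis of that corollary, namely $\spec(|s|)=[0,1]$. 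This check is genuinely the content of the lemma: the relations $\cR_k$ alone are also satisfied by the endpoint representation $I(k,k+1)\to M_{k+1}$, where $|s|$ is a partial isometry with $\spec(|s|)=\{0,1\}$.

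Since $|s|^2=s^*s=1-\sum_{j=1}^kc_j^*c_j$ is a positive contraction, $\spec(|s|)\subseteq[0,1]$, and both endpoints are forced by norm data carried through Lemma~\ref{Lem4.1} and Proposition~\ref{Prop4.2}: $\|1-\sum_jc_j^*c_j\|=1$ gives $1\in\spec(|s|^2)$, while $c_1\geqq0$ with $\|c_1\|=1$ gives $1\in\spec(c_1^2)$, whence (using $c_1^2\le\sum_jc_j^*c_j\le1$) $1\in\spec(\sum_jc_j^*c_j)$, so $0\in\spec(1-\sum_jc_j^*c_j)=\spec(|s|^2)$.

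For the full interval I would reopen the construction behind Proposition~\ref{Prop4.2} and Lemma~\ref{Lem4.1}. There the $c_j$ arise by applying the embedding $\Phi\colon\Zj\hookrightarrow A_{\infty}$ and then the completely positive maps $\varphi_n$ --- weighted sums over an $\alpha$-Rohlin tower whose legs $\alpha^p(f^{(n)})$ are pairwise orthogonal positive contractions, with $f^{(n)}$ in the commutant of $\bigcup_j\alpha_{\infty}^j(\Phi(\Zj))$ --- to the elements $c_n^{(j)}\in I(k,k+1)\subset\Zj$ obtained by replacing $\cos$ with $\cos^{1/n^2}$; here one computes directly that $b_n:=1-\sum_j(c_n^{(j)})^*c_n^{(j)}$ is the function $t\mapsto(1-\cos^{2/n^2}(\pi t/2))\,1_k\otimes e_{k+1}^{(k+1)}$, so $\spec_{\Zj}(b_n)=[0,1]$; and the orbit-averaging that constructs $s$ in Proposition~\ref{Prop4.2} leaves $|s|^2=1-\sum_jc_j^*c_j$ unchanged. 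Because $\Phi$ preserves spectra exactly, and because the orthogonality of the tower legs together with $f^{(n)}\in(\bigcup_j\alpha_{\infty}^j(\Phi(\Zj)))'$ gives $1-\sum_j\varphi_n(c_n^{(j)})^*\varphi_n(c_n^{(j)})=1-\sum_p a_{p,n}^2\,\alpha_{\infty}^p\bigl((1-\Phi(b_n))(f^{(n)})^2\bigr)$ --- a sum of mutually orthogonal positive pieces, one per leg --- and because on a full-weight leg ($a_{p,n}=1$) the piece $(1-\alpha_{\infty}^p(\Phi(b_n)))\,\alpha_{\infty}^p((f^{(n)})^2)$ is a product of commuting positives with joint spectrum $[0,1]\times\spec((f^{(n)})^2)$ (using that $\Zj$ is simple and $f^{(n)}$ is in its commutant) while $1\in\spec((f^{(n)})^2)$, it follows that $1-\sum_j\varphi_n(c_n^{(j)})^*\varphi_n(c_n^{(j)})$ has spectrum $[0,1]$ in $(A_{\infty})_{\alpha_{\infty}}$ for every $n$. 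Finally, in the proof of Lemma~\ref{Lem4.1} one picks the cut-down indices $m_n$ so that, besides the conditions imposed there, the spectra of the finite-stage representatives are $(1/n)$-dense in $[0,1]$; then $|s|^2$ has spectrum $[0,1]$ in the central sequence algebra, so $\spec(|s|)=[0,1]$, and Corollary~\ref{Cor2.1} finishes the proof of Lemma~\ref{Lem4.7}, hence of Theorem~\ref{Thm1}.

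I expect the main obstacle to be precisely this propagation of the spectral identity through the two approximations involved: $\varphi_n$ is only asymptotically multiplicative, and the representatives converge to the central-sequence objects only in the limit. Concretely one must show that the joint spectrum of $\Phi(b_n)$ and $(f^{(n)})^2$ is the full product $[0,1]\times\spec((f^{(n)})^2)$ --- which rests on the simplicity of $\Zj$ and $f^{(n)}\in\Phi(\Zj)'$, so that $C^*(\Phi(\Zj),f^{(n)})\cong\Zj\otimes C^*(f^{(n)})$ --- and, at the finite stage, that two approximately commuting positive elements, one of spectrum $[0,1]$ and the other of norm close to $1$, have approximate joint spectral values arbitrarily close to $[0,1]\times\{1\}$, so that spectral density survives passage to the quotient. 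The rest is routine bookkeeping on top of Proposition~\ref{Prop4.2} and Corollary~\ref{Cor2.1}.
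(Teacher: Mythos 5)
Your skeleton is exactly the paper's: take the norm-one generators $\{c_j\}_{j=1}^k\cup\{s\}$ of $\cR_k$ from Proposition~\ref{Prop4.2}, check $\spec(|s|)=[0,1]$, and invoke Corollary~\ref{Cor2.1}; and your derivation of $\{0,1\}\subset\spec(|s|)$ from $\|c_1\|=1$ and $\|1-\sum_j c_j^*c_j\|=1$ is correct and is what the paper does. But for the interior of the spectrum you miss the one idea the paper actually uses, and it is signalled by the hypotheses: \emph{projectionless} appears in Lemma~\ref{Lem4.7} but not in Proposition~\ref{Prop4.2}, yet your argument never uses it. The paper's point is that no propagation through the construction is needed: if $\spec(|s|)$ omitted a point of $(0,1)$, continuous functional calculus across the gap would produce a projection $p=\chi(|s|)\in A_\infty$ with $p\neq 0$ (because $1\in\spec(|s|)$) and $p\neq 1$ (because $0\in\spec(|s|)$), contradicting projectionlessness. (Strictly, since $A^{\infty}$ of a projectionless unital $A$ still contains projections represented by $\{0,1_A\}$-valued sequences $(q_n)$, one should add a line: $q_n=1_A$ eventually is excluded because $\tau(s_n^*s_n)=\tau(1-\sum_j c_{j,n}^*c_{j,n})\to 0$ while $|s_n|\geq(\lambda-\e)1$ would force $\tau(s_n^*s_n)$ bounded below, and $q_n=0$ eventually is excluded because it forces $\limsup\|s_n\|\leq\lambda+\e<1=\|(s_n)_n\|$. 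This uses only data already recorded in Lemma~\ref{Lem4.1}/Proposition~\ref{Prop4.2}.)

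Your substitute route --- reopening Lemma~\ref{Lem4.1}, computing $\spec(b_n)=[0,1]$ in $I(k,k+1)$, and pushing joint spectra through $\varphi_n$ --- is not only much heavier than necessary but has an unclosed hole exactly where you flag ``the main obstacle.'' The step ``pick the cut-down indices $m_n$ so that the spectra of the finite-stage representatives are $(1/n)$-dense in $[0,1]$'' does not follow from the limit element having spectrum $[0,1]$: for a positive $(x_m)_m\in A^{\infty}$ one has $\spec((x_m)_m)=\bigcap_N\overline{\bigcup_{m\geq N}\spec(x_m)}$, so each point of $[0,1]$ is approximated by points of $\spec(x_m)$ for infinitely many $m$, but the relevant $m$ may depend on the point; e.g.\ $\spec(x_m)=\{0,1,q_m\}$ with $(q_m)$ an enumeration of $\Q\cap[0,1]$ gives limit spectrum $[0,1]$ while no single $\spec(x_m)$ is even $1/4$-dense. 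So as written the proposal does not establish $\spec(|s|)\supset(0,1)$. Replacing that entire second half by the projectionlessness argument above closes the gap and recovers the paper's proof.
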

\bprf
By Proposition \ref{Prop4.2} we obtain a set of norm one generators $\{ c_j\}_{j=1}^k \cup \{ s\} $ in $(A_{\infty})_{\alpha_{\infty}}$ satisfying $\cR_k$. 
Since $|s|$ and $c_1$ are norm one elements we have that $\spec (|s|) \supset \{0,1\}$, and since $A_{\infty}$ is a projectionless $C^*$-algebra we have that $\spec (|s|) =[0,1]$. Then, by Corollary \ref{Cor2.1}, we conclude the above statement.
\eprf

\section{Stability for automorphisms }

In this section, using the weak Rohlin property, we show the stability for automorphisms of the Jiang-Su algebra, Theorem \ref{Thms}. 

First, we recall the generalized determinant introduced by P. de la Harpe and G. Skandalis (see \cite{HS}, \cite{KM}, \cite{Pim}). Let $A$ be a unital $C^*$-algebra with a unique tracial state $\tau$. For any piecewise differentiable path $\xi$ $:[0,1]\rightarrow U(A)$, we set 
\[\wtDelta_{\tau}(\xi)= \frac{1}{2\pi\im} \int_0^1 \tau(\dot{\xi}(t)\xi^* (t)) dt \in \R.\]
When $\xi(0)$ $=$ $\xi(1)$ $=1$ we have that $\wtDelta_{\tau} (\xi) \in \tau (K_0(A))$.
For any $u\in U_0(A)$, there exists a piecewise differentiable path $\xi_u:$ $[0,1]\rightarrow U(A)$ such that $\xi_u (0)=1$, $\xi_u(1)=u$. The generalized determinant $\Delta_{\tau}$ associated with the tracial state $\tau$ is the map from $U_0(A)$ to $\R/\tau(K_0(A))$ defined by $\Delta_{\tau}(u)$ $=\wtDelta(\xi_u) + \tau(K_0(A))$. Remark that $\Delta_{\tau}$ is a group homomorphism.

Mimicking the proof of Lemma 6.2 in \cite{KM} we prove the following proposition. Hereinafter, we let $\log$ be the standard branch defined on the complement of the negative real axis.

\begin{proposition}\label{Prop5.1}
Let $B$ be the UHF algebra of rank $k^{\infty}$, where $k$ $\in\N\setminus \{ 1\}$, $\tau$ the unique tracial state of $B$, $\beta\in\Aut (B)$, and $u_n\in U(B)$, $n\in \N$. Suppose that $\beta \in \Aut (B)$ has the Rohlin property and 
\[ \Delta_{\tau} (u_n) =0, \quad {\it for\ any\ }n\in \N.\]
Then there exist $v_n\in U(B)$, $n\in \N$ such that $(v_n)_n\in B_{\infty}$,
\[(v_n\beta(v_n)^*)_n = (u_n)_n, \quad \tau\circ \log (v_n\beta (v_n)^* u_n^*)=0,\ {\it for\ any\ }n\in \N. \]
\end{proposition}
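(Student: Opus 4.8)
The plan is to follow the strategy of Lemma 6.2 in \cite{KM}, exploiting the Rohlin property of $\beta$ to build the $v_n$ by a telescoping-cocycle construction, and then to correct the resulting unitaries so that the generalized determinant condition becomes the pointwise vanishing of $\tau\circ\log$. First I would fix $n$ and, using the Rohlin property of $\beta$ on $B$, obtain for large $R$ a Rohlin system of projections $\{e_i^{(0)}\}_{i=0}^{R-1}\cup\{e_i^{(1)}\}_{i=0}^{R}$ (a $(R,R+1)$-tower) that is approximately central and approximately permuted by $\beta$; these projections will be essentially orthogonal to $u_n$ up to a controlled error. The natural ansatz is to set $v_n$ to be a product (over the two towers) of partial telescopes of $u_n$ along the tower, i.e. $v_n \approx \sum_{i} (\,u_n\,\beta(u_n)\,\cdots\,\beta^{i-1}(u_n)\,)\, e_i^{(\cdot)} + (\text{base terms})$, chosen so that $v_n\beta(v_n)^*$ telescopes back to $u_n$ on all but the top floor of each tower, which has trace $O(1/R)$. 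Since $\beta$ nearly permutes the $e_i$ and $u_n$ nearly commutes with them, $v_n\beta(v_n)^*\approx u_n$ in norm, and by passing to a subsequence/diagonal we arrange $(v_n)_n\in B_\infty$ and $(v_n\beta(v_n)^*)_n=(u_n)_n$ exactly.

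The second half of the argument handles the determinant normalization. Once we have $(v_n\beta(v_n)^*)_n=(u_n)_n$, the unitary $w_n := v_n\beta(v_n)^* u_n^*$ lies in $U_0(B)$ (indeed close to $1$ in norm for large $n$, so $\log w_n$ is well defined with small norm), and $\tau\circ\log(w_n)$ is a real number that we want to kill. The key point is that $\Delta_\tau(v_n\beta(v_n)^*)=\Delta_\tau(v_n)-\Delta_\tau(\beta(v_n))=0$ in $\R/\tau(K_0(B))=\R/\Z[1/k]$ because $\beta$ fixes the unique trace and hence commutes with $\Delta_\tau$; combined with $\Delta_\tau(u_n)=0$, this gives $\Delta_\tau(w_n)=0$, i.e. $\frac{1}{2\pi i}\tau\circ\log(w_n)\in\Z[1/k]$. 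Since for large $n$ this quantity is small, it must be exactly $0$ — so $\tau\circ\log(w_n)=0$ automatically for $n$ large, and the finitely many remaining $n$ can be adjusted by hand. Alternatively, one multiplies $v_n$ by a suitable central unitary of the form $\exp(2\pi i h_n)$ with $h_n$ self-adjoint of small norm and $\beta$-invariant-up-to-error to absorb any residual $\tau\circ\log$ contribution without disturbing $(v_n\beta(v_n)^*)_n=(u_n)_n$.

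The main obstacle I expect is the first step: producing $v_n$ with $v_n\beta(v_n)^*$ genuinely equal to $u_n$ in the sequence algebra rather than merely close. The telescoping identity only closes up exactly if the Rohlin tower is exactly $\beta$-permuted and $u_n$ exactly commutes with it, which never happens at finite stage; so one must (i) run the telescoping construction to get $v_n'$ with $\|v_n'\beta(v_n')^* - u_n\|$ as small as we like, (ii) observe that $v_n'\beta(v_n')^* u_n^*$ is then a unitary near $1$, and (iii) use a standard perturbation/functional-calculus correction — e.g. write $v_n'\beta(v_n')^* u_n^* = \exp(i a_n)$ and absorb $a_n$ by replacing $v_n'$ with $\exp(-\tfrac{i}{2}\beta^{-1}(a_n))\,v_n'$ or similar, iterating if necessary — to achieve equality in $B_\infty$. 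One must also check the spectral/logarithm manipulations are compatible with the determinant bookkeeping, using that $\log(xy)=\log x+\log y$ holds under the trace when the relevant spectra avoid the negative axis (as in \cite{KM}); this is where the careful choice of the branch of $\log$ announced before the statement is used. Finally, throughout one uses that $\tau(K_0(B))=\Z[1/k]$ is a dense subgroup of $\R$, so smallness does not by itself force vanishing — it is the combination of smallness with the \emph{integrality} $\Delta_\tau(w_n)=0$ that does.
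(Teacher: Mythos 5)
Your first step (re-deriving the existence of $v_n'$ with $(v_n'\beta(v_n')^*)_n=(u_n)_n$ by a two-tower telescoping argument) is fine in principle, but the paper simply quotes this as Kishimoto's stability theorem for Rohlin automorphisms of UHF algebras \cite{Kis1}; the real content of the proposition is the normalization $\tau\circ\log(v_n\beta(v_n)^*u_n^*)=0$, and that is where your argument breaks down. You correctly deduce that $\frac{1}{2\pi\im}\tau\circ\log(w_n)\in\tau(K_0(B))=\Z[1/k]$ for $w_n=v_n'\beta(v_n')^*u_n^*$, and that this number tends to $0$. But $\Z[1/k]$ is \emph{dense} in $\R$ (you note this yourself in your last sentence), so ``small and in $\Z[1/k]$'' does not imply ``zero'': the value can perfectly well be $k^{-l_n}m_n$ with $m_n$ prime to $k$ and $l_n\to\infty$. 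Your primary conclusion that the quantity ``must be exactly $0$'' for large $n$ is therefore unjustified, and your closing sentence contradicts the body of your argument without resolving the issue.

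Your fallback --- multiplying $v_n'$ by an approximately central unitary $c_n=\exp(2\pi\im h_n)$ with $h_n$ self-adjoint of small norm --- provably cannot absorb the residue: for such $c_n$ the coboundary $c_n\beta(c_n)^*$ is close to $1$ and, by additivity of $\tau\circ\log$ near $1$ (Lemma \ref{Lem5.1}(1)) together with $\tau\circ\beta=\tau$, one gets $\tau\circ\log(c_n\beta(c_n)^*)=2\pi\im\,\tau(h_n-\beta(h_n))=0$. So no correction of that form changes $\tau\circ\log(w_n)$ at all. The missing idea, which is what the paper's proof supplies, is this: write $-\frac{1}{2\pi\im}\tau\circ\log(w_n)=k^{-l_n}m_n$ with $(m_n,k)=1$, take an exact Rohlin tower $\{(\Ad z_n\circ\beta)^j(p_n)\}_{j=0}^{k^{l_n}-1}$ of length $k^{l_n}$ with $(z_n)_n=1$, and set $\ov_n=\sum_{j}\exp(2\pi\im jk^{-l_n}m_n)\,(\Ad z_n\circ\beta)^j(p_n)$. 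This $\ov_n$ is approximately central but \emph{not} close to $1$; its coboundary $\ov_n\,\Ad z_n\circ\beta(\ov_n)^*$ is the scalar $\lambda_n=\exp(2\pi\im k^{-l_n}m_n)\to 1$, so $v_n=v_n'\ov_n$ still satisfies $(v_n\beta(v_n)^*)_n=(u_n)_n$ while $\tau\circ\log$ is shifted by exactly $2\pi\im k^{-l_n}m_n$, killing the residue. Without this tower construction (or an equivalent device producing a coboundary equal to a prescribed scalar in $\exp(2\pi\im\,\Z[1/k])$ near $1$), the proposition is not proved.
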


The following lemma was essentially proved in \cite{HS}.
\begin{lemma}\label{Lem5.1}
Let $A$ be a unital $C^*$-algebra with a unique tracial state $\tau$.

\item[(1)]\label{(5.1)}
For $u_1$, $u_2$ $\in U(A)$ with, $\|u_i-1\|<1/2$, $i=1,2$ it follows that 
\[\tau\circ \log (u_1u_2)= \tau \circ \log (u_1) + \tau \circ \log (u_2).\]
\item[(2)]\label{(5.2)}
For $u_1$, $u_2$, and $v$ $\in U(A)$ with $ \| u_1 - u_2 \| < 1/2 $ and $\|v-1\| < 1/4$, it follows that 
\[ \tau \circ \log (u_1 v u_2^* v^*) = \tau \circ \log (u_1 u_2^*).\]
\end{lemma}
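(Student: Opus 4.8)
The plan is to prove (1) directly, by interpolating along an explicit path and differentiating $\tau\circ\log$, and then to obtain (2) as a formal consequence of (1).

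For (1), since $\log$ is the principal branch and $\|u_i-1\|<1/2<1$, the spectrum of $u_i$ lies in $\{z:|z-1|<1/2\}\subset\T\setminus\{-1\}$, so $a:=\log u_1$ and $b:=\log u_2$ are well-defined skew-adjoint elements, and from $\{z\in\T:|z-1|<1/2\}=\{e^{\im\theta}:|\theta|<2\arcsin(1/4)\}$ one gets $\|a\|,\|b\|<2\arcsin(1/4)<1$. I would set $W(s)=e^{sa}e^{sb}$ for $s\in[0,1]$, so $W(0)=1$ and $W(1)=u_1u_2$; since $s\mapsto\|e^{sa}-1\|$ is nondecreasing on $[0,1]$, one has $\|W(s)-1\|\le\|e^{sa}-1\|+\|e^{sb}-1\|\le\|u_1-1\|+\|u_2-1\|<1$ for every $s$, so $\sigma(W(s))$ stays in a fixed compact subset of $\C\setminus(-\infty,0]$ and $L(s):=\log W(s)$ is a smooth skew-adjoint path. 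The analytic point is the identity $\frac{d}{ds}\tau(\log W(s))=\tau(W'(s)W(s)^{-1})$: from $W(s)=\exp L(s)$ and Duhamel's formula $W'(s)=\int_0^1 e^{\sigma L(s)}L'(s)e^{(1-\sigma)L(s)}\,d\sigma$, multiplying on the right by $W(s)^{-1}=e^{-L(s)}$ and applying $\tau$ collapses the integrand, by the trace property, to $\tau(L'(s))=\frac{d}{ds}\tau(\log W(s))$. A direct computation gives $W'(s)W(s)^{-1}=a+e^{sa}be^{-sa}$, whose trace is $\tau(a)+\tau(b)$, independent of $s$; integrating over $[0,1]$ yields $\tau\circ\log(u_1u_2)=\tau(a)+\tau(b)=\tau\circ\log(u_1)+\tau\circ\log(u_2)$.

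For (2), I would factor $u_1vu_2^*v^*=(u_1u_2^*)(u_2vu_2^*v^*)$. Here $\|u_1u_2^*-1\|=\|u_1-u_2\|<1/2$ and $\|u_2vu_2^*v^*-1\|=\|u_2vu_2^*-v\|=\|[u_2,v-1]\|\le 2\|v-1\|<1/2$, so (1) applies and gives $\tau\circ\log(u_1vu_2^*v^*)=\tau\circ\log(u_1u_2^*)+\tau\circ\log(u_2vu_2^*v^*)$. Factoring again, $u_2vu_2^*v^*=(u_2vu_2^*)v^*$ with $\|u_2vu_2^*-1\|=\|v-1\|<1/2$ and $\|v^*-1\|=\|v-1\|<1/2$, so (1) gives $\tau\circ\log(u_2vu_2^*v^*)=\tau\circ\log(u_2vu_2^*)+\tau\circ\log(v^*)$. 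Finally $\log(u_2vu_2^*)=u_2(\log v)u_2^*$ and $\log(v^*)=-\log v$ (both from the functional calculus, using $-1\notin\sigma(v)$), so by the trace property $\tau\circ\log(u_2vu_2^*)+\tau\circ\log(v^*)=\tau(\log v)-\tau(\log v)=0$; combining the two displays proves (2).

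The only genuinely delicate point is the bookkeeping in (1) that keeps $W(s)$ uniformly within distance strictly less than $1$ of $1$, which is exactly what makes $L(s)=\log W(s)$ well-defined and smooth along the whole path and what dictates the thresholds $1/2$ and $1/4$ in the hypotheses; once that is secured, the smoothness of $s\mapsto\log W(s)$ coming from the holomorphic functional calculus and the interchange of $\tau$ with the Duhamel integral are routine.
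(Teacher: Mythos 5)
Your proof is correct, and both halves land where the paper's do, but part (1) is organized differently. The paper picks self-adjoint logarithms $h_1,h_2,h_3$ with $e^{2\pi\im h_i}=u_i$ and $e^{2\pi\im h_3}=u_1u_2$, builds the two paths $u(t)=e^{2\pi\im th_1}e^{2\pi\im th_2}$ and $w(t)=e^{2\pi\im th_3}$, exhibits a fixed-endpoint homotopy between them, and then invokes the homotopy invariance of the de la Harpe--Skandalis integral $\int_0^1\tau(\dot{\xi}\xi^*)\,dt$ to equate the two winding integrals. You instead run a single path $W(s)=e^{sa}e^{sb}$ and differentiate $\tau\circ\log W(s)$ directly, using Duhamel's formula and the trace property to identify the derivative with $\tau(W'W^{-1})=\tau(a)+\tau(b)$; this is self-contained where the paper leans on the cited invariance, at the cost of the estimate $\|W(s)-1\|\le\|e^{sa}-1\|+\|e^{sb}-1\|<1$, which you rightly secure via monotonicity of $s\mapsto\|e^{sa}-1\|$ (the cruder bound $\|e^{sa}-1\|\le s\|a\|$ would not quite suffice, since $2\cdot 2\arcsin(1/4)>1$). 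For (2) the two arguments are essentially the same reduction to (1): the paper factors through $U_1=v^*u_1vu_1^*$ and $U_2=u_1u_2^*$, so that $U_1U_2=v^*u_1vu_2^*$ is only a unitary conjugate of the target and an implicit $\Ad v$ step is needed at the end; your factorization $u_1vu_2^*v^*=(u_1u_2^*)(u_2vu_2^*v^*)$ hits the target directly and is marginally cleaner. The norm bookkeeping $\|[u_2,v-1]\|\le 2\|v-1\|<1/2$, which is exactly where the hypothesis $\|v-1\|<1/4$ enters, matches the paper's.
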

\bprf 
{(1)} Let $h_i\in A_{\rm sa}$ be such that $\exp(2\pi \im h_i) =u_i$, $i=1,2$,  and $h_3 \in A_{\rm sa}$ be such that $\exp(2\pi \im h_3) =u_1 u_2$. 
Set $u(t) =$ $\exp (2\pi$ $\im t h_1 )$ $\cdot\exp (2\pi\im  t h_2 )$, $w (t) = \exp (2\pi \im t h_3 )$, $t\in [0,1]$.
Since $\|1-u(t)\|<1$, $\|1-w(t)\| < 1 $, and $\|1-w^* u (t) \| < 2$, $t \in [0,1]$, we can define $h$ $\in C([0,1]) \otimes A_{\rm sa}$ by $h(t)= \log (w^* u(t))$, $t\in [0,1]$, then $u$ and $w$ are homotopic, by $H(s,t) = w (t) \exp((1-s)h(t))$ with fixed endpoints $H(s,0)=1$ and $H(s,1)=w(1)$. Hence, we have that  
\begin{eqnarray}
 \tau \circ \log (u_1u_2) &=& {2\pi \im} \tau (h_3)= \int_{0}^{1} \tau(\dot{w}  w^* (t)) dt = \int_{0}^{1} \tau(\dot{u} u^* (t)) dt \nonumber \\
&=& {2\pi \im} \tau (h_1 + h_2) = \tau \circ \log (u_1) + \tau \circ \log (u_2). \nonumber 
\end{eqnarray}
{(2)} Set $U_1= v^* u_1 v u_1^*$, $U_2 = u_1 u_2^*$, then it follows that $\|U_i -1 \| < 1/2$, $i=$ $1,2$. Applying (1), since $\tau\circ\log(U_1)$ $= \tau\circ\log(v^*)$ $+ \tau\circ\log(u_1vu_1^*)$ $=0$ we have that $\tau\circ\log(U_1U_2)$ $=\tau\circ\log(U_1)$ $+\tau\circ\log(U_2)$ $= \tau \circ\log(U_2)$. 
\eprf

\begin{proof}[proof of Proposition \ref{Prop5.1}] 
Because $\beta \in \Aut (B)$ has the Rohlin property in \cite{Kis1}, there exist $\pv_n \in U(B)$, $n\in\N$ such that $(\pv_n)_n$ $\in B_{\infty}$, and 
\[ (\pv_n\beta(\pv_n)^*)_n = (u_n) .\]
By the assumption and   
\begin{eqnarray}
& &\frac{1}{2\pi\im} \tau \circ \log (\pv_n \beta(\pv_n)^*u_n^*) + \tau (K_0 (B)) \nonumber \\
& &=\Delta_{\tau} (v_n' \beta(v_n')^* u_n^*) = \Delta_{\tau} (v_n'\beta(v_n')^* ) - \Delta_{\tau}(u_n) = - \Delta_{\tau}(u_n), \nonumber
\end{eqnarray}
we have that 
\[\frac{1}{2\pi\im} \tau \circ \log (\pv_n \beta(\pv_n)^*u_n^*)\in \tau(K_0 (B)),\ n\in \N.\]

Since $B$ is the UHF algebra of rank $k^{\infty}$, we obtain $l_n\in \N$ and $m_n\in \Z$ such that $(m_n,k)=1$ and 
\[k^{-l_n}m_n = -\frac{1}{2\pi\im} \tau \circ \log (\pv_n\beta(\pv_n)^*u_n^*) \in \tau (K_0 (B)).\]
Set $\lambda_n$ $=$ $\exp (2\pi \im k^{-l_n} m_n)$, then we have that $\lambda_n \rightarrow 1$, by $(\pv_n$ $\beta(\pv_n)^*$ $u_n^*)_n=1$. By the Rohlin property of $\beta \in \Aut (B)$, there exist $p_n \in P(B)$ and $z_n \in U(B)$, $n\in \N$ such that $(p_n)_n \in B_{\infty}$, $(z_n)_n=1_{B_{\infty}}$, and 
\[ \sum_{j=0}^{k^{l_n}-1}
(\Ad z_n\circ \beta)^j (p_n) = 1_{B}. \]
Define
\begin{eqnarray}
\ov_n &=& \sum_{j=0}^{k^{l_n}-1} \exp (2\pi\im jk^{-l_n}m_n) \cdot (\Ad z_n\circ \beta)^j (p_n), \nonumber \\
v_n &=& \pv_n \ov_n \in U(B),\ n\in \N.\nonumber
\end{eqnarray}
Taking a subsequence of $(p_n)_n$, we may suppose that $(\ov_n)_n$ $\in B_{\infty}$. Then it follows that $(v_n)_n$ $\in B_{\infty}$. 
By the definition of $\ov_n$ we have that $\ov_n\Ad z_n \circ \beta(\ov_n)^*$ $=\lambda_n$ and 

\[(v_n\beta(v_n)^* u_n^*)_n=(v_n\Ad z_n \circ \beta(v_n^*)u_n^*)_n=  (\lambda_n \pv_n\beta(\pv_n)^* u_n^*)_n=1. \]
And, by Lemma \ref{Lem5.1}, we have that $\tau\circ\log(v_n\beta(v_n)^*u_n^*)$
\begin{eqnarray}
&=& \tau\circ\log(v_n\Ad z_n\circ \beta(v_n)^* u_n^*) \nonumber \\ 
&=& \tau \circ \log(\ov_n \Ad z_n \circ \beta (\ov_n)^* \Ad z_n \circ \beta(\pv_n)^* u_n^* \pv_n) \nonumber \\
&=& 2\pi\im k^{-l_n}m_n + \tau \circ \log(\pv_n \beta (\pv_n)^*u_n^*)=0,\ n \in \N.  \nonumber
\end{eqnarray}
\end{proof}

\begin{theorem}\label{Thms}
Suppose that $\sigma \in \Aut(\Zj)$ has the weak Rohlin property and $u_n\in U(\Zj)$, $n\in \N$ satisfy $(u_n)_n \in \Zj_{\infty}$ and that
\[\Delta_{\tau_{\Zj}}(u_n) =0,\ {\it for\ any\ } n\in\N. \]
Then
 there exist $v_n \in U(\Zj)$, $n\in \N$ such that $(v_n)_n \in \Zj_{\infty}$ and 
\[(v_n \sigma(v_n)^*)_n = (u_n)_n.\]
\end{theorem}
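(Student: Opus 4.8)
The plan is to reduce to the UHF situation of Proposition \ref{Prop5.1} at the two ``endpoints'' of a dimension–drop algebra, glue the two resulting cocycle–correcting unitaries across the interval of $\Zjk$, and finally transport the glued unitary back into $\Zj$ using strong self-absorption. Fix $k\geq 2$ and set $B^{(i)}=M_{(k+i)^{\infty}}$, $i=0,1$. Since $\Zj$ absorbs every strongly self-absorbing $C^*$-algebra, $\Zj\otimes B^{(i)}\cong B^{(i)}$, and under this identification I consider $\beta^{(i)}=\sigma\otimes\id_{B^{(i)}}\in\Aut(B^{(i)})$. The first point is that $\beta^{(i)}$ has the Rohlin property: the weak Rohlin property of $\sigma$ forces $\sigma^{m}$ to be non-inner in the weak closure $\pi_{\tau_{\Zj}}(\Zj)''$ (the hyperfinite $I\hspace{-.1em}I_1$ factor) for every $m\neq 0$, hence $\beta^{(i)m}=\sigma^{m}\otimes\id$ is non-inner in the weak closure of $B^{(i)}$, so $\beta^{(i)}$ is aperiodic and has the Rohlin property by Kishimoto \cite{Kis1}. (Alternatively one upgrades directly the positive-element towers furnished by the weak Rohlin property to projection towers, using real rank zero and strict comparison in $B^{(i)}$.)

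Next, the unital embedding $\Zj\hookrightarrow\Zj\otimes B^{(i)}\cong B^{(i)}$, $a\mapsto a\otimes 1$, is trace preserving, and $\tau_{\Zj}(K_0(\Zj))=\Z\subseteq\Z[\tfrac1{k+i}]=\tau_{B^{(i)}}(K_0(B^{(i)}))$, so the hypothesis $\Delta_{\tau_{\Zj}}(u_n)=0$ gives $\Delta_{\tau_{B^{(i)}}}(u_n\otimes 1_{B^{(i)}})=0$ for every $n$. Applying Proposition \ref{Prop5.1} to $\beta^{(i)}$ and the unitaries $u_n\otimes 1_{B^{(i)}}$, we obtain $v_n^{(i)}\in U(B^{(i)})=U(\Zj\otimes B^{(i)})$, $n\in\N$, such that $(v_n^{(i)})_n$ is central in $(\Zj\otimes B^{(i)})_{\infty}$, $(v_n^{(i)}\beta^{(i)}(v_n^{(i)})^{*})_n=(u_n\otimes 1_{B^{(i)}})_n$, and $\tau_{B^{(i)}}\circ\log\bigl(v_n^{(i)}\beta^{(i)}(v_n^{(i)})^{*}(u_n\otimes 1)^{*}\bigr)=0$ for all $n$.

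\emph{The gluing and the return to $\Zj$.} Work in $\Zj\otimes\Zjk$ with the automorphism $\sigma\otimes\id_{\Zjk}$; regarding elements of $\Zjk$ as sections over $[0,1]$, I want a continuous family $t\mapsto\hat v_n(t)\in U(\Zj\otimes M_{k^{\infty}}\otimes M_{(k+1)^{\infty}})$ with $\hat v_n(0)=v_n^{(0)}\otimes 1_{(k+1)^{\infty}}$, $\hat v_n(1)=1_{k^{\infty}}\otimes v_n^{(1)}$, and $\sup_{t}\|\hat v_n(t)(\sigma\otimes\id)(\hat v_n(t))^{*}-u_n\otimes 1\|\to 0$; this produces $\hat v_n\in U(\Zj\otimes\Zjk)$ with $(\hat v_n)_n$ central in $(\Zj\otimes\Zjk)_{\infty}$ and $(\hat v_n(\sigma\otimes\id)(\hat v_n)^{*})_n=(u_n\otimes 1_{\Zjk})_n$. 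Writing $\hat v_n(t)=(v_n^{(0)}\otimes 1)\,w_n(t)$, the required condition amounts to asking $w_n$ to be a path of approximately $(\sigma\otimes\id)$-fixed unitaries from $1$ to the transition unitary $z_n=(v_n^{(0)}\otimes 1)^{*}(1_{k^{\infty}}\otimes v_n^{(1)})$; a short computation with the cocycle relations shows that $(z_n)_n$ is genuinely fixed by $(\sigma\otimes\id)_{\infty}$, and the two conditions $\tau\circ\log(\cdots)=0$ of Proposition \ref{Prop5.1} are exactly what make the generalized determinant of $(z_n)_n$ vanish, so that $z_n$ is joined to $1$ by a short path of (approximately) fixed unitaries realizable inside $\Zj\otimes\Zjk$. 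Finally, since $\Zjk\subset_{\rm unital}\Zj$ and $\Zj$ is strongly self-absorbing and has property (SI), Lemma \ref{Lem4.7} (applied to $A=\Zj$) provides a unital copy of $\Zjk$ inside the fixed point central sequence algebra $(\Zj_{\infty})_{\sigma_{\infty}}$; feeding this through the tensor-flip construction of Proposition \ref{Prop3.4} yields a unital embedding $\Psi:(\Zj\otimes\Zjk)_{\infty}\hookrightarrow\Zj_{\infty}$ that intertwines $(\sigma\otimes\id_{\Zjk})_{\infty}$ with $\sigma_{\infty}$ and satisfies $\Psi\bigl((u_n\otimes 1_{\Zjk})_n\bigr)=(u_n)_n$, and taking $v_n$ to be a representative of $\Psi((\hat v_n)_n)$ gives $(v_n)_n\in\Zj_{\infty}$ with $(v_n\sigma(v_n)^{*})_n=(u_n)_n$. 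I expect the gluing step to be the main obstacle: proving that the vanishing log conditions make $z_n$ contractible among (approximately) $(\sigma\otimes\id)$-fixed unitaries, and carrying this out with estimates uniform in $n$ so that the glued family $(\hat v_n)_n$ is central and stays within $\Zj\otimes\Zjk$.
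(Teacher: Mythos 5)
Your overall strategy coincides with the paper's: reduce to the two UHF fibres $B^{(i)}$ via Proposition \ref{Prop5.1}, build a path of unitaries over $[0,1]$ joining $\Phi^{(0)}(V_n^{(0)})$ to $\Phi^{(1)}(V_n^{(1)})$ inside $\Zj\otimes\Zjk$ which is an approximate $(\sigma\otimes\id)$-cocycle for $u_n\otimes 1$ uniformly in $t$, and then transport back into $\Zj_{\infty}$ through a $\sigma_{\infty}$-equivariant unital copy of $\Zjk$ supplied by Lemma \ref{Lem5.2}. The endpoint reductions are correct and are exactly what the paper does: the Rohlin property of $\sigma\otimes\id_{B^{(i)}}$ (via aperiodicity in the trace representation and Kishimoto's theorem), the inclusion $\tau_{\Zj}(K_0(\Zj))\subset\tau_{B^{(i)}}(K_0(B^{(i)}))$ which transfers the determinant hypothesis, and the observation that the transition unitary $z_n=U_{n,1}=\Phi^{(0)}(V_n^{(0)})^*\Phi^{(1)}(V_n^{(1)})$ satisfies $(z_n(\sigma\otimes\id)(z_n)^*)_n=1$ with $\tau\circ\log(z_n(\sigma\otimes\id)(z_n)^*)=0$, the latter being precisely why Proposition \ref{Prop5.1} must deliver the extra $\tau\circ\log$ normalization and not merely the cocycle identity.

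However, the step you yourself flag as ``the main obstacle'' is a genuine gap, and it is where essentially all of the work in the paper's proof lives. The inference ``the determinant of $(z_n)_n$ vanishes, hence $z_n$ is joined to $1$ by a short path of approximately $(\sigma\otimes\id)$-fixed unitaries'' is not a formal consequence of the determinant computation; vanishing determinant only gives a Lipschitz-bounded path $\tU_n$ from $1$ to $z_n$ in $U(\Zj\otimes B^{(0)}\otimes B^{(1)})$ with no equivariance whatsoever. What the paper actually does is: form the coboundary paths $\tT_n^{(j)}=\tU_n\,(\id\otimes\sigma^j\otimes\id)(\tU_n)^*$, invoke Lemma \ref{Lem5.4} (the Katsura--Matui two-variable unitary extension with uniform Lipschitz control --- this is where the hypotheses $\wtDelta_{\tau}=0$ and $\tau\circ\log=0$ are consumed) to produce homotopies $y_n^{(j)}$ with $\Lip(y_n^{(j)})<c$ independent of $n$, and then average these over a Rohlin tower of length $k^{l}$ for $\sigma\otimes\id_{B^{(0)}\otimes B^{(1)}}$ to manufacture a unitary $\tW_{l_n,m_n,n}$ with $\|\tW_{l_n,m_n,n}(\id\otimes\sigma\otimes\id)(\tW_{l_n,m_n,n})^*-\tT_n^{(1)}\|<c/k^{l_n}$; the desired path is then $\tV_n=\Phi^{(0)}(V_n^{(0)})\,\tW_{l_n,m_n,n}^*\,\tU_n$. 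Making this work requires the quantitative interplay $k^{2l_n}\|1-z_n(\sigma\otimes\id)(z_n)^*\|\to 0$ between the tower length and the rate at which $(z_n)_n$ becomes fixed, together with the uniform Lipschitz bounds; none of this mechanism is supplied, or even identified, in your proposal, so the gluing step cannot be accepted as done. Everything before and after that point is sound.
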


The following lemma is a direct adaptation of Proposition 4.6 in \cite{KM}.
\begin{lemma}\label{Lem5.4}
For any $c>0$ there exists $c'>0$ such that the following holds. 
Let $B$ be a UHF algebra, $\tau$ the unique tracial state of $B$. Suppose that $\wtu_n$ $\in U(C([0,1])\otimes B)$, $n\in \N$ satisfies that $(\wtu_n)_n \in (C([0,1])\otimes B)_{\infty} $, $(\wtu_n (i))_n=1$, $i=0,1$, 
\[\wtDelta_{\tau} (\wtu_n)=0,\quad \tau\circ\log (\wtu_n(i))=0,\ i=0,1,\ n\in \N,\]
 and $\Lip (\wtu_n) <c$, $n\in \N$. Then there exist $y_n \in U(C([0,1]^2)\otimes B )$, $n\in \N$ such that $(y_n)_n \in (C([0,1]^2)\otimes B)_{\infty}$, 
\[y_n(0,t) =1_B,\quad y_n(1,t)=\wtu_n(t),\ t\in [0,1],\]
\[ y_n(s,i)=\exp(\log(\wtu_n(i))s),\ i=0,1,\ s\in[0,1],\]
 and $\Lip(y_n)<c'$, $n\in\N$. 
\end{lemma}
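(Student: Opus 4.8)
The statement is a \emph{Lipschitz-controlled contractibility} result: an approximately central, $c$-Lipschitz ``loop'' $\wtu_n$ in $U(C([0,1])\otimes B)$ with vanishing de la Harpe--Skandalis invariant (and the endpoint normalisation $\tau\circ\log\wtu_n(i)=0$) is to be contracted to the constant unitary $1_B$ through a homotopy $y_n$ on the square whose Lipschitz constant is bounded purely in terms of $c$. I would follow \cite[Prop.~4.6]{KM}, adapted to the coefficient algebra $C([0,1])$, in the steps below.

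\emph{Reduction to genuine loops.} Put $h_n^{(i)}=\log\wtu_n(i)$; then $h_n^{(i)}$ is skew-adjoint with $\|h_n^{(i)}\|<\pi$, $\tau(h_n^{(i)})=0$, and $(h_n^{(i)})_n=0$. Let $w_n$ be the path in $U(B)$ concatenating $s\mapsto\exp(-h_n^{(0)}(1-2s))$ on $[0,\tfrac12]$ with $s\mapsto\exp(-h_n^{(1)}(2s-1))$ on $[\tfrac12,1]$, so $w_n(i)=\wtu_n(i)^{-1}$ and $\Lip(w_n)<2\pi$. Since $\wtDelta_\tau$ is additive over products and concatenations (by traciality) and $\wtDelta_\tau(w_n)=\tfrac{1}{2\pi\im}\bigl(\tau(h_n^{(0)})-\tau(h_n^{(1)})\bigr)=0$, the loop $\wtu_n':=\wtu_n w_n$ is genuine ($\wtu_n'(i)=1_B$), satisfies $\wtDelta_\tau(\wtu_n')=0$, and has $\Lip(\wtu_n')\le c+O(1)$. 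Any null-homotopy $\widehat y_n$ of $\wtu_n'$ that is constant $1_B$ on $\partial([0,1]^2)$ off the top edge is converted to the required $y_n$ via $y_n(s,t)=\widehat y_n(s,t)m_n(s,t)$, where $m_n$ is a controlled homotopy with $m_n(0,t)=1_B$, $m_n(1,t)=w_n(t)^{-1}$, and $m_n(s,i)=\exp(h_n^{(i)}s)$ (its Lipschitz constant is $<\pi$, and $o(1)$ once $\wtu_n(i)\to 1_B$); this reproduces exactly the boundary values prescribed in the statement.

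\emph{Passage to a matrix algebra and the special unitary group.} As $B=\overline{\bigcup_m M_{k_m}}$ and $(\wtu_n')_n$ is central, $\wtu_n'$ is approximated within $o(1)$ by $v_n\in U(C([0,1])\otimes M_{k_n})$ with $v_n(i)=1$, $\Lip(v_n)<c+o(1)$; moreover $v_n$ may be taken null-homotopic in $U(M_{k_n})$, because $\wtu_n'$ is null-homotopic in $U(B)$ (as $\wtDelta_\tau(\wtu_n')=0$ and $\tau$ is injective on $K_0(B)$) and $\pi_1(U(M_{k_n}))\to\pi_1(U(B))$ is injective. Writing $\det v_n(t)=\exp(\im\theta_n(t))$ for the continuous lift and using $|\dot\theta_n|=|\Tr(\dot v_n v_n^*)|\le k_n\Lip(v_n)$, vanishing winding gives $\theta_n(1)=\theta_n(0)$ and $\Lip(\theta_n/k_n)\le c+o(1)$; hence, after an innocuous constant adjustment (multiplying $z_n$ below by a root of unity tending to $1$), $v_n=\mu_n z_n$ with $\mu_n$ a scalar-valued based loop ($\|\mu_n-1\|$ bounded, $\Lip(\mu_n)=O(c)$) and $z_n$ a based loop in $SU(M_{k_n})$ of Lipschitz constant $O(c)$. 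The scalar loop $\mu_n$ contracts linearly in its exponent with Lipschitz constant $O(c)$, so, taking the pointwise product of this with a contraction of $z_n$ and reassembling (the perturbations of both reductions being absorbed into the final constant, and the finitely many small $n$ where $\wtu_n(i)$ is not near $1_B$ causing no trouble since $c'$ may be enlarged), the whole problem comes down to contracting $z_n$.

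\emph{The main obstacle.} What remains is the technical heart of \cite[Prop.~4.6]{KM}: a null-homotopic loop $z_n$ in $SU(M_{k_n})$ of Lipschitz constant $O(c)$ admits a contraction on the square whose Lipschitz constant is bounded \emph{independently of the matrix size $k_n$} --- a delicate point, since the rank $k_n-1$ is unbounded. The expected route is to partition $[0,1]$ into $N=O(c)$ subintervals $[t_{\ell-1},t_\ell]$ on each of which $z_n$ moves by less than $\tfrac14$, to write $z_n(t)=\exp(a_{\ell,n}(t))z_n(t_{\ell-1})$ there with $a_{\ell,n}$ skew-adjoint, $a_{\ell,n}(t_{\ell-1})=0$, $\|a_{\ell,n}\|<\tfrac12$, $\Lip(a_{\ell,n})=O(c)$, to homotope each piece (rel endpoints, with $O(c)$ control) to the straight exponential arc $s\mapsto\exp(\sigma(s)a_{\ell,n}(t_\ell))z_n(t_{\ell-1})$, and then to contract the resulting closed $N$-gon of short geodesic arcs to a point; it is precisely this last contraction whose $k_n$-uniform Lipschitz bound is the point that must be supplied along the lines of \cite{KM}. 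Propagating the constants back through the two reductions then yields the universal $c'$.
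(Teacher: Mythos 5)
Your overall strategy (normalise the endpoints, then contract the resulting loop with Lipschitz control) is reasonable, and the reduction of the boundary conditions via the correcting path $w_n$ and the auxiliary homotopy $m_n$ does reproduce the prescribed values of $y_n$ on $\partial([0,1]^2)$. But there is a genuine gap: the lemma requires $(y_n)_n\in (C([0,1]^2)\otimes B)_{\infty}$, i.e.\ the null-homotopies must again form a \emph{central} sequence, and your argument never addresses this; worse, your reduction points in the wrong direction for it. You approximate $\wtu_n'$ by $v_n\in U(C([0,1])\otimes M_{k_n})$ and then build the whole contraction inside $M_{k_n}$. Centrality survives an $o(1)$ perturbation of the boundary data, but the \emph{interior} of your homotopy consists of new elements of $C([0,1]^2)\otimes M_{k_n}$ for which nothing forces $\|[y_n(s,t),b]\|\to 0$ for a fixed $b\in B$: a contraction of a loop in $SU(M_{k_n})$ can perfectly well pass through unitaries that fail to commute with a fixed matrix unit of $M_{k_1}$. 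The paper's proof avoids this by using centrality to perturb the boundary unitary into $U(C(\partial E)\otimes(B_{m_n}'\cap B))$ for a slowly increasing $m_n$ --- i.e.\ into the \emph{relative commutant} of a large finite-dimensional subalgebra, which is again a UHF algebra with a unique trace --- so that the extension produced there automatically asymptotically commutes with every element of $B$. You would need to insert this step and re-verify the $K_1$ and trace hypotheses there (they do survive).

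A second, lesser point concerns where the technical weight is placed. The paper does not reprove the Lipschitz-controlled extension at all: it encodes the four prescribed edge values as a single unitary $U_n$ on the boundary $\partial E$ of the square and quotes Proposition 4.6 of \cite{KM} verbatim (extension from $\partial E$ to $[0,1]^2$ over an AF algebra with $c'$ depending only on $c$), the hypothesis $[U_n]_1=0$ being deduced from $\wtDelta_{\tau}(U_n)=0$ because $\tau_*$ is injective on $K_0$ of a UHF algebra. Your sketch instead attempts to re-derive that proposition (splitting off the determinant, subdividing into short arcs) and stops exactly at its technical heart, the rank-independent Lipschitz contraction in $SU(M_k)$, which you explicitly leave ``to be supplied along the lines of \cite{KM}.'' Since the lemma is, by design, a direct adaptation of that proposition, citing it outright --- as the paper does --- is both legitimate and preferable to a partial re-derivation that omits the one genuinely hard step.
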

\bprf
Set $\partial E =\{ (s,t)\in[0,1]^2 ;\ \{s,t\}\cap \{0,1\} \neq \phi\}$. By Proposition 4.6 in \cite{KM}, for $c>0$, we obtain $c'>0$  satisfying that: 
for any AF-algebra $A$ and for any $z$ $\in U(C(\partial E)\otimes A)$ with $z(0,0)=1$, $\Lip (z) <c$, and $[z]_1=0 \in K_1(C(\partial (E))\otimes A)$, there exist $\wtz$ $\in U(C([0,1]^2)\otimes A)$ such that $\wtz|_{\partial E}=z$ and $\Lip(\wtz)< c'$. Suppose that $\wtu_n$ $\in U(C([0,1])\otimes B)$ satisfies the conditions in the lemma. Define $U_n\in$ $U(C(\partial E)\otimes B)$ by 
\begin{eqnarray}
U_n(s,t)=\left\{ \begin{array}{lll}
1,\quad & s=0, \\
\wtu_n(t), \quad  &s=1, \\
\exp(\log(\wtu_n(i))s), \quad &t=i,\ i=0,1. \\
\end{array} \right.\nonumber
\end{eqnarray} 
Then we have that $\Lip(U_n) < c$ for any $n\in \N$.
By the assumption, regarding $U_n$ $\in U(C(\T)\otimes B)$, we have that $[U_n]_1=$ $\wtDelta_{\tau} (U_n)=0 $ in $\tau(K_0(B))$.

Let $B_n$, $n\in \N$ be an increasing sequence of  matrix subalgebras of $B$ with $1_{B_n}=1_B$ and $\overline{\bigcup B_n}=B$.
Since $(U_n)_n$ $\in U((C(\partial E)\otimes B)_{\infty})$, slightly modyfying $U_n$, we obtain an increasing sequence $m_n\in \N$, $n\in \N$ and $U_n'\in$ $U(C(\partial E)\otimes (B_{m_n}'\cap B))$ such that $m_n\nearrow \infty$, $(U_n')_n=(U_n)_n$, $U_n'(0,0)=1$, and $\Lip(U_n') < c$.
Since $B_{m_n}'\cap B$ has the unique tracial state $\tau|_{B_{m_n}'\cap B}$, it follows that $[U_n']$ $_{K_1(B_{m_n}'\cap B)}$ $=$ $\wtDelta_{\tau_{B_{m_n}'\cap B}}(U_n')$ $=$ $\wtDelta_{\tau_B}(U_n')=$ $\wtDelta_{\tau_B}(U_n)=0$, then we obtain $\wtU_n$ $\in U(C([0,1]^2)\otimes(B_{m_n}'\cap B))$, $n\in \N$ such that $\wtU_n|_{\partial E} = U_n'$ and $\Lip (\wtU_n) < c'$.
Then we have that $(\wtU_n)_n$ $\in(C([0,1]^2)\otimes B)_{\infty}$.  Since $\wtU_n|_{\partial E} =U_n'$, $n\in\N$, slightly modyfying $\wtU_n$ on $\partial E$, we obtain $y_n$ $\in U(C([0,1]^2)\otimes B)$, $n\in \N$ and $\varepsilon >0$ such that $(y_n)_n= (\wtU_n)_n$, $y_n|_{\partial E} =U_n$, and $\Lip (y_n) < c'+\varepsilon$ for any $n\in \N$.
\eprf

 As in the proof of Proposition 2.2 in \cite{TW}, the embedding $I(k,k+1) \subset_{\rm unital} (A_{\infty})_{\alpha_{\infty}}$ obtained in Lemma \ref{Lem4.7} implies  the following Lemma.
\begin{lemma}\label{Lem5.2}
Let $A$ be a a unital separable projectionless $C^*$-algebra which has a unique tracial state and absorbs the Jiang-Su algebra tensorially. Suppose that $A$ has property (SI) and  $\alpha $ is an automorphism of $A$ with the weak Rohlin property. Then there exists a unital embedding of $\Zj$ into $(A_{\infty})_{\alpha_{\infty}}$.
\end{lemma}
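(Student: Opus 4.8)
The plan is to obtain the embedding by assembling the prime-dimension-drop embeddings furnished by Lemma~\ref{Lem4.7} along an inductive-limit decomposition of $\Zj$, exactly as Proposition~2.2 of \cite{TW} is proved. I would fix a presentation $\Zj = \varinjlim(Z_n,\rho_n)$ with $Z_n = I(k_n,k_n+1)$ prime dimension drop algebras and $\rho_n\colon Z_n\to Z_{n+1}$ unital injective connecting maps of the flexible ``standard'' form used in \cite{RW} (and in the original construction of \cite{JS}). Since $(A_\infty)_{\alpha_\infty}$ is a unital, hence complete, $C^*$-algebra, it suffices to produce unital $*$-homomorphisms $\psi_n\colon Z_n\to (A_\infty)_{\alpha_\infty}$, together with finite sets $F_n\subset Z_n$ with $\rho_n(F_n)\subset F_{n+1}$ and $\bigcup_n \rho_{\infty,n}(F_n)$ dense in $\Zj$, such that $\|\psi_{n+1}(\rho_n(x))-\psi_n(x)\|<2^{-n}$ for all $x\in F_n$. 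Then, for $x\in F_m$, the sequence $(\psi_n(\rho_{n,m}(x)))_{n\ge m}$ is Cauchy in $(A_\infty)_{\alpha_\infty}$, so $\rho_{\infty,m}(x)\mapsto\lim_n\psi_n(\rho_{n,m}(x))$ is well defined on a dense $*$-subalgebra of $\Zj$ and extends to the desired unital $*$-homomorphism $\Phi\colon\Zj\to(A_\infty)_{\alpha_\infty}$.

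I would build the $\psi_n$ by induction. For $n=1$, Lemma~\ref{Lem4.7} with $k=k_1$ gives a unital embedding $\psi_1\colon Z_1\hookrightarrow(A_\infty)_{\alpha_\infty}$. Given $\psi_n$, applying Lemma~\ref{Lem4.7} with $k=k_{n+1}$ produces a unital embedding $\iota\colon Z_{n+1}\hookrightarrow(A_\infty)_{\alpha_\infty}$, and then $\iota\circ\rho_n$ and $\psi_n$ are two unital $*$-homomorphisms $Z_n\to(A_\infty)_{\alpha_\infty}$. Using the semiprojectivity of the prime dimension drop algebras together with the large-multiplicity ``standard'' shape of $\rho_n$ in the chosen presentation, one would show --- this being precisely the argument in the proof of Proposition~2.2 of \cite{TW} --- that $\iota\circ\rho_n$ and $\psi_n$ are approximately unitarily equivalent; hence there is $u\in U((A_\infty)_{\alpha_\infty})$ with $\|u\,\iota(\rho_n(x))\,u^*-\psi_n(x)\|<2^{-n}$ for $x\in F_n$, and I would set $\psi_{n+1}=\Ad u\circ\iota$.

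I expect the one genuine difficulty to be exactly this compatibility step of the induction: passing from two \emph{a priori} unrelated unital copies of $Z_n$ inside $(A_\infty)_{\alpha_\infty}$ to ones agreeing, up to a small perturbation, along $\rho_n$. In a general target, unital $*$-homomorphisms out of a prime dimension drop algebra are \emph{not} approximately unitarily equivalent, so this step genuinely relies on the flexibility (large multiplicity, ``standard'' form) of the connecting maps in the Jiang--Su presentation of \cite{RW} together with semiprojectivity of the $Z_n$; I would import that part of the argument from the proof of Proposition~2.2 in \cite{TW}. Granting it, the rest --- the Cauchy/limit passage, which needs only completeness of $(A_\infty)_{\alpha_\infty}$ and so avoids any Elliott-type intertwining, and the verification that $\Phi$ is a well-defined unital $*$-homomorphism on all of $\Zj$ --- is routine.
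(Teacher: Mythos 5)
There is a genuine gap, and it sits exactly at the step you flag as the ``one genuine difficulty'': the claim that $\iota\circ\rho_n$ and $\psi_n$ are approximately unitarily equivalent as unital $*$-homomorphisms $Z_n=I(k_n,k_n+1)\to(A_{\infty})_{\alpha_{\infty}}$. This is false in general and cannot be imported from the proof of Proposition 2.2 in \cite{TW}, because that proof does not (and could not) contain such a uniqueness statement. Approximately unitarily equivalent maps induce the same map on traces of the target; the tracial datum of $\iota\circ\rho_n$ is determined by $\iota$ together with the eigenvalue pattern of $\rho_n$, whereas $\psi_n$, being produced by an unrelated application of Lemma \ref{Lem4.7}, carries whatever tracial datum it happens to have. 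Already for the target $M_{k(k+1)}$ (evaluations of $I(k,k+1)$ at two distinct interior points, which remain inequivalent finite-dimensional representations after composing with any standard connecting map), or for the target $\Zj$ itself (where unital $*$-homomorphisms out of $I(k,k+1)$ are distinguished by their induced measures on $[0,1]$), one sees that large multiplicity of $\rho_n$ does not repair the discrepancy. Semiprojectivity of the $Z_n$ is a lifting/existence tool and gives no uniqueness. So the inductive step, as architected, fails.

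The mechanism the paper's one-line citation actually points to is different and avoids uniqueness entirely: one exploits that the target is a central sequence algebra. By a reindexation argument, for every $k$ and every separable subset $S$ of $(A_{\infty})_{\alpha_{\infty}}$ one can rerun Lemma \ref{Lem4.7} so that the resulting copy of $I(k,k+1)$ lands in the relative commutant of $S$ (this uses that the proofs of Lemma \ref{Lem4.1} and Proposition \ref{Prop4.2} let one prescribe the finite sets against which the constructed elements approximately commute, and that everything can be kept $\alpha_{\infty}$-invariant). This yields pairwise commuting unital copies of the $I(k_n,k_n+1)$, hence a unital copy of $\bigotimes_n I(k_n,k_n+1)$ in $(A_{\infty})_{\alpha_{\infty}}$, and compatibility along the inductive system for $\Zj$ from \cite{JS}, \cite{RW} is then arranged by an existence argument (factoring the connecting maps through first-factor embeddings $x\mapsto x\otimes 1$ into such tensor products), not by conjugating two independently constructed embeddings into agreement. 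Your final Cauchy-sequence passage, using only completeness of $(A_{\infty})_{\alpha_{\infty}}$, is fine, and you are right that no Elliott-type two-sided intertwining is needed; but the compatibility step must be obtained by commutation/existence rather than by the approximate unitary equivalence your proposal rests on.
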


\begin{proof}[proof of Theorem \ref{Thms}] Let $\Zjk$, $B^{(i)}$, and $\Phi^{(i)}$, $i=0,1$, be the projectionless $C^*$-algebra, the UHF algebras, and the unital embeddings in the proof of Proposition \ref{Prop3.4}.

Because of $\Zj\otimes B^{(i)} \cong B^{(i)}$, the Rohlin property of $\sigma\otimes \id_{B^{(i)}} \in \Aut(\Zj\otimes B^{(i)})$, and 
\[ \Delta_{\tau_{\Zj\otimes B^{(i)}}}(u_n\otimes 1_{B^{(i)}}) =0 ,\ {\rm in }\ \R/\tau(K_0(B^{(i)})), \ n\in \N, \]
 applying Proposition \ref{Prop5.1}, we obtain ${V_n}^{(i)}\in U(\Zj \otimes B^{(i)})$, $i=0,1$, $n\in \N$ such that $({V_n}^{(i)})_n \in (\Zj \otimes B^{(i)})_{\infty}$ and 
\begin{eqnarray}
 & &({V_n}^{(i)} \sigma\otimes \id_{B^{(i)}} ({V_n}^{(i)})^*)_n = (u_n \otimes 1_{B^{(i)}})_n,\ i=0,1, \nonumber \\
 & & \tau_{\Zj\otimes B^{(i)}} \circ \log (V_n^{(i)} \sigma\otimes \id_{B^{(i)}}(V_n^{(i)})^* u_n^*\otimes 1_{B^{(i)}}) = 0, \ n\in \N. \nonumber 
\end{eqnarray}
By the following argument, we would like to obtain a path of unitaries $\wtv_n$ in $\Zj \otimes \Zjk$ with endpoints $\Phi^{(i)}(V_n^{(i)})$ $\in U( \Zj\otimes B^{(0)}\otimes B^{(1)})$, $i=0,1$ which satisfies $\wtv_n (t) \sigma\otimes\id_{B^{(0)}\otimes B^{(1)}}$ $(\wtv_n(t)^*)\approx $ $u_n\otimes 1_{B^{(0)}\otimes B^{(1)}}$ for any $t\in $ $[0,1]$.

Set 
\begin{eqnarray}
U_{n,1}&=& \Phi^{(0)} (V_n^{(0)})^* \Phi^{(1)}(V_n^{(1)}), \nonumber \\
W_n &=& U_{n,1}\sigma\otimes \id_{ B^{(0)}\otimes B^{(1)}} (U_{n,1})^*,\ n\in \N. \nonumber 
\end{eqnarray}
Then it follows that $(U_{n,1})_n$ $\in (\Zj \otimes B^{(0)}\otimes B^{(1)})_{\infty}$, $(W_n)_n=1_{(\Zj\otimes B^{(0)}\otimes B^{(1)})_{\infty}}$, and, by (1) in Proposition \ref{Lem5.1}, $\tau_{\Zj\otimes B^{(0)}\otimes B^{(1)}} \circ\log (W_n)$
\begin{eqnarray}
&=&\tau\circ\log (\Phi^{(1)}(V_n^{(1)}\sigma\otimes \id_{B^{(1)}}(V_n^{(1)})^* u_n^*\otimes 1_{ B^{(1)}}) \nonumber \\
& & \cdot\Phi^{(0)}(V_n^{(0)}\sigma\otimes \id_{B^{(0)}} (V_n^{(0)})^*u_n^*\otimes 1_{B^{(0)}})^*) \nonumber \\
&=&\sum_{i=0,1} (-1)^{1-i}\tau\circ\log (\Phi^{(i)} (V_n^{(i)} \sigma\otimes\id_{B^{(i)}} (V_n^{(i)})^*)u_n^*\otimes 1_{B^{(i)}})=0,  \nonumber 
\end{eqnarray}
for any $n\in \N$.

Since $(U_{n,1})_n \in (\Zj\otimes B^{(0)}\otimes B^{(1)})_{\infty}$, there exist $\tU_n \in$ $U(C([0,1])\otimes \Zj \otimes B^{(0)} \otimes B^{(1)})$, $n\in\N$ be such that $\tU_n(0)=1$, $\tU_n(1) =U_{n,1}$, $(\tU_n)_n \in$ $(C([0,1])\otimes \Zj \otimes B^{(0)}\otimes B^{(1)})_{\infty}$, and $\Lip (\tU_n) < \pi + \varepsilon$ for some $\varepsilon >0$. Define $\tT_n^{(j)} \in $ $U(C([0,1])\otimes \Zj \otimes B^{(0)} \otimes B^{(1)})$, $j$, $n\in \N$ by 
\[\tT_n^{(j)} = \tU_n \id_{C([0,1])}\otimes\sigma^j\otimes \id_{B^{(0)}\otimes B^{(1)}} (\tU_n )^*,  \] 
$\tT_n^{(0)} =1$. Note that 
\[ \tT_n^{(j)} \id \otimes \sigma\otimes\id (\tT_n^{(j-1)})^* = \tT_n^{(1)},\quad j, n\in\N.\]
By $(\tU_n)_n \in (C([0,1])\otimes \Zj \otimes B^{(0)}\otimes B^{(1)})_{\infty}$, $(W_n)_n=1$, $\tau \circ \log (W_n) =0$, and $\Lip(\wtU_n)< \pi +\varepsilon$, we have that $(\tT_n^{(j)})_n \in (C([0,1])\otimes\Zj\otimes B^{(0)}\otimes B^{(1)})_{\infty}$, $(\tT_n^{(j)}(1))_n=1$, $\tau\circ\log (\tT_n^{(j)}(1))=$ $j\tau\circ\log(W_n)$ $=0$, and $\Lip (\tT_n^{(j)})< 2(\pi +\varepsilon)$, $j\in \N$. Then, by Lemma \ref{Lem5.4}, we obtain a constant $c>0$ and $y_n^{(j)}\in U(C([0,1]^2)\otimes \Zj\otimes B^{(0)}\otimes B^{(1)})$, $j\in\N$ such that $(y_n^{(j)})_n\in(C([0,1]^2)\otimes \Zj\otimes B^{(0)}\otimes B^{(1)})_{\infty}$,
\[ y_n^{(j)}(0,t)=1, \quad y_n^{(j)}(1,t)=\tT_n^{(j)} (t),\ t\in [0,1]\]
\[ y_n^{(j)} (s,1)=\exp(\log(\tT_n^{(j)}(1)s),\quad y_n^{(j)}(s,0)=1,\ s\in [0,1], \]
and $\Lip (y_n^{(j)})< c$, $n\in\N$.

By the Rohlin property of $\sigma\otimes\id_{B^{(0)}\otimes B^{(1)}}$ we obtain $p_m^{(l)} \in$ $P(\Zj\otimes B^{(0)}\otimes B^{(1)})$ and $z_m^{(l)}\in$ $U(\Zj \otimes B^{(0)}\otimes B^{(1)})$, $l$, $m\in \N$ such that $(p_m^{(l)})_m \in$   $(\Zj\otimes B^{(0)}$ $\otimes B^{(1)})_{\infty}$, $(z_m^{(l)})_m=1$, and 
\[  \sum_{j=0}^{k^l-1} (\Ad z_m^{(l)}\circ \sigma \otimes \id)^j (p_m^{(l)}) =1.\]
Set $y_n^{(j)}(s)(t)$ $= y_n^{(j)}(s,t)$, $s, t\in[0,1]$, $j$, $n \in \N$,
\begin{eqnarray}
\tsigma_m^{(l)} &=& \id_{C([0,1])}\otimes (\Ad z_m^{(l)} \circ \sigma \otimes \id_{B^{(0)}\otimes B^{(1)}}),\quad   l,\ m\in \N, \nonumber \\
{\tW }_{l,m,n}' &=& \sum_{j=0}^{k^l -1} \tT_n^{(j)} \cdot (\tsigma_m^{(l)} )^{j-k^l}(y_n^{(k^l)} (j/k^l))^* \cdot (\tsigma_m^{(l)})^j (1_{C([0,1])}\otimes p_m^{(l)} ) .\nonumber 
\end{eqnarray}
Since  $((\tsigma_m^{(l)})^j$ $(1_{C([0,1])}\otimes p_m^{(l)}))_m$ $\in$ $(C([0,1])\otimes\Zj$ $\otimes B^{(0)}$ $\otimes B^{(1)})_{\infty}$, $j$ $=$ $0$,$1$,$...$,$k^l-1$ are mutually orthogonal projections, we have that $({\tW}_{l,m,n}')_m$ is a unitary and obtain $\tW_{l,m,n} \in U(C([0,1])\otimes \Zj \otimes B^{(0)}\otimes B^{(1)})$, $l$, $m$, $n \in \N$ such that $(\tW_{l,m,n})_m = ({\tW}_{l,m,n}')_m$.
By the definition of $\tW'_{l,m,n}$,  we have that 
\[ \|(\tW_{l,m,n})_m\cdot ( \id_{C([0,1])}\otimes \sigma\otimes\id (\tW_{l,m,n})^*)_m-({\tT}_n^{(1)})_m \| < c /k^l, \quad l,\ n\in \N. \]
Since $(\tT_n^{(j)})_n$, $((\tsigma_m^{(l)})^{j-k^l}(y_n^{(k^l)}(j/k^l)))_n$,
 and $(\tsigma_m^{(l)})^j(1_{C([0,1])}\otimes p_m^{(l)})_m$ $\in (C([0,1])$ $\otimes\Zj\otimes B^{(0)}\otimes B^{(1)})_{\infty}$, and $\|1-W_n\|\rightarrow 0$, we obtain a slow increasing sequence $l_n$, $n\in\N$ and a fast increasing sequence $m_n\in\N$, $n\in\N$ such that $l_n\nearrow \infty$, $m_n\nearrow \infty$, 
\[(\tW_{l_n,m_n,n})_n \in (C([0,1])\otimes\Zj\otimes B^{(0)}\otimes B^{(1)})_{\infty},\] 
\[ k^{2l_n}\|1-W_n\| \rightarrow 0,\quad \|\tW_{l_n,m_n,n} \id\otimes \sigma \otimes\id(\tW_{l_n,m_n,n})^* - \tT_n^{(1)} \| < c/k^{l_n}. \]

Set \[ \tV_n' = {\tW}_{l_n,m_n,n}^* \tU_n \in U(C([0,1])\otimes \Zj\otimes B^{(0)}\otimes B^{(1)}), \quad n\in\N. \]
Then it follows that $(\tV_n')_n \in C([0,1])\otimes \Zj\otimes B^{(0)}\otimes B^{(1)})_{\infty}$ and
\begin{eqnarray}
& & (\tV_n'  \id_{C([0,1])}\otimes \sigma \otimes \id_{B^{(0)}\otimes B^{(1)}} (\tV_n' )^*)_n \nonumber \\
& & =(\Ad {\tW}_{l_n,m_n,n}^* ({\tT}_n^{(1)}\cdot \id \otimes \sigma \otimes \id (\tW_{l_n,m_n,n})\cdot {\tW}_{l_n,m_n,n}^*))_n=1,\nonumber 
\end{eqnarray}
Since   $\|1- \tT_n^{(j)}(1)\| \leq j \| 1- W_n\|$ $\rightarrow 0$, and $\|1-y_n^{(k^{l_n})}(i/k^{l_n},1)\|$ $\leq \|1- \tT_n^{(k^{l_n})} (1)\|$, it follows that
\begin{eqnarray}
\tW_{l_n,m_n,n}'(1)  &=&  \sum_{j=0}^{k^{l_n} -1} \tT_n^{(j)} (1) (\tsigma_{m_n}^{(l_n)})^{j-k^{l_n}} (y_n^{(k^{l_n})}(j/k^{l_n},1))^* (\tsigma_{m_n}^{(l_n)})^j (p_{m_n}^{(l_n)}) \nonumber \\
&\approx_{\delta_n} & 1. \nonumber 
\end{eqnarray}
where $\delta_n =2k^{2 l_n} \| 1- W_n\|$, $n\in \N$, and then we have that 
\[(\tV_n' (1))_n = (U_{n,1})_n=(\Phi^{(0)}(V_n^{(0)})^* \Phi^{(1)}(V_n^{(1)}))_n. \]

Define  
\[ \tV_n (t) = \Phi^{(0)} (V_n^{(0)}) \tV_n' (t),\quad  t\in [0,1]. \]
Then we have that $(\tV_n)_n \in (C([0,1])\otimes \Zj\otimes B^{(0)}\otimes B^{(1)})_{\infty}$
\[(\tV_n(i))_n  = (\Phi^{(i)} (V_n^{(i)}))_n,\ i=0,1, \]
and that $(\tV_n \id_{C([0,1])}\otimes \sigma \otimes \id_{B^{(0)}\otimes B^{(1)}} (\tV_n)^*)_n$
\begin{eqnarray} 
 &=& (1_{C([0,1])}\otimes \Phi^{(0)}(V_n^{(0)}\sigma\otimes \id_{B^{(0)}} (V_n^{(0)})^*))_n \nonumber \\
 &=& (1_{C([0,1])}\otimes u_n \otimes 1_{B^{(0)}\otimes B^{(1)}} )_n. \nonumber
\end{eqnarray}
 Slightly modyfying $\tV_n$ at the end points, we obtain $\wtv_n\in $ $U(\Zj\otimes \Zjk)$ such that $(\wtv_n)_n= $ $(\tV_n)_n$,
\[ \wtv_n(i) = \Phi^{(i)}(V_n^{(i)}),\ i=0,1,\quad (\wtv_n \sigma \otimes \id_{\Zjk}(\wtv_n)^*)_n = ( u_n \otimes 1_{\Zjk})_n.\]  

Finally, we obtain $(v_n)_n$ $\in \Zj_{\infty}$ which corresponds to $\wtv_n$ $\in \Zj\otimes\Zj_k$ and satisfies $(v_n\sigma (v_n^*))_n =(u_n)_n$, by the following.
By Lemma \ref{Lem5.2} and $\Zjk \subset_{\rm unital} \Zj$, we obtain a unital embedding $\Psi: \Zj \otimes\Zjk$ $\hookrightarrow \Zj^{\infty}$ such that $\sigma_{\infty} \circ \Psi = \Psi \circ \sigma \otimes \id_{\Zjk}$ and $\Psi(a\otimes 1_{\Zjk})=$ $ a \in$ $ \Zj\subset$ $ \Zj^{\infty}$, $a\in \Zj$. 
Let $F_n\subset \Zj^1$, $n\in \N$ be an increasing sequence of finite subsets of $\Zj^1$ and $\varepsilon_n>0$, $n\in\N$ a decreasing sequence such that $\overline{\bigcup F_n} =\Zj^1$, $\varepsilon_n \searrow 0$, 
\[\| [\wtv_n, x\otimes 1_{\Zjk} ] \| < \varepsilon_n , \quad x\in F_n .\] 
\[\|\wtv_n \sigma\otimes \id_{\Zjk} (\wtv_n)^* -  u_n\otimes 1_{\Zjk}\| < \varepsilon_n, \]
It follows that $\| [ \Psi(\wtv_n), x] \|$ $=\|\Psi ([\wtv_n, x\otimes 1_{\Zjk}]) \|$ $<\varepsilon_n$, $x\in F_n$, and 
\begin{eqnarray} 
\Psi(\wtv_n) \sigma_{\infty}(\Psi(\wtv_n)^*)&=& \Psi(\wtv_n \sigma\otimes \id_{\Zjk}(\wtv_n)^*) \nonumber \\
&\approx_{\varepsilon_n} & \Psi ( u_n \otimes 1_{\Zjk}) =u_n. \nonumber 
\end{eqnarray}
Denote by $v_{n,p} \in U(\Zj)$, $p \in \N$ components of $\Psi(\wtv_n) \in$ $U(\Zj^{\infty})$, then we obtain an increasing sequence $p_n \in \N$, $n\in\N$ such that 
\[ (v_{n,p_n} \sigma(v_{n,p_n})^*)_n = (u_n)_n,\quad (v_{n,p_n})_n\in \Zj_{\infty}.\]
Define $v_n=v_{n,p_n}$. This completes the proof. 
\end{proof}
\begin{proof}[proof of Theorem \ref{Thm2}]
By using the stability of the following form instead of Proposition 4.3 \cite{Kis2} and by the Evans-Kishimoto intertwining argument in the proof of Theorem 5.1 \cite{Kis2} we can give the proof, where we omit the detail.
\begin{corollary}
Suppose that $\sigma \in \Aut (\Zj)$ has the weak Rohlin property. For any finite subset $F$ of $\Zj^1$ and $\varepsilon >0$, there exists a finite subset $G$ of $\Zj^1$ and $\delta >0$ satisfying that: for any $u\in U(\Zj)$ with $\|[u,y]\| <\delta$, $y\in G$, there exist $v \in U(\Zj)$ and $\lambda \in \T$ such that 
\[ \|v \sigma(v)^* -\lambda u \|<\varepsilon,\quad \|[v,x]\| < \varepsilon,\ x\in F.\]
\end{corollary}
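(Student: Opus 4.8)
The plan is to reduce the corollary to Theorem \ref{Thms} by a reindexation argument, after absorbing the generalized determinant into a scalar. First I would record the behaviour of $\wtDelta_{\tau_{\Zj}}$ on scalars: for $\lambda=\exp(2\pi\im\theta)\in\T$ the path $t\mapsto\exp(2\pi\im t\theta)1_{\Zj}$ runs from $1$ to $\lambda 1_{\Zj}$ and has $\wtDelta_{\tau_{\Zj}}$-value $\theta$, so $\Delta_{\tau_{\Zj}}(\lambda 1_{\Zj})=\theta+\Z$; here one uses $\tau_{\Zj}(K_0(\Zj))=\Z$ and the fact that $U(\Zj)=U_0(\Zj)$ (since $K_1(\Zj)=0$), so that $\Delta_{\tau_{\Zj}}$ is defined on all of $U(\Zj)$. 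As $\Delta_{\tau_{\Zj}}$ is a group homomorphism, it follows that for every $u\in U(\Zj)$ there is $\lambda\in\T$ with $\Delta_{\tau_{\Zj}}(\lambda u)=0$.

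Now I would argue by contradiction. If the statement fails, there are a finite set $F\subset\Zj^1$ and $\varepsilon>0$ such that, fixing an increasing sequence of finite sets $G_n\subset\Zj^1$ with $\overline{\bigcup_n G_n}=\Zj^1$, for each $n$ one can choose $u_n\in U(\Zj)$ with $\|[u_n,y]\|<1/n$ for all $y\in G_n$ but with no pair $(v,\lambda)\in U(\Zj)\times\T$ satisfying both $\|v\sigma(v)^*-\lambda u_n\|<\varepsilon$ and $\|[v,x]\|<\varepsilon$ for all $x\in F$. By the choice of $G_n$ and $1/n\to 0$ one has $(u_n)_n\in\Zj_{\infty}$. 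Pick $\lambda_n\in\T$ as above with $\Delta_{\tau_{\Zj}}(\lambda_n u_n)=0$; since each $\lambda_n$ is central, $(\lambda_n u_n)_n$ is again a central sequence of unitaries, with vanishing determinant at each level. Applying Theorem \ref{Thms} to $\sigma$ and the sequence $(\lambda_n u_n)_n$ produces $v_n\in U(\Zj)$ with $(v_n)_n\in\Zj_{\infty}$ and $(v_n\sigma(v_n)^*)_n=(\lambda_n u_n)_n$ in $\Zj_{\infty}$.

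Finally, $(v_n)_n\in\Zj_{\infty}$ gives $\|[v_n,x]\|\to 0$ for each of the finitely many $x\in F$, and $(v_n\sigma(v_n)^*)_n=(\lambda_n u_n)_n$ gives $\|v_n\sigma(v_n)^*-\lambda_n u_n\|\to 0$; hence for all large $n$ the pair $(v_n,\lambda_n)$ meets both requirements for $u_n$, contradicting the choice of $u_n$. I do not expect a genuine obstacle here: essentially all the work is already contained in Theorem \ref{Thms}, and the only points needing care are the determinant bookkeeping of the first paragraph and the observation that the sequence $(v_n)_n$ supplied by Theorem \ref{Thms} is indexed compatibly with $(u_n)_n$, which holds because Theorem \ref{Thms} is stated for an arbitrary prescribed central sequence.
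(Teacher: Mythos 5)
Your proposal is correct and follows essentially the same route as the paper: normalize the determinant by the scalar $\lambda_n=\exp(-2\pi\im\,\Delta_{\tau_{\Zj}}(u_n))$, apply Theorem \ref{Thms} to the central sequence $(\lambda_n u_n)_n$, and derive a contradiction from a hypothetical sequence of counterexamples. Your version merely spells out a little more of the bookkeeping (that $\Delta_{\tau_{\Zj}}$ is defined on all of $U(\Zj)$ and acts as expected on scalars) that the paper leaves implicit.
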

\bprf
For $u_n\in U(\Zj)$, $n\in\N$ with $(u_n)_n\in \Zj_{\infty}$, set $\lambda_n$ $=\exp(-2\pi \im$ $\Delta_{\tau_{\Zj}}(u_n))$ $\in\T$. Since $\Delta_{\tau_{\Zj}}(\lambda_n u_n)=0$ $\in \R/\tau(K_0(\Zj))$, by the above theorem we obtain $v_n \in U(\Zj)$, $n\in \N$ such that $(v_n)_n \in \Zj_{\infty}$ and 
\[(v_n \sigma(v_n)^*)_n = (\lambda_nu_n)_n.\]

Assume that there exist a finite subset $F$ of $\Zj^1$ and $\varepsilon>0$ satisfying that: For any finite subset $G$ of $\Zj^1$ and $\delta>0$ there exists $u\in U(\Zj)$ with $\|[u, y]\|< \delta$, $y\in G$   
 such that if $v\in U(\Zj)$ and $\lambda \in \T$ satisfy $\|v \sigma(v)^*-\lambda u\| < \varepsilon $ then $\|[v, x] \| \geq \varepsilon$ for some $x\in F$.
This contradicts  the above statement.
\end{proof}

{\bf Acknowledgments.}
The author would like to thank his advisor Akitaka Kishimoto
 for many valuable discussions, and he also would like to thank Hiroki Matui for the preprint \cite{Mat2} and useful advices.

\end{document}